%
%
\documentclass[
10pt
]{article}

%
%
\usepackage{color}
\definecolor{LinkColor}{rgb}{0,0,1}
\definecolor{lbcolor}{rgb}{0.85,0.85,0.85}
\definecolor{FrameColor}{rgb}{0.85,0.85,0.85}

%

%
%
\usepackage[ngerman, french, english]{babel}
\usepackage[utf8]{inputenc}
\usepackage[T1]{fontenc}
\usepackage{enumerate}
\usepackage{setspace}

\def\pskip{\\[-3mm]}
\def\bpskip{\\[-2mm]}
\usepackage{multicol}
\usepackage[babel,french=guillemets,german=swiss]{csquotes}

%
%
\usepackage{array}
\usepackage{booktabs}
\usepackage{framed}
\usepackage{rotating}
\usepackage{longtable}
\usepackage{multirow}
\usepackage{tabularx}
\newcolumntype{L}[1]{>{\raggedright\arraybackslash}p{#1}} 
\newcolumntype{C}[1]{>{\centering\arraybackslash}p{#1}} 
\newcolumntype{R}[1]{>{\raggedleft\arraybackslash}p{#1}} 

\setlength{\fboxrule}{1.5pt}

%
%
\usepackage{graphicx}


%
%
\usepackage{amsmath}
\usepackage{amssymb}
\usepackage{dsfont}
\usepackage{nicefrac}
\usepackage{empheq}
\allowdisplaybreaks

%
%
\usepackage[%
pdftitle={Titel},
pdfauthor={Autor},
pdfcreator={LaTeX, LaTeX with hyperref and KOMA-Script},
pdfsubject={Betreff}, 
pdfkeywords={Keywords}
]{hyperref} 

\hypersetup{colorlinks=true,
	linkcolor=LinkColor,%
	anchorcolor=LinkColor,%
	citecolor=LinkColor2,%
	filecolor=LinkColor,%
	menucolor=LinkColor,%
	pagecolor=LinkColor,%
	urlcolor=LinkColor,%
	allcolors=LinkColor}

%
%
\usepackage[savemem]{listings}
\lstloadlanguages{TeX}
\usepackage{paralist}
{\begin{list}{$\diamondsuit$}{}}%
	{\end{list}}

\lstset{language=[LaTeX]TeX,
	numbers=left,
	stepnumber=1,
	numbersep=5pt,
	numberstyle=\tiny,
	breaklines=true,
	breakautoindent=true,
	postbreak=\space,
	tabsize=2,
	basicstyle=\ttfamily\footnotesize,
	showspaces=false,
	showstringspaces=false,
	extendedchars=true,
	backgroundcolor=\color{lbcolor}}

%
%

\usepackage{amsthm}

\newtheoremstyle{tstyle}
{15pt}	
{5pt}	
{\itshape}	
{}	
{\bfseries}	
{.}	
{0.5em}	
{}	

\theoremstyle{tstyle}

\newtheorem{thm}{Theorem}
\newtheorem{lem}[thm]{Lemma}
\newtheorem{prop}[thm]{Proposition}
\newtheorem{cor}[thm]{Corollary}
\newtheorem{defn}[thm]{Definition}

\newtheoremstyle{cstyle}
{15pt}	
{5pt}	
{}	
{}	
{\bfseries}	
{}	
{0.2222em}	
{}	

\theoremstyle{cstyle}

\newtheorem*{com}{Comment}

\makeatletter
\g@addto@macro{\thm@space@setup}{\thm@headpunct{}}
\renewenvironment{proof}[1][\proofname]{\par
	\pushQED{\qed}%
	\normalfont \topsep6\p@\@plus6\p@\relax
	\trivlist
	\item[\hskip\labelsep
	\bfseries
	#1\@addpunct{\,}]\ignorespaces
}{%
	\popQED\endtrivlist\@endpefalse
}
\makeatother

\makeatletter
\g@addto@macro{\thm@space@setup}{\thm@headpunct{}}
\newenvironment{sketch-proof}[1][Sketch of the proof]{\par
	\pushQED{\qed}%
	\normalfont \topsep6\p@\@plus6\p@\relax
	\trivlist
	\item[\hskip\labelsep
	\bfseries
	#1\@addpunct{\,}]\ignorespaces
}{%
	\popQED\endtrivlist\@endpefalse
}
\makeatother

%
%

\def\RR{\mathbb R}
\def\VV{{\mathcal{V}}}
\def\NN{\mathbb N}

\def\BB{{\mathbb B_K}}

\def\IBB{\mathring{\mathbb B}_{K}}

\def\IBBB{\mathring{\mathbb B}_{2K}}

\def\eps{\varepsilon}
\def\supp{\textnormal{supp\,}}
\def\ee{\textnormal{e}}

\def\BR{{B_R(0)}}
\def\Br{{B_r(0)}}
\def\Bro{{B_{r_0}(0)}}
\def\BRZ{{B_{R_Z}(0)}}

\def\ddt{\frac{\mathrm d}{\mathrm dt}}

\def\dtau{\;\mathrm d\tau}
\def\dz{\;\mathrm dz}

\def\dy{\;\mathrm dy}
\def\dv{\;\mathrm dv}
\def\dt{\;\mathrm dt}
\def\ds{\;\mathrm ds}
\def\dtx{\;\mathrm d(t,x)}

\def\dtxv{\;\mathrm d(t,x,v)}
\def\dyw{\;\mathrm d(y,w)}

\def\B{{\bar B}}

\def\delzi{\partial_{z_i}}
\def\delzj{\partial_{z_j}}
\def\delxi{\partial_{x_i}}
\def\delxj{\partial_{x_j}}

\def\delvi{\partial_{v_i}}

\def\delt{\partial_{t}}
\def\delx{\partial_{x}}
\def\delv{\partial_{v}}
\def\delz{\partial_{z}}

\def\stimes{{\hspace{-0.01cm}\times\hspace{-0.01cm}}}
\def\scdot{{\hspace{1pt}\cdot\hspace{1pt}}}

\def\laplace{\Delta}

\def\MA{\mathbf A}
\def\MB{\mathbf B}
\def\MC{\mathbf C}

\def\Ma{\mathbf a}
\def\Mb{\mathbf b}

\def\Mf{\mathring{\mathbf f}}

\def\Mchi{\boldsymbol \chi}

\def\tand{\quad\text{and}\quad}
\def\twith{\quad\text{with}\quad}

\def\B*{{\bar B}}
\def\u*{{\bar u}}
\def\f*{f_{\u*}}
\def\g*{g_{\u*}}

\def\tf{{\tilde f}}

\def\tg{{\tilde g}}

\def\tZ{{\tilde Z}}
\def\tB{{\tilde B}}

\def\interior{{\textnormal{int\,}}}

\def\Lag{{\mathcal{L}}}

\def\wto{\rightharpoonup}

\def\itema{\item[\textnormal{(a)}]}
\def\itemb{\item[\textnormal{(b)}]}

\newcommand{\Underset}[3][0pt]{\ensuremath{\underset{\raise#1\hbox{\small\ensuremath{#2}}}{#3}}}

\begin{document}

\begin{center}	
	\LARGE{Optimal control of a Vlasov-Poisson plasma by an external magnetic field \\[2mm] Analysis of a \\tracking type optimal control problem}\\[5mm]
	\normalsize{P. Knopf}\\
	\textit{University of Bayreuth, 95440 Bayreuth, Germany}
	\texttt{Patrik.Knopf@uni-bayreuth.de}
\end{center}

\begin{abstract}
	In the paper \textit{Optimal control of a Vlasov-Poisson plasma by an external magnetic field - The basics for variational calculus} \cite{knopf} we have already introduced a set of admissible fields and we have proved that each of those fields induces a unique strong solution of the Vlasov-Poisson system. We have also established that the field-state operator that maps any admissible field onto its corresponding solution is continuous and weakly compact. In this paper we will show that this operator is also Fréchet differentiable and we will continue to analyze the optimal control problem that was introduced in \cite{knopf}. More precisely, we will establish necessary and sufficient conditions for local optimality and we will show that an optimal solution is unique under certain conditions.\\
	
	\textit{Keywords}: Vlasov-Poisson equation, optimal control, nonlinear partial differential equations, calculus of variations.
\end{abstract}

\section{Introduction}

This paper is a sequel of \textit{Optimal control of a Vlasov-Poisson plasma by an external magnetic field - The basics for variational calculus} \cite{knopf}. It is recommended to read this paper previously. However, we will briefly sketch the main results of \cite{knopf} here: We consider the three dimensional Vlasov-Poisson system in the plasma physical case that is equipped with an external field $B$:
\begin{equation}
\label{VP}
\left\{
\begin{aligned}
&\partial_t f + v\cdot \partial_x f - \partial_x \psi \cdot \partial_v f + (v\times B)\cdot \delv f= 0, \quad f\big\vert_{t=0}= \mathring f,\\
&\psi_f(t,x) = \int \frac{\rho_f(t,y)}{|x-y|} \dy, \quad 
\rho_f(t,x) = \int f(t,x,v)\ \mathrm dv.
\end{aligned}
\right.
\end{equation}
This system describes the time evolution of the distribution function\linebreak $f=f(t,x,v)\ge 0\; \big(x,v\in\RR^3, i.e., z=(x,v)\in\RR^6\big)$ of a plasma whose ions move under the influence of a self-consistent electric field $-\delx\psi_f=-\delx\psi_f(t,x)$. Thereby we assume that $\mathring f\in C^2_c(\RR^6;\RR_0^+)$ is a fixed initial datum. The external magnetic field $B$, that interacts with the particles via Lorentz force $(v\times B)$, acts as a control in this model.

In \cite{knopf} we have already introduced a set of fields that are suitable for our approach. For any final time $T>0$ and any exponent $\beta>3$ let $\VV$ denote the Banach space $L^2(0,T;W^{2,\beta}(\RR^3;\RR^3)) \cap L^2(0,T;H^1(\RR^3;\RR^3))$ and let $\|\cdot\|_\VV$ denote its standard norm. Then, for any radius $K>0$, the closed ball
$$\BB := \big\{ B\in\VV \;\big\vert\; \|B\|_\VV \le K \big\}$$
is referred to as the \textbf{set of admissible fields}. Note that $\BB \subset L^2\big(0,T;C^{1,\gamma}\big)$ where $C^{1,\gamma}$ denotes the Hölder space with exponent $\gamma = 1- \nicefrac 3 \beta$. We have proved that any admissible field $B\in \BB$ induces a unique strong solution 
$$f_B\in W^{1,2}\big(0,T;C_b(\RR^6)\big) \cap C\big([0,T];C^1_b(\RR^6)\big) \cap L^\infty\big(0,T;W^{2,\beta}(\RR^6) \big)$$ 
of the initial value problem \eqref{VP}, i.e., $f_B$ satisfies \eqref{VP} almost everywhere and for all $t\in[0,T]$, $\supp f_B(t)$ is contained in some ball $B_R(0)$ for some radius $R>0$ depending only on $\mathring f$, $T$, $K$ and $\beta$. Moreover, $f_B$ preserves the $p$-norm, i.e., for all $t\in[0,T]$ and any $1\le p\le \infty$ it holds that $\|f_B(t)\|_{L^p} = \|\mathring f\|_{L^p}$.\pskip

Now it was possible to define the \textbf{field-state operator}
$$ f.\,:\, \BB \to C\big([0,T];L^2(\RR^6)\big), \quad B\mapsto f_B\;. $$
We could show that there exist positive constants $C_1,C_2,C_3,L_1,L_2,L_3$ depending only on $\mathring f$, $T$, $K$ and $\beta$ such that for all $B,H\in\BB$ the corresponding solutions $f_B$ and $f_H$ satisfy
\begin{align}
\label{HCF}
\left\{
\begin{aligned}
&\|f_B - f_H\|_{C([0,T];C_b)} \le L_1\, \|B-H\|_\VV \,,&& \|\delz f_B\|_{C([0,T];C_b)} \le C_1\\
&\|\delz f_B - \delz f_H\|_{C([0,T];C_b)} \le L_2\, \|B-H\|_\VV^\gamma \,,&& \|\delt f_B\|_{C([0,T];C_b)} \le C_2\\
&\|\delt f_B - \delt f_H\|_{L^2(0,T;C_b)} \le L_3\, \|B-H\|_\VV^\gamma \,, && \|D_z^2 f_B\|_{C([0,T];C_b)} \le C_3.
\end{aligned}
\right.
\end{align}
where $\delz=\partial_{(x,v)}$ denotes the gradient in phase space. This means that the field-state operator is Lipschitz continuous. Moreover, we have already established the following result: Let $(B_k)\subset \BB$ be weakly convergent in $\VV$ with limit $B\in\BB$. Then
\begin{align*}
f_{B_k} \wto f_B \quad\text{in}\; W^{1,2}\big(0,T;L^2(\RR^6)\big), \quad k\to\infty
\end{align*}
which means weak compactness of the field-state operator as the set of admissible controls is weakly (sequentially) compact. Note that \cite[Prop.\,15]{knopf} provides even more similar compactness results but only the above will be used in the later approach.\pskip

With this knowledge we have started to analyze an optimal control problem that we will also consider in this paper. The aim is to control the time evolution of the distribution function in such a way that its value at time $T$ matches a desired distribution function $f_d\in C^2_c(\RR^6)$ as closely as possible. More precisely we want to find a magnetic field $B$ such that the $L^2$-difference $\|f_B(T)-f_d\|_{L^2}$ becomes as small as possible. Therefore, we intend to minimize the quadratic cost functional\vspace{-1mm}
\begin{align}
\label{OP}
J(B) = \frac 1 2 \|f_B(T)-f_d\|_{L^2(\RR^6)}^2 + \frac \lambda 2  \|D_x B\|_{L^2([0,T]\times\RR^3;\RR^{3\times 3})}^2
\end{align}
subject to $B\in\BB$. Here $\lambda$ is a nonnegative parameter. The field $B$ is the control in this model. Since $\|f(t)\|_{p} = \|\mathring f\|_{p}$ for all $1\le p\le \infty$, $t\in[0,T]$ it makes sense to assume that $\|f_d\|_{p} = \|\mathring f\|_{p}$ for all $1\le p\le \infty$ because otherwise the exact matching $f(T)=f_d$ would be impossible. \pskip

In \cite[Thm.\,16]{knopf} we have established that this optimization problem has at least one globally optimal solution. As the field-state operator is nonlinear there is no reason to assume that it is convex. Thus, this result does not provide uniqueness of this optimal solution. Of course, it is also possible that there are several locally optimal solutions. Therefore, in Section 4, we will analyze the optimization problem \eqref{OP} with respect to the following topics:
\begin{itemize}
	\item Necessary conditions of first order for local optimality,\vspace{-1mm}
	\item derivation of an optimality system,\vspace{-1mm}
	\item sufficient conditions of second order for local optimality,\vspace{-1mm}
	\item uniqueness of the optimal control under certain conditions.
\end{itemize}
The methods we are using are oriented towards the approach by F. Tröltzsch in \cite{troeltzsch}. For this procedure we will need Fréchet differentiability of the field-state operator that will be established in Section 3. As the Fréchet derivative is a linear approximation we will find out that it is given by a linear inhomogenous Vlasov equation. These general Vlasov equations will be analyzed in Section~2. \vspace{-7mm}

\section{A general inhomogenous linear Vlasov equation}


Let $r_0\ge 0$ and $r_2>r_1 \ge 0$ be arbitrary. We consider the following inhomogenous linear version of the Vlasov equation:
\begin{align}
\label{LVL}
\delt f + v\scdot\delx f + \MA\scdot\delv f + (v\stimes \MB )\scdot\delv f = \delx\psi_f\scdot\MC + \Mchi\,\Phi_{\Ma,f} + \Mb,\;\; 
f\big\vert_{t=0} = \Mf
\end{align}
The coefficients are supposed to have the following regularity
\begin{align}
&\label{RCa} \Ma=\Ma(t,x,v) \in C\big([0,T];C^1_b(\RR^6)\big),	\\
&\label{RCb} \Mb=\Mb(t,x,v) \in C\big([0,T];C^1_b(\RR^6)\big),	\\
&\label{RCf} \Mf=\Mf(x,v) \in C^2_c(\RR^6), \\
&\label{RCA} \MA=\MA(t,x)  \in C\big([0,T];C^{1,\gamma}(\RR^3;\RR^3)\big),	\\
&\label{RCB} \MB=\MB(t,x)  \in C\big([0,T];C^{1,\gamma}(\RR^3;\RR^3)\big),	\\
&\label{RCC} \MC=\MC(t,x,v) \in C\big(0,T;C^1_b(\RR^6;\RR^3)\big), \\
&\label{RCchi} \Mchi=\Mchi(x,v) \in C^1_c(\RR^6;[0,1])
\end{align}
with\vspace{-3mm}
\begin{gather}
\label{SC1} \supp \Ma(t),\; \supp \Mb(t),\; \supp\Mf,\; \supp \MC(t) \subset \Bro,\quad t\in[0,T],\\
\label{SC2} \Mchi = 1 \;\;\text{on}\;\; B_{r_1}(0),\quad \supp \Mchi  \subset B_{r_2}(0)
\end{gather}
Moreover $\Phi_{\Ma,f}$ is given by
\begin{align}
\label{DEFPHI}
\Phi_{\Ma,f}(t,x) := -\iint \frac{x-y}{|x-y|^3}\cdot\delv \Ma(t,y,w) \, f(t,y,w) \;\mathrm dw \mathrm dy 
\end{align}
for all $(t,x)\in[0,T]\times\RR^3$. We will also use the notation
\begin{align}
\label{DEFPHI2}
\Phi_{\Ma,f}'(t,x) := - \iint \frac{x-y}{|x-y|^3}\cdot\Big( \delv \Ma \, \delx f- \delv f \, \delx \Ma \Big)(t,y,w) \;\mathrm dw \mathrm dy\;.
\end{align}
for $(t,x)\in[0,T]\times\RR^3$. Note that
\begin{align*}
\Phi_{\Ma,f} = \sum_{i=1}^3 \delxi \psi_{\delvi \Ma f} \tand \big[\Phi_{\Ma,f}'\big]_j = \sum_{i=1}^3 \delxi \psi_{\delvi \Ma\,\delxj f - \delvi f\,\delxj \Ma},\quad j=1,2,3.
\end{align*}
As $\Ma \in C\big([0,T];C^1_b(\RR^6)\big)$ with compact support $\supp \Ma(t) \subset \Bro$ for all ${t\in[0,T]}$, \cite[Lem.\,2]{knopf} provides the following inequalities: For any $r>0$ there exists some constant $c>0$ that may depend only on $r$ and $r_0$ such that for almost all $t\in[0,T]$,
\begin{align}
&\label{ESTPHI1}	\|\Phi_{\Ma,f}(t)\|_{L^2(B_r(0))} \le c\, \|\delv a(t)\|_\infty\, \|f(t)\|_{L^2(\Bro)}, && f\in L^2(0,T;L^2),\\
&\label{ESTPHI2}	\|\Phi_{\Ma,f}'(t)\|_{L^2(B_r(0))} \le c\, \|\delz a(t)\|_\infty\, \|\delz f(t)\|_{L^2(\Bro)}, \hspace{-2mm}&& f\in L^2(0,T;H^1),\\
&\label{ESTPHI3}	\|\Phi_{\Ma,f}(t)\|_{L^\infty} \le c\, \|\delv a(t)\|_\infty\, \|f(t)\|_{L^\infty(\Bro)}, && f\in L^2(0,T;L^\infty),\\
&\label{ESTPHI4}	\|\Phi_{\Ma,f}'(t)\|_{L^\infty} \le c\, \|\delz a(t)\|_\infty\, \|\delz f(t)\|_{L^\infty(\Bro)}, && f\in L^2(0,T;W^{1,\infty}).
\end{align}
If $\Ma \in C\big([0,T];C^2_b(\RR^6)\big)$ and $f\in C\big([0,T];C^1_b(\RR^6)\big)$ then $\Phi_{\Ma,f}$ is continuously differentiable with respect to $x$ with
\begin{align*}
\delxj \Phi_{\Ma,f}(t,x) &= \sum_{i=1}^3 \delxj \delxi \psi_{\delvi \Ma f} 
= \sum_{i=1}^3 \delxi \psi_{\delvi \Ma \delxj f - \delxj \Ma \delvi f} = \big[\Phi_{\Ma,f}'\big]_j(t,x)
\end{align*}
for all $(t,x)\in [0,T]\times\RR^3$. Because of density this result holds true if \linebreak$\Ma \in C\big([0,T];C^1_b(\RR^6)\big)$. If merely $f\in L^2(0,T;H^1)$ the result holds true in the weak sense. 

\smallskip

%
%

\begin{lem}
	\label{LVLZ}
	\hypertarget{HLVLZ}
	Let $A,B\in C\big([0,T];C^1_b(\RR^3;\RR^3)\big)$ be arbitrary. For any ${t\in[0,T]}$ and $z\in\RR^6$ the characteristic system
	\begin{align*}
	\dot x = v, \quad
	\dot v = \MA(s,x)+v\times \MB(t,x)\;,
	\end{align*}
	has a unique solution $Z\in C^1([0,T]\times[0,T]\times\RR^6;\RR^6)$, $Z(s,t,z)=(X,V)(s,t,z)$ to the initial value condition $Z(t,t,z)=z$. For any $r>0$ and all $s,t\in [0,T]$,
	$$Z(s,t,B_r(0)) \subset B_{\zeta(r)}(0) \twith \zeta(r) :=  \ee^{2T} \big(r + \sqrt{T} \|\MA\|_{L^2(0,T;L^\infty)} \big) \;.$$
	Moreover, there exists some constant $C(r)>0$ depending only on $\|\MA\|_{L^2(0,T;C^1_b)}$, $\|\MB\|_{L^2(0,T;C^1_b)}$ and $r$ such that for all $s,t\in[0,T]$,
	\begin{align*}
	\|\delz Z(s,t,\cdot)\|_{L^\infty(B_r(0))} \le C(r) \tand \|\delt Z(s,t,\cdot)\|_{L^\infty(B_r(0))} \le C(r)\;.
	\end{align*}
\end{lem}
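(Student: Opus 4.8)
The plan is to treat the characteristic system as a parameter-dependent ODE in $\RR^6$ and to read off every assertion from classical Cauchy--Lipschitz theory together with two Gronwall estimates. Writing $z=(x,v)$ and $F(s,z):=\big(v,\ \MA(s,x)+v\times\MB(s,x)\big)$, the right-hand side is continuous in $s$ and, by \eqref{RCA}--\eqref{RCB}, continuously differentiable in $z$ for each fixed $s$, hence locally Lipschitz in $z$; note that $\delz F$ is \emph{not} globally bounded because the block $\delx\MA+v\times\delx\MB$ grows linearly in $|v|$. Nevertheless $F$ satisfies the linear growth bound $|F(s,z)|\le\|\MA(s)\|_\infty+(1+\|\MB(s)\|_\infty)\,|z|$. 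Local existence and uniqueness therefore follow from Picard--Lindelöf, and the linear growth together with Gronwall's inequality furnishes an a priori bound that excludes blow-up, so the maximal solution $Z(\cdot,t,z)$ exists on all of $[0,T]$. The asserted joint $C^1$-regularity of $(s,t,z)\mapsto Z(s,t,z)$ is the standard smooth-dependence-on-data result: $\delx[\,]$ aside, $\partial_sZ=F(s,Z)$ is continuous, $\delz Z$ solves the variational equation recorded below and is continuous, and $\delt Z$ will be expressed through these two via the flow identity, so all first partials are continuous.

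For the containment I would exploit that the Lorentz force is orthogonal to the velocity. Since $V\cdot(V\times\MB)=0$, the magnetic field does no work and $\tfrac{\d}{\d s}|V|^2=2\,V\cdot\MA$; consequently $\tfrac{\d}{\d s}|Z|^2=2\,X\cdot V+2\,V\cdot\MA\le|Z|^2+2|Z|\,\|\MA(s)\|_\infty$, and $\MB$ has dropped out entirely, which is precisely why $\zeta$ is independent of $\MB$. Writing $u=|Z(s,t,z)|$ this reads $\dot u\le\tfrac12 u+\|\MA(s)\|_\infty$, so Gronwall (in either time direction) together with Cauchy--Schwarz, $\int_0^T\|\MA(\sigma)\|_\infty\d\sigma\le\sqrt T\,\|\MA\|_{L^2(0,T;L^\infty)}$, yields $|Z(s,t,z)|\le\ee^{2T}\big(r+\sqrt T\,\|\MA\|_{L^2(0,T;L^\infty)}\big)$ whenever $|z|\le r$, which is the claimed inclusion $Z(s,t,B_r(0))\subset B_{\zeta(r)}(0)$.

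The derivative bounds follow the same pattern. The matrix $W(s):=\delz Z(s,t,z)$ solves the variational equation $\dot W=\delz F(s,Z(s,t,z))\,W$ with $W(t)=I$, whose Jacobian has the block entries $I$, $\delx\MA+V\times\delx\MB$ and $u\mapsto u\times\MB$. By \eqref{RCA}--\eqref{RCB} and the already-established bound on $|V|$ along characteristics issuing from $B_r(0)$, one gets $\|\delz F(s,Z(s))\|\le c\,\big(1+r+\sqrt T\|\MA\|\big)\big(\|\MA(s)\|_{C^1_b}+\|\MB(s)\|_{C^1_b}\big)$; integrating in time and again converting the resulting $L^1$-in-time integral into the $L^2(0,T;C^1_b)$-norms by Cauchy--Schwarz, Gronwall gives $\|\delz Z(s,t,\cdot)\|_{L^\infty(B_r(0))}\le C(r)$ with $C(r)$ depending only on the stated quantities. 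For the time derivative I would avoid differentiating the data directly and instead differentiate the cocycle identity $Z(s,\tau,Z(\tau,t,z))\equiv Z(s,t,z)$ in $\tau$ at $\tau=t$, which produces $\delt Z(s,t,z)=-\,\delz Z(s,t,z)\,F(t,z)$; since both factors are already controlled on $B_r(0)$, the bound $\|\delt Z(s,t,\cdot)\|_{L^\infty(B_r(0))}\le C(r)$ follows at once.

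The main obstacle is less any single estimate than the bookkeeping that forces every constant to depend only on $r$ and on the $L^2(0,T;C^1_b)$-norms of $\MA,\MB$. Two points require care: one must control $|V|$ \emph{first}, so that the term $v\times\MB$ --- genuinely superlinear before one knows $V$ is bounded --- becomes a bounded coefficient in the variational equation; and the $L^1$-in-time integrals appearing in the Gronwall exponents must be converted to the $L^2$-norms via Cauchy--Schwarz. The flow identity for $\delt Z$ is the clean way around the fact that the data are only continuous, not differentiable, in time, so that $\delt Z$ cannot be obtained by naively differentiating the equation with respect to the initial time $t$.
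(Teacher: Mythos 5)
Your proof is correct and is exactly the standard Cauchy--Lipschitz/Gronwall argument the paper has in mind: the paper omits the proof of this lemma, deferring to the analogous \cite[Lem.\,8]{knopf}, and nothing in your write-up deviates from that route (linear growth for global existence, the orthogonality $V\cdot(V\times\MB)=0$ to make $\zeta$ independent of $\MB$, the variational equation for $\delz Z$, and the flow identity $\delt Z=-\delz Z\,F(t,\cdot)$). The only cosmetic points are that your containment estimate actually yields the sharper factor $\ee^{T/2}$ in place of $\ee^{2T}$, and that the bound on $\delz F$ should carry an additive constant from the identity block $\partial_v v=I$; neither affects the conclusion.
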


The proof is simple and very similar to the proof of \cite[Lem.\,8]{knopf}. Therefore it will not be presented.\pskip

Now we can establish an existence and uniqueness result for classical solutions of the system \eqref{LVL} if the regularity conditions \eqref{RCa}-\eqref{RCchi} hold. Unfortunately the coefficients of the systems that will occur in this paper do not satisfy those strong conditions. However, we will still be able to prove an existence and uniqueness result for strong solutions of \eqref{LVL} if the regularity conditions are slightly weaker.

%
%

\begin{prop}
	\label{CSLVL}
	\hypertarget{HCSLVL}
	Suppose that the coefficients of the system \eqref{LVL} satisfy the regularity conditions \eqref{RCa}-\eqref{RCchi} and the support conditions \eqref{SC1},\eqref{SC2}. Then the initial value problem \eqref{LVL} has a unique classical solution ${f\in C^1([0,T]\times\RR^6)}$. Moreover for all $t\in[0,T]$, $\supp f(t) \subset B_{\zeta(r+1)}(0)$ with $r=\max\{r_0,r_2\}$ and $f$ is implicitely given by\vspace{-2mm}
	\begin{align}
	\label{EXPLVL}
	f(t,z) = \Mf\big( Z(0,t,z) \big) + \int\limits_0^t \big[\delx\psi_f\cdot\MC + \Mchi\Phi_{\Ma,f} + \Mb\big]\big(s,Z(s,t,z) \big)  \ds
	\end{align}
	for any $t\in[0,T],\;z\in\RR^6$.
	Moreover, there exists some constant $C>0$ depending only on $T$, $r_0$, $r_2$ and the standard norms of the coefficients such that $$\|f\|_{C^1_b([0,T]\times\RR^6)} \le C.$$ 
\end{prop}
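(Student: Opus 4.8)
The plan is to solve \eqref{LVL} by the method of characteristics, recast as a fixed point problem for the integral representation \eqref{EXPLVL}, and then bootstrap the regularity of the fixed point up to $C^1$. First I would invoke Lemma~\ref{LVLZ} (applicable since $C^{1,\gamma}\subset C^1_b$), which provides the $C^1$ characteristic flow $Z(s,t,z)$ together with its support--propagation bound via $\zeta$ and the derivative bounds on $\delz Z$ and $\delt Z$. Along these characteristics any classical solution of \eqref{LVL} satisfies $\dds f(s,Z(s,t,z)) = \big[\delx\psi_f\cdot\MC + \Mchi\Phi_{\Ma,f} + \Mb\big](s,Z(s,t,z))$; integrating from $0$ to $t$ and using $Z(t,t,z)=z$ gives exactly \eqref{EXPLVL}, and conversely any $C^1$ solution of \eqref{EXPLVL} solves \eqref{LVL}. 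Since $\psi_f$ (through $\rho_f$) and $\Phi_{\Ma,f}$ depend on $f$ linearly but nonlocally, \eqref{EXPLVL} is an affine fixed point equation $f=\mathcal{T}f$, and it suffices to produce a unique $C^1$ fixed point.

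The core of the argument is to run the Picard iteration $f_{n+1}=\mathcal{T}f_n$ starting from $f_0\equiv 0$ and to control it in $C\big([0,T];C^1_b(\RR^6)\big)$. The zeroth order term $\Mf(Z(0,t,\cdot))$ is $C^1$ and bounded by $\Mf\in C^2_c$ and the $C^1$ bounds on $Z$. For the integral term I would use the standard Newtonian potential estimate $\|\delx\psi_g\|_\infty\lesssim\|g\|_\infty$ (the $L^1$ factor being controlled by compact support) together with \eqref{ESTPHI3}, and — for the first spatial derivatives — the identity $\delxi\delxj\psi_{\rho_g}=\delxj\psi_{\delxi\rho_g}$, which moves one derivative onto the density so that $\|D_x^2\psi_g\|_\infty$ is controlled by $\|\delx g\|_\infty$, while \eqref{ESTPHI4} bounds $\|\Phi'_{\Ma,g}\|_\infty=\|\delx\Phi_{\Ma,g}\|_\infty$ by $\|\delz g\|_\infty$. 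Combined with the chain rule and the bounds on $\delz Z,\delt Z$, these yield $\|\mathcal{T}g(t)\|_{C^1_b}\le c_0 + c_1\int_0^t\|g(s)\|_{C^1_b}\ds$ and, for differences, $\|(\mathcal{T}g_1-\mathcal{T}g_2)(t)\|_{C^1_b}\le c_1\int_0^t\|(g_1-g_2)(s)\|_{C^1_b}\ds$. A Gronwall / Picard--Lindelöf argument then delivers a uniform bound on the iterates and convergence of $(f_n)$ in $C\big([0,T];C^1_b\big)$ to a unique fixed point $f$; the same Gronwall constant gives the asserted bound $\|f\|_{C^1_b([0,T]\times\RR^6)}\le C$ with $C$ depending only on $T$, $r_0$, $r_2$ and the norms of the coefficients.

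It then remains to promote $f$ to a genuine classical solution and to locate its support. Since $f\in C\big([0,T];C^1_b\big)$, the integrand in \eqref{EXPLVL} is continuous in $(s,t,z)$; differentiating \eqref{EXPLVL} in $t$ (using $\delt Z$ and the fundamental theorem of calculus for the upper limit) shows that $\delt f$ exists and is continuous, or equivalently one recovers \eqref{LVL} pointwise, expressing $\delt f = -v\scdot\delx f -(\MA+v\stimes\MB)\scdot\delv f + \big[\delx\psi_f\cdot\MC+\Mchi\Phi_{\Ma,f}+\Mb\big]$ as a continuous function, so that $f\in C^1([0,T]\times\RR^6)$. Uniqueness in the classical class is inherited from the fixed point uniqueness. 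For the support statement, if $f(t,z)\neq 0$ then by \eqref{EXPLVL} either $Z(0,t,z)\in\supp\Mf$ or $Z(s,t,z)\in\supp\MC(s)\cup\supp\Mchi\cup\supp\Mb\subset \Br$ with $r=\max\{r_0,r_2\}$ for some $s\in[0,t]$, since the nonlocal factors $\delx\psi_f$ and $\Phi_{\Ma,f}$ appear multiplied by the compactly supported $\MC$ and $\Mchi$. In every case the backward characteristic through $(t,z)$ meets $\Br$, and propagating forward with the bound of Lemma~\ref{LVLZ} yields $|z|\le\zeta(r)\le\zeta(r+1)$, hence $\supp f(t)\subset B_{\zeta(r+1)}(0)$.

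I expect the only genuine obstacle to be closing the iteration in $C^1$ rather than merely in $C^0$: naively differentiating $\delx\psi_f$ a second time produces a Calderón--Zygmund singular integral of $\rho_f$ that is not controlled by $\|f\|_\infty$ alone. The device that resolves this is precisely the integration-by-parts identity $\delxi\delxj\psi_{\rho_g}=\delxj\psi_{\delxi\rho_g}$ together with estimate \eqref{ESTPHI4} for $\Phi'_{\Ma,f}$, which let me express all first derivatives of the nonlocal terms through first derivatives of $f$ and thereby keep the a priori estimate self-consistent at the $C^1$ level; everything else reduces to routine Gronwall bookkeeping.
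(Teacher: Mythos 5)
Your proposal is correct and follows essentially the same route as the paper: Picard iteration on the integral representation \eqref{EXPLVL} along the characteristics of Lemma~\ref{LVLZ}, with the $C^1$ level closed exactly as you describe (shifting one derivative onto the density so that $D_x^2\psi_f$ and $\Phi'_{\Ma,f}$ are controlled by $\|\delz f\|_\infty$ via \eqref{ESTPHI4}), followed by Gronwall bookkeeping for the uniform bound and the support propagation via $\zeta$. The only cosmetic difference is that the paper proves uniqueness by an $L^2$ energy--Gronwall estimate on the difference of two solutions rather than by appealing to uniqueness of the fixed point of the integral equation; both reduce to the same Gronwall inequality.
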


\begin{com} 
	$\;$
	\begin{itemize}
		\itema	If we use a final value condition $f\big|_{t=T}=\Mf$ instead of the initial value condition $f\big|_{t=0}=\Mf$ the problem can be treated completely analogously. The results of Proposition~\ref{CSLVL} and Corollary~\ref{WSLVL} hold true in this case. Only the implicit depiction of a classical solution must be replaced by
		\begin{align}
		\label{EXPLVL2}
		f(t,z) = \Mf\big( Z(T,t,z) \big) - \int\limits_t^T \big[\delx\psi_f\cdot\MC + \Mchi\Phi_{\Ma,f} + \Mb\big]\big(s,Z(s,t,z) \big)  \ds
		\end{align}
		\itemb Suppose that $\MC=0$ and recall that $\Phi_{\Ma,f}$ depends only on $f\big|_{\Bro}$. Hence, if we choose $r_1 = \zeta(r_0)$ then for all $t\in [0,T]$ and $z\in \Bro$,
		\begin{align}
		\label{EXPLVL3}
		f(t,z) = \Mf\big( Z(0,t,z) \big) + \int\limits_0^t \big[\Phi_{\Ma,f} + \Mb\big]\big(s,Z(s,t,z) \big)  \ds
		\end{align}
		because in this case $\Mchi\big(Z(s,t,z)\big) = 1$ as $Z(s,t,\Bro) \subset B_{r_1}(0)$. This means that the values of $f\big\vert_\Bro$ do not depend on the choice of $\Mchi$ as long as \eqref{RCchi} and \eqref{SC2} hold.
	\end{itemize}
\end{com}

\begin{proof}[Proof of Proposition \ref{CSLVL}]
	Let $c>0$ denote a generic constant depending only on $r_0$, $r_2$, $T$ and the norms of the coefficients. For $t\in[0,T]$ and $z\in \RR^6$ let ${Z=(X,V)(s,t,z)}$ denote the solution of the characteristic system with $Z(t,t,z)=z$. Moreover, for $t\in[0,T]$ and $z\in\RR^6$, we define a recursive sequence by $f_0(t,z) := \Mf(z)$ and
	\begin{align*}
	f_{n+1}(t,z) &:= \Mf(Z(0,t,z)) + \int\limits_0^t \big[ \delx\psi_{f_n}\cdot\MC +\Mchi\Phi_{\Ma,f_n} + \Mb \big]\big(s,Z(s,t,z)\big)  \ds.
	\end{align*}
	By induction we can conclude that all $f_n$ are continuous. Then for any fixed $\tau\in[0,T]$ and $n\in\NN$ the functions $\Mf$, $\big[\delx\psi_{f_n}\cdot\MC\big](\tau)$, $\big[\Mchi\Phi_{\Ma,f_n}\big](\tau)$ and $\Mb(\tau)$ are continuous and compactly supported in $B_{r}(0)$ with $r=\max\{r_0,r_2\}$. This directly implies that $f_0(t)$ is compactly supported with $\supp f_0(t)\subset B_{r}(0)$ for all $t\in[0,T]$. Moreover, for any $\tau\in[0,T]$, Lemma \ref{LVLZ} implies that
	\begin{align*}
	\left.
	\begin{aligned}
	\supp \Mf(Z(s,t,\cdot)) &= Z(t,s,\supp \Mf)\\
	\supp \big[\delx\psi_f\cdot\MC](\tau,Z(s,t,\cdot)) &= Z\big(t,s,\supp \delx\psi_{f_n}\cdot\MC(\tau)\big)\\
	\supp \big[\Mchi\Phi_{\Ma,f_n}\big](\tau,Z(s,t,\cdot)) &= Z\big(t,s,\supp\Mchi\Phi_{\Ma,f_n}(\tau)\big)\\
	\supp \Mb\big(\tau,Z(s,t,\cdot)\big) &= Z\big(t,s,\supp \Mb(\tau)\big)
	\end{aligned}
	\right\} \; \subset \; B_{\zeta(r)}(0)\;.
	\end{align*}
	If we choose $\tau=s$ we can inductively deduce that $\supp f_n(t) \subset B_{\zeta(r)}(0)$ for all ${t\in[0,T]}$ and all $n\in\NN$. Finally, by another induction, ${f_n \in C^1(]0,T[\times\RR^6)}$ as the partial derivatives can be recursively described by:\vspace{-2mm}
	\begin{align*}
	&\delt f_0(t,z) = 0, \qquad \delzi f_0(t,z) = \delzi \Mf(z),\\
	&\delt f_{n+1}(t,z) = \delz \Mf(Z(0,t,z))\cdot \delt Z(0) + \delx\psi_{f_n}\cdot\MC(t,z) +\Mchi\Phi_{\Ma,f_n}(t,z) + \Mb(t,z) \\
	&\hspace{22mm}+ \int\limits_0^t \delz\big[ \delx\psi_{f_n}\cdot\MC +\Mchi\Phi_{\Ma,f_n} + \Mb \big]\big(s,Z(s,t,z)\big) \cdot \delt Z(s,t,z) \ds\\
	&\delzi f_{n+1}(t,z) = \delz \Mf(Z(0,t,z))\cdot \delzi Z(0,t,z) \\
	&\hspace{22mm} + \int\limits_0^t \delz\big[ \delx\psi_{f_n}\cdot\MC +\Mchi\Phi_{\Ma,f_n} + \Mb \big]\big(s,Z(s,t,z)\big) \cdot \delzi Z(s,t,z) \ds.
	\end{align*}
	where\vspace{-2mm}
	\begin{align*}
	&\delz\big[ \delx\psi_f\cdot\MC +\Mchi\Phi_{\Ma,f_n} + \Mb \big] \\
	&\quad = \begin{pmatrix} D_x^2\psi_{f_n}\, \MC + D_x\MC\, \delx\psi_{f_n} + \delx\Mchi\,\Phi_{\Ma,f_n} + \Mchi \, \Phi_{\Ma,f_n}' +\delx\Mb \\ D_v\MC\,\delx\psi_{f_n} + \delv\Mchi\,\Phi_{\Ma,f_n} +\delv\Mb \end{pmatrix}.
	\end{align*}
	Using Lemma \ref{LVLZ}, \eqref{ESTPHI3}, \eqref{ESTPHI4} and \cite[Lem.\,2]{knopf}, we obtain the following estimates by a straightforward computation:
	\begin{gather*}
	\|f_{1}(t)-f_0(t)\|_\infty\le c, \|\delt f_{1}(t)- \delt f_0(t)\|_\infty\le c,	\|\delzi f_{1}(t)- \delzi f_0(t)\|_\infty\le c,\\
	\begin{aligned}
	\|f_{n+1}(t)-f_n(t)\|_\infty &\le c \int\limits_0^t \| f_n(s) - f_{n-1}(s) \|_\infty \ds, \\
	\|\delzi f_{n+1}(t)- \delzi f_n(t)\|_\infty &\le c\int\limits_0^t \|f_n(s) - f_{n-1}(s)\|_{W^{1,\infty}}\ds, \\
	\|\delt f_{n+1}(t)- \delt f_n(t)\|_\infty &\le c \int\limits_0^t \Big[ \| \delt f_n(s) - \delt f_{n-1}(s)\|_\infty, \\[-3mm]
	&\qquad\qquad + \| f_n(s) - f_{n-1}(s) \|_{W^{1,\infty}} \Big] \ds.
	\end{aligned}
	\end{gather*}
	Hence there exists some constant $c_*>0$ such that for all $t\in[0,T]$,
	\begin{align*}
	M_{1,0}(t)\le c_* \quad\text{and}\quad M_{n+1,n}(t) \le c_* \int\limits_0^t M_{n,n-1}(s) \ds,\quad n\in\NN
	\end{align*}
	where $M_{m,n}(t)$ denotes the expression
	\begin{align*}
	\max\left\{ \|f_m(t)-f_n(t)\|_\infty,\|\delt f_m(t)-\delt f_n(t)\|_\infty,\|\delz f_m(t)- \delz f_n(t)\|_\infty \right\}
	\end{align*}
	for $m,n\in\NN_0$. Thus by induction,
	\begin{align*}
	M_{n+1,n}(t) \le c_*\frac{t^n}{n!} \le c_*\frac{T^n}{n!}, \quad t\in[0,T],n\in\NN
	\end{align*}
	and hence for $m,n\in\NN$ with $n<m$,
	\begin{align*}
	M_{m,n}(t) \le \sum_{j=n}^{m-1} M_{j+1,j}(t)  \le  \sum_{j=n}^\infty c_*\frac{T^j}{j!} \to 0,\quad n\to\infty\;.
	\end{align*}
	Consequently $(f_n)$ is a Cauchy-sequence in $C_b^1([0,T]\times\RR^6)$ and converges to some function $f\in C_b^1([0,T]\times\RR^6)$ because of completeness. Obviously, as the radius $\zeta(r)$ does not depend on $n$,
	$\supp f(t) \subset  \overline{B_{\zeta(r)}(0)} \subset B_{\zeta(r+1)}(0)$ for all $t\in[0,T]$
	and $f$ satisfies the equation
	\begin{align}
	\label{IMPRF}
	f(t,z) = \Mf(Z(0,t,z)) + \int\limits_0^t \big[ \delx\psi_f\cdot\MC + \Phi_{\Ma,f} +\Mb\big](s,Z(s,t,z))\ds\;.
	\end{align}
	One can easily show that $f$ is a classical solution of \eqref{LVL} by differentiating both sides of \eqref{IMPRF} with respect to $t$. 
	We will finally prove uniqueness by assuming that there exists another solution $\tilde f$ of the initial value problem and define $d:=f-\tilde f$. Then for any $t\in[0,T]$,
	\begin{align*}
	\|d(t)\|_{L^2}^2 = 2\int\limits_0^t\int \delx\psi_{d(s)}\cdot\MC(s)\;d(s) + \Mchi\Phi_{\Ma,d}(s)\; d(s)  \dz\mathrm ds \le c\int\limits_0^t \|d(s)\|_{L^2}^2 \ds
	\end{align*}
	and hence $\|d(t)\|_{L^2}=0$ for all $t\in[0,T]$ by Gronwall's lemma. This directly implies that $f=\tilde f$ which means uniqueness.
\end{proof}

%
%

\begin{defn}
	\label{DWSLVL}
	\hypertarget{HDWSLVL}
	We call $f$ a strong solution of the initial value problem \eqref{LVL} iff the following holds:
	\begin{itemize}
		\item [\textnormal{(i)}] $f\in H^1(]0,T[\times \RR^6) \subset C([0,T];L^2)$.
		\item [\textnormal{(ii)}] $f$ satisfies
		\begin{align*}
		\delt f  + v\cdot\delx f  + \MA\cdot\delv f + (v\times \MB)\cdot\delv f  = \delx\psi_f\cdot\MC + \Phi_{\Ma,f} + \Mb
		\end{align*}
		almost everywhere on $[0,T]\times\RR^6$.
		\item [\textnormal{(iii)}] $f$ satisfies the initial condition $f\big\vert_{t=0} = \Mf$ almost everywhere on $\RR^6$.
		\item [\textnormal{(iv)}] There exists some radius $r>0$ such that $\supp f(t) \subset B_r (0)$, $t\in [0,T]$.
	\end{itemize}
\end{defn}

%
%

\begin{cor}
	\label{WSLVL}
	\hypertarget{HWSLVL}
	We define $r:=\max\{r_0,r_2\}$ and let $C>0$ denote some constant depending only on $r_0,\,r_2$ and the norms of the coefficients.
	\begin{itemize}
		\item [\textnormal{(a)}] Suppose that $\MB  \in L^2(0,T;C^{1,\gamma}(\RR^3;\RR^3)\big)$, $\MC \in L^2\big(0,T;H^1\cap C_b(\RR^6;\RR^3)\big)$, \linebreak $\Mb \in L^2\big(0,T;C_b\cap H^1(\RR^6)\big)$ and $\Mf \in C^1_c(\RR^6)$.
		Moreover, we assume that the regularity conditions \eqref{RCa}, \eqref{RCA}, \eqref{RCchi} and the support conditions \eqref{SC1}, \eqref{SC2} hold.
		Then there exists a unique strong solution $f\in L^\infty\cap H^1(]0,T[\times\RR^6)$ of the initial value problem \eqref{LVL} such that
		\begin{align*}
		\|f\|_{L^\infty(]0,T[\times\RR^6)} + \|f\|_{H^1(]0,T[\times\RR^6)} \le C
		\end{align*}
		and $\supp f(t) \subset B_{\zeta(3+r)}(0)$ for almost all $t\in[0,T]$.
		\item [\textnormal{(b)}] Suppose that $\Mb=0$, $\MC=0$ and $\MB  \in L^2(0,T;C^{1,\gamma}(\RR^3;\RR^3)\big)$.
		Moreover, we assume that the regularity conditions \eqref{RCa}, \eqref{RCf}, \eqref{RCA}, \eqref{RCchi} and the support conditions \eqref{SC1}, \eqref{SC2} hold.
		There exists a unique strong solution \linebreak$f\in W^{1,2}(0,T;C_b) \cap C([0,T];C^1_b)$ of \eqref{LVL} such that
		\begin{align*}
		\|f\|_{L^\infty(]0,T[\times\RR^6)} + \|f\|_{H^1(]0,T[\times\RR^6)} \le C
		\end{align*}
		and $\supp f(t) \subset B_{\zeta(2+r)}(0)$ for almost all $t\in[0,T]$. If $r_1=\zeta(r_0)$, the values of $f\big\vert_{B_{r_0}(0)}$ do not depend on the choice of $\Mchi$ as long as \eqref{RCchi} and \eqref{SC2} hold. 
	\end{itemize}
\end{cor}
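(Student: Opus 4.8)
My plan is to construct the strong solution as a limit of the classical solutions supplied by Proposition~\ref{CSLVL}. Since the hypotheses here are weaker than \eqref{RCa}--\eqref{RCchi}, I would first regularise the deficient coefficients by mollification, keeping all supports under control. In case (a) the fields $\MB,\MC,\Mb$ and the datum $\Mf$ are not regular enough, so I would mollify $\MB$ in time, $\MC,\Mb$ in time and space, and $\Mf$ in space, producing sequences $\MB_k,\MC_k,\Mb_k,\Mf_k$ that satisfy \eqref{RCb}, \eqref{RCf}, \eqref{RCB}, \eqref{RCC}, that leave the already admissible $\Ma,\MA,\Mchi$ untouched, and that converge to the original fields in $L^2(0,T;C^{1,\gamma})$, $L^2(0,T;H^1\cap C_b)$, $L^2(0,T;C_b\cap H^1)$ and in $C^1_c$, respectively. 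In case (b) only $\MB$ is deficient, so I would mollify just $\MB$ in time while keeping $\Mb=\MC=0$ and the $C^2_c$ datum $\Mf$. Space mollification enlarges the supporting radii by at most one unit, which together with the $+1$ already present in Proposition~\ref{CSLVL} is absorbed by the generous radii $\zeta(3+r)$ and $\zeta(2+r)$ in the claim. For every $k$, Proposition~\ref{CSLVL} then yields a classical solution $f_k\in C^1([0,T]\times\RR^6)$ given by \eqref{EXPLVL} (respectively \eqref{EXPLVL3} in case (b)).

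Next I would establish $k$-uniform bounds on $f_k$. Mollification does not increase the pertinent norms, so Lemma~\ref{LVLZ} furnishes flow bounds for $Z_k$, and for $\delz Z_k$, that are uniform in $k$. Feeding these into the representation formula and estimating $\Phi_{\Ma,f_k}$ by \eqref{ESTPHI3} and $\delx\psi_{f_k}$ by \cite[Lem.\,2]{knopf}, a Gronwall argument with the time-integrable weights $\|\MC_k(\cdot)\|_\infty,\|\Mb_k(\cdot)\|_\infty$ gives a uniform $L^\infty$ bound. Differentiating the representation in $z$, the only delicate term is $\delx\Phi_{\Ma,f_k}=\Phi_{\Ma,f_k}'$, which \eqref{ESTPHI2} (respectively \eqref{ESTPHI4}) controls by $\|\delz f_k\|_{L^2}$ (respectively $\|\delz f_k\|_\infty$); Gronwall then bounds $\delz f_k$ uniformly, and the equation itself bounds $\delt f_k$. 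This delivers the uniform bound in $L^\infty\cap H^1$ for case (a) and in $W^{1,2}(0,T;C_b)\cap C([0,T];C^1_b)$ for case (b), hence the asserted estimate by $C$ and the support property (iv) for the limit.

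To pass to the limit I would show that $(f_k)$ is Cauchy, exploiting linearity. The difference $d_{kl}:=f_k-f_l$ solves \eqref{LVL} with magnetic field $\MB_k$, right-hand side $\delx\psi_{d_{kl}}\cdot\MC_k+\Mchi\Phi_{\Ma,d_{kl}}$, datum $\Mf_k-\Mf_l$, and an inhomogeneity $E_{kl}$ gathering the coefficient differences $(\MB_k-\MB_l)\cdot\delv f_l$, $(\MC_k-\MC_l)\,\delx\psi_{f_l}$ and $\Mb_k-\Mb_l$. Because the operator $v\cdot\delx+\MA\cdot\delv+(v\times\MB_k)\cdot\delv$ is divergence free in $(x,v)$ (here $\MA$ and $\MB_k$ do not depend on $v$), the basic energy identity together with \eqref{ESTPHI1} and \cite[Lem.\,2]{knopf} yields $\tfrac12\ddt\|d_{kl}(t)\|_{L^2}^2\le c\,\|d_{kl}(t)\|_{L^2}^2+\|d_{kl}(t)\|_{L^2}\,\|E_{kl}(t)\|_{L^2}$ with a time-integrable $c$; since the uniform bounds control $f_l$ and the approximations are Cauchy, $\|E_{kl}\|_{L^2}$ and $\|\Mf_k-\Mf_l\|_{L^2}$ tend to $0$, so Gronwall makes $(f_k)$ Cauchy in $C([0,T];L^2)$, and the analogous estimate for $\delz d_{kl}$, again via \eqref{ESTPHI2}/\eqref{ESTPHI4}, upgrades this to the full norms. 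The limit $f$ then inherits the bounds and support, satisfies the initial condition (iii) since $\Mf_k\to\Mf$, and satisfies the equation (ii) almost everywhere by passing to the limit in the representation, the nonlocal terms being handled by the continuity of $f\mapsto\Phi_{\Ma,f}$ and $f\mapsto\delx\psi_f$ from \cite[Lem.\,2]{knopf}. Uniqueness I would obtain exactly as in Proposition~\ref{CSLVL}: the difference of two strong solutions satisfies $\|d(t)\|_{L^2}^2\le c\int_0^t\|d(s)\|_{L^2}^2\ds$ and hence vanishes by Gronwall. In case (b), the independence of $f\big|_{B_{r_0}(0)}$ from $\Mchi$ when $r_1=\zeta(r_0)$ would pass to the limit from part (b) of the Comment following Proposition~\ref{CSLVL}.

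The hard part will be the uniform derivative estimate and the Cauchy step, precisely because of the nonlocal singular term: controlling $\delx\Phi_{\Ma,f_k}=\Phi_{\Ma,f_k}'$ is exactly what the special bounds \eqref{ESTPHI2} and \eqref{ESTPHI4} are designed for, and the Cauchy estimate cannot be run along characteristics — the regularised fields $\MB_k$ generate $k$-dependent flows — so it must be carried out by the energy method above, whose closure relies on the divergence-free structure of the transport operator.
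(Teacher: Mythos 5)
Your overall architecture (mollify the deficient coefficients, obtain classical solutions from Proposition~\ref{CSLVL}, prove $k$-uniform bounds via the representation formula and Gronwall, pass to the limit, prove uniqueness by the $L^2$-energy identity) matches the paper's, and your $L^2$-energy argument for the Cauchy property of $(f_k)$ in $C([0,T];L^2)$ is a legitimate --- arguably cleaner --- substitute for the paper's compactness route in part (a), where the paper instead extracts a weak $H^1$-limit by Banach--Alaoglu, upgrades to strong $L^2$-convergence by Rellich--Kondrachov, and recovers the initial condition via Mazur's lemma. The genuine gap is in your final step, the ``upgrade to the full norms''. Differentiating your difference equation in $z$ produces, inside $\delz E_{kl}$, the term $(v\stimes(\MB_k-\MB_l))\cdot\delv\delz f_l$ (and analogues with $\MC_k-\MC_l$), so closing a Gronwall estimate for $\delz d_{kl}$ requires a $k$-uniform bound on $D_z^2 f_l$. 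No such bound is available: in (a) the datum is only $C^1_c$ and $\MC$ only $L^2(0,T;H^1\cap C_b)$, and even in (b) the coefficient $\Ma$ is only $C^1_b$, so $\Phi'_{\Ma,f_l}$ cannot be differentiated once more with uniform control. In part (a) this is harmless --- strong $H^1$-convergence is not actually needed, since the uniform $H^1$-bound gives a weak limit and your $C([0,T];L^2)$-convergence identifies it --- but in part (b) the assertion is membership in $W^{1,2}(0,T;C_b)\cap C([0,T];C^1_b)$, for which strong convergence of $\delz f_k$ in the sup-norm is indispensable, and an $L^2$-based energy method cannot produce $L^\infty$-convergence in six space dimensions.

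Moreover, your closing claim that the Cauchy estimate ``cannot be run along characteristics'' is exactly backwards: that is how the paper closes part (b). One first compares the flows themselves, showing by Gronwall applied to the characteristic ODEs that $\|Z_k(s,t,\cdot)-Z_j(s,t,\cdot)\|_{L^\infty(B_\varrho(0))}\le c\,\|\MB_k-\MB_j\|_{L^2(0,T;L^\infty)}$ and $\|\delz Z_k-\delz Z_j\|_{L^\infty(B_\varrho(0))}\le c\,\|\MB_k-\MB_j\|_{L^2(0,T;C^1_b)}^{\gamma}$, the H\"older exponent entering through the $C^{1,\gamma}$-regularity of the fields. Differentiating the representation formula \eqref{EXPLVL} then charges the second derivative to the datum, via a term $\|D^2\Mf\|_\infty\,\|Z_k-Z_j\|_\infty\,\|\delz Z_k\|_\infty$, which is finite precisely because (b) assumes $\Mf\in C^2_c$; no second derivatives of $f_l$ ever appear. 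This flow-difference comparison is the missing ingredient; with it, $(f_k)$ becomes Cauchy in $W^{1,2}(0,T;C_b)\cap C([0,T];C^1_b)$, and the rest of your outline (limit equation, support, independence of $\Mchi$ by passing the Comment after Proposition~\ref{CSLVL} to the limit, uniqueness by Gronwall) goes through as you describe.
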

\begin{proof}
	To prove (a) we can choose $(\Mb_k) \subset C([0,T];C^1_b)$, $(\MB_k) \subset C([0,T];C^{1,\gamma})$, $(\MC_k)\subset C([0,T];C^1_b)$ and $(\Mf_k)\subset C^2_c(\RR^6)$ such that
	\begin{align*}
	&\Mb_k \to \Mb \text{ in } L^2\big( 0,T;C_b\cap H^1 \big),	&& \|\Mb_k\|_{L^2(0,T;H^1)} \le 2 \|\Mb\|_{L^2(0,T;H^1)}, \\
	& && \|\Mb_k\|_{L^2(0,T;C_b)} \le 2 \|\Mb\|_{L^2(0,T;C_b)}, \\
	&\Mf_k \to \Mf \text{ in } C^1_b(\RR^6),	&& \|\Mf_k\|_{C^1_b} \le 2\|\Mf\|_{C^1_b}\\
	&\MB_k \to \MB \text{ in } L^2\big( 0,T;C^{1,\gamma} \big),	&& \|\MB_k\|_{L^2(0,T;C^{1,\gamma})} \le 2\|\MB\|_{L^2(0,T;C^{1,\gamma})} \\
	&\MC_k \to \MC \text{ in } L^2\big( 0,T;C_b \cap H^1\big) ,	&& \|\MC_k\|_{L^2(0,T;H^1)} \le 2 \|\MC\|_{L^2(0,T;H^1)}, \\
	& && \|\MC_k\|_{L^2(0,T;C_b)} \le 2 \|\MC\|_{L^2(0,T;C_b)} 
	\end{align*}
	and for all $t\in[0,T]$, $\supp \Mb_k(t)$, $\supp \Mf_k$, and $\supp \MC(t) \subset B_{r_0+1}(0)$. Then, due to Proposition \ref{CSLVL}, for every $k\in\NN$ there exists a unique classical solution $f_k$ of \eqref{LVL} to the coefficients $\Ma$, $\Mb_k$, $\Mf_k$, $\MA$, $\MB_k$, $\MC_k$ and $\Mchi$. Moreover for all $t\in[0,T]$,
	\begin{align*}
	\supp f_k(t) \subset B_{\varrho}(0) \quad\text{with}\quad \varrho := \zeta(2 + \max\{r_0,r_2\}) = \zeta(2+r)\;.
	\end{align*}
	Now let $Z_k$ denote the solution of the characteristic system to $\MA$ and $\MB_k$ satisfying $Z_k(t,t,z)=z$ and let $c>0$ denote some generic constant depending only on $T$, $r_0$, $r_2$ and the norms of the coefficients. From Lemma \ref{LVLZ} we know that for any $r>0$ and all $s,t\in[0,T]$,
	\begin{align}
	\label{ESZK} 
	\|Z_k(s,t,\cdot)\|_{L^\infty(B_{r}(0))}< C(r) \tand \|\delz Z_k(s,t,\cdot)\|_{L^\infty(B_r(0))}< C(r)
	\end{align}
	where $C(r)>0$ depends only on $r$, $\|\MA\|_{L^2(0,T;C^1_b)}$ and $\|\MB\|_{L^2(0,T;C^1_b)}$. Then we can conclude from the implicit description \eqref{EXPLVL} that\vspace{-2mm}
	\begin{align*}
	|f_k(t,z)| &\le \|\Mf_k\|_\infty + \int\limits_0^t \|\delx\psi_{f_k}(s)\|_\infty \, \|\MC_k(s)\|_\infty + \|\Phi_{\Ma,f_k}(s)\|_\infty + \|\Mb_k(s)\|_\infty \ds\\
	&\le c + c\int\limits_0^t \|{f_k}(s)\|_\infty  \ds, \quad (t,z)\in[0,T]\times\RR^6.
	\end{align*}
	which yields $\|f_k(t)\|_{L^\infty} \le c$ by Gronwall's lemma. By differentiating \eqref{EXPLVL} and using \eqref{ESZK} the $z$-derivative can be bounded similarly by\vspace{-2mm}
	\begin{align*}
	&\|\delz f_k(t)\|_{L^2}^2 = \|\delz f_k(t)\|_{L^2(B_\varrho(0))}^2 \le c + c\; \int\limits_0^t \|\delz f_k(s)\|_{L^2}^2  \ds
	\end{align*}
	which implies that $\|\delz f_k(t)\|_{L^2}\le c$ for all $t\in[0,T]$. Finally one can easily show that $\|\delt f_k\|_{L^2(0,T;L^2)} \le c$ by expressing $\delt f_k$ by the Vlasov equation.
	Since all $f_k(t)$ are compactly supported in $B_\varrho(0)$ this yields 
	$$\|f_k\|_{L^\infty(]0,T[\times\RR^6)} + \|f_k\|_{H^1(]0,T[\times\RR^6)} \le c\;.$$
	Then, according to the Banach-Alaoglu theorem, there exists $f \in H^1(]0,T[\times\RR^6)$ such that $f_k \rightharpoonup f$ after extraction of a subsequence. Moreover there exists some function ${f^*\in L^\infty(]0,T[\times\RR^6)}$ such that $f_k \overset{*}{\wto} f^*$ up to a subsequence, i.e., a subsequence of $(f_k)$ converges to $f^*$ with respect to the weak-*-topology on $L^1(]0,T[\times\RR^6)^*$. Thus $f=f^*\in L^\infty(]0,T[\times\RR^6)$  $\cap\; H^1(]0,T[\times\RR^6)$. We will now show that $f$ is a strong solution of \eqref{LVL} by verifying the conditions of Definition~\ref{DWSLVL}.\pskip
	
	\textit{Condition} (i) is evident since ${f \in H^1(]0,T[\times\RR^6)}\subset W^{1,2}(0,T;L^2)$ which directly yields ${f\in C([0,T];L^2)}$ by Sobolev's embedding theorem.\pskip
	
	\textit{Condition} (iv) is also obvious because $\supp f_k \subset B_{\varrho}(0)$ for all $k\in\NN$, $t\in[0,T]$. The radius $\varrho$ does not depend on $k$ and satisfies $\varrho <\zeta(3+r)$.\pskip
	
	\textit{Condition} (ii): By Rellich-Kondrachov, $f_k\to f$ in $L^2([0,T]\times\RR^6)$ up to a subsequence. This implies that $\psi_{f_k} \to \psi_f$ and $\Phi_{\Ma,f_k}\to \Phi_{\Ma,f}$ in $L^2([0,T]\times\RR^3)$ and the assertion easily follows.\pskip
	
	\textit{Condition} (iii): Finally, according to Mazur's lemma, there exists some sequence $(\bar f_k)_{k\in\NN} \subset H^1(]0,T[\times\RR^6)$ such that $\bar f_k\to f$ in $H^1(]0,T[\times\RR^6)$ where for all $k\in\NN$, $\bar f_k$ is a convex combination of $f_1,...,f_k$. This means $\bar f_k(0) = \Mf$ and hence
	\begin{align*}
	\|f(0)-\Mf\|_{L^2} \le c\; \|f-\bar f_k\|_{W^{1,2}(0,T;L^2)} \le c\; \|f-\bar f_k\|_{H^1(]0,T[\times\RR^6)} \to 0,\quad k\to \infty\;.
	\end{align*}
	
	Consequently $f$ is a strong solution but we still have to prove uniqueness. We assume that there exists another strong solution $\tilde f$ and define $d:=f-\tilde f$. Then, by the fundamental theorem of calculus,
	\begin{align*}
	\|d(t)\|_{L^2}^2 
	= 2\int\limits_0^t\int \delx\psi_{d(s)}\cdot\MC(s)\;d(s) + \Mchi\Phi_{\Ma,d}(s)\; d(s)  \dz\mathrm ds
	\le c\int\limits_0^t \|d(s)\|_{L^2}^2 \ds
	\end{align*}
	for all $t\in[0,T]$. Hence $\|f(t)-\tilde f(t)\|_{L^2}^2 = \|d(t)\|_{L^2}^2 = 0$ for every $t\in[0,T]$ by Gronwall's lemma. This proves (a).\pskip
	
	To prove (b) we only have to approximate $\MB$. Therefore we choose some sequence $(\MB_k) \subset C([0,T];C^{1,\gamma})$ such that
	\begin{align*}
	\|\MB_k - \MB\|_{L^2(0,T;C^{1,\gamma})}\to 0,  \tand \|\MB_k\|_{L^2(0,T;C^{1,\gamma})} \le 2\|\MB\|_{L^2(0,T;C^{1,\gamma})}, \;\; k\in\NN.
	\end{align*}
	Then for any $k\in\NN$ there exists a unique classical solution $f_k$ of the system \eqref{LVL} to the coefficients $\Ma$, $\Mf$, $\MA$, $\MB_k$ and $\Mchi$ according to Proposition \ref{CSLVL}. Recall that for all $t\in [0,T]$, $\supp f_k(t) \subset B_{\varrho}(0)$ where $\varrho := \zeta(r+1)$ with $r = \max\{r_0,r_2\}$. Again, let $Z_k$ denote the solution of the characteristic system to $\MA$ and $\MB_k$ satisfying $Z_k(t,t,z)=z$ and in the following the letter $c$ denotes some generic positive constant depending only on $T$, $r_0$, $r_2$ and the norms of the coefficients. Now for all $s,t\in[0,T]$ (where $s\le t$ without loss of generality) and $z\in B_{\varrho}(0)$,$\color{white}{\Big|}$\vspace{-2mm}
	\begin{align*}
	|Z_k(s)-Z_j(s)| 
	&\le \int\limits_s^t c\;(1+\|D_x \MA(\tau)\|_\infty+\|D_x \MB_k(\tau)\|_\infty)\; |Z_k(\tau) - Z_j(\tau)| \dtau \\
	&\qquad+ c\int\limits_0^T \|\MB_k(\tau) - \MB_j(\tau)\|_\infty \dtau
	\end{align*}
	which implies that $\|Z_k(s,t,\cdot)-Z_j(s,t,\cdot)\|_{L^\infty(B_\rho(0))} \le c\;\|\MB_k-\MB_j\|_{L^2(0,T;L^\infty)} $. Similarly, for any $i\in\{ 1,...,6 \}$ the difference of the $i$-th derivative can be bounded by
	\begin{align*}
	&|\delzi Z_k(s)- \delzi Z_j(s)|\\
	&\le c\int\limits_s^t \Big[(1+\|\MA(\tau)\|_{C^{1,\gamma}}+\|\MB_k(\tau)\|_{C^{1,\gamma}})\;|\delzi Z_k(\tau)- \delzi Z_j(\tau)|  \\[-3mm]
	&\qquad\qquad + (1+\|\MA(\tau)\|_{C^{1,\gamma}}+\|\MB_k(\tau)\|_{C^{1,\gamma}})\;\|Z_k(\tau)- Z_j(\tau)\|_\infty^\gamma \\
	&\qquad\qquad  + \|\MB_k(\tau)-\MB_j(\tau)\|_{C^1_b} \,\Big] \dtau\\
	&\le \int\limits_s^t c\;(1+\|\MA(\tau)\|_{C^{1,\gamma}}+\|\MB_k(\tau)\|_{C^{1,\gamma}})\;|\delzi Z_k(\tau)- \delzi Z_j(\tau)| \dtau\\
	&\qquad  +c\;  \|\MB_k-\MB_j\|_{L^2(0,T;C^1_b)}^\gamma
	\end{align*}
	for all $s,t\in[0,T]$ and $z\in B_{\varrho}(0)$. Thus
	\begin{align*}
	\|\delz Z_k(s)- \delz Z_j(s)\|_{L^\infty(B_\varrho(0))} &\le  c\; \|\MB_k-\MB_j\|_{L^2(0,T;C^1_b)}^\gamma\,.
	\end{align*}
	Now for all $t\in[0,T]$, $z\in B_{\varrho}(0)$,
	\begin{align*}
	|f_k(t,z)-f_j(t,z)| &\le \|D\Mf\|_\infty \; |Z_k(0,t,z)-Z_j(0,t,z)| + c \int\limits_0^t \|f_k(\tau)-f_j(\tau)\|_\infty \mathrm d\tau \\
	&\le c\;\|\MB_k-\MB_j\|_{L^2(0,T;L^\infty)} + c \int\limits_0^t \|f_k(\tau)-f_j(\tau)\|_\infty \dtau
	\end{align*}
	and thus Gronwall's lemma implies that
	$$\|f_k-f_j\|_{L^\infty(0,T;L^\infty)} \le c\;\|\MB_k-\MB_j\|_{L^2(0,T;L^\infty)} $$
	Similarly, for all $t\in[0,T],z\in\Br$,
	\begin{align*}
	|\delz f_k(t,z)- \delz f_j(t,z)| \le c\|\MB_k-\MB_j\|_{L^2(0,T;C^1_b)}^\gamma + c \int\limits_0^t \|\delz f_k(\tau)- \delz f_j(\tau)\|_\infty \dtau
	\end{align*}
	and consequently $\|\delz f_k- \delz f_j\|_{L^\infty(0,T;L^\infty)} \le c\;\|\MB_k-\MB_j\|_{L^2(0,T;C^{1,\gamma})}^\gamma$. By expressing $\delt f_k$ and $\delt f_j$ by their corresponding Vlasov equation we can easily compute the estimate $\|\delt f_k- \delt f_j\|_{L^2(0,T;C_b)} \le c\;\|\MB_k-\MB_j\|_{L^2(0,T;C^{1,\gamma})}^\gamma$. \pskip
	
	This means that $(f_k)$ is a Cauchy sequence in $W^{1,2}(0,T;C_b)\cap C([0,T];C^1_b)$ and thus it converges to some function $f\in W^{1,2}(0,T;C_b)\cap C([0,T];C^1_b)$ because of completeness. Note that for all $t\in[0,T]$, $\supp f(t) \subset B_{\zeta(r+2)}$. From the strong convergence one can easily conclude that $f$ satisfies the system \eqref{LVL} almost everywhere and thus $f$ is a strong solution according to Definition \ref{DWSLVL}. \pskip
	
	Moreover, by the definition of convergence, we can find $k\in\NN$ such that 
	$\|f-f_k\|_{W^{1,2}(0,T;C_b)} + \|f-f_k\|_{C(0,T;C^1_b)} \le 1$
	and consequently
	\begin{align*}
	&\|f\|_{W^{1,2}(0,T;C_b)} + \|f\|_{C(0,T;C^1_b)} \le 1 + \|f_k\|_{W^{1,2}(0,T;C_b)} + \|f_k\|_{C(0,T;C^1_b)} \le c
	\end{align*}
	as the sequence $(f_k)$ is bounded in $C^1_b(]0,T[\times\RR^6)$ according to Proposition \ref{CSLVL} and $\MB_k$ is bounded by $\|\MB_k\|_{L^2(0,T;C^{1,\gamma})} \le 2\|\MB\|_{L^2(0,T;C^{1,\gamma})}$. \bpskip
	
	We will now assume that $r_1 = \zeta(r_0)$. As it has already been discussed in the comment to Proposition \ref{CSLVL} the values of $f_k\vert_\Bro$ do not depend on the choice of $\Mchi$ as long as \eqref{RCchi} and \eqref{SC2} hold. As $f_k\vert_\Bro$ converges to $f\vert_\Bro$ uniformely on $[0,T]\times\Bro$ this result holds true for $f\vert_\Bro$.
\end{proof}

\section{Fr\'echet differentiability of the field-state operator}

Again, let $K>0$ be arbitrary. We can now use the results of Section 5.1 to establish Fr\'echet differentiability of the control state operator on $\IBB$ (that is the interior of $\BB$).

\begin{thm}
	\label{FDCSO}
	Let $f.$ be the field-state operator as defined in \textnormal{\cite[Def.\,13]{knopf}}. For all \linebreak$B\in\BB$, $H\in \VV$ there exists a unique strong solution $f_B^H\in L^\infty\cap H^1(]0,T[\times\RR)$ $\subset C([0,T];L^2)$ of the initial value problem
	\begin{displaymath}
	\refstepcounter{equation}
	\label{FDEQ}
	\left\{
	\begin{aligned}
	&\delt f + v\cdot\delx f - \delx\psi_{f_B}\hspace{-1pt}\cdot\delv f - \delx\psi_f\hspace{-1pt}\cdot\delv f_B
	+ (v\stimes B)\hspace{-1pt}\cdot\delv f + (v\stimes H)\hspace{-1pt}\cdot\delv f_B = 0 \\[0.25cm]
	&f\big|_{t=0}=0 \hspace{272pt}\textnormal{(\theequation)}
	\end{aligned}
	\right.	
	\end{displaymath}
	with $\supp f(t) \subset B_\varrho(0)$ for all $t\in [0,T]$ and some radius $\varrho>0$ depending only on $T,K,\mathring f$ and $\beta$. Then the following holds:
	\begin{itemize}
		\item [\textnormal{(a)}] The field-state operator $f.$ is Fr\'echet differentiable on $\IBB$ with respect to the $C([0,T];L^2(\RR^6))$-norm, i.e., for any $B\in\IBB$ there exists a unique linear operator $f'_B: \VV\to C([0,T];L^2(\RR^6))$ such that
		\begin{gather*}
		\forall \eps>0\; \exists \delta > 0\; \forall H\in \VV \text{ with } \|H\|_{\VV} < \delta :\\[2mm]
		B+H \in \IBB\tand \frac{\| f_{B+H} - f_B - f'_B[H] \|_{C([0,T];L^2)}}{\|H\|_{\VV}} < \eps\,.
		\end{gather*}
		The Fr\'echet derivative is given by $f'_B[H]=f_B^H$ for all $H\in\VV$.
		\item [\textnormal{(b)}] For all $B,H\in\IBB$, the solution $f_B^H$ depends Hölder-continuously on $B$ in such a way that there exists some constant $C>0$ depending only on $\mathring f, T, K$ and $\beta$ such that for all $A,B\in\IBB$,
		\begin{align}
		\label{CFF}
		\underset{\|H\|_\VV \le 1}{\sup}\;\|f_A'[H] - f_B'[H]\|_{L^2(0,T;L^2)} \le C \;\|A-B\|_{\VV}^{\gamma}.
		\end{align}
	\end{itemize}
\end{thm}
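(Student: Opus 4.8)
The plan is to deduce the first assertion directly from Corollary \ref{WSLVL}(a) and then to handle both (a) and (b) by writing down the PDE satisfied by the relevant difference and running a single $L^2$-energy estimate of the type already used for uniqueness in Corollary \ref{WSLVL}. For the existence of $f_B^H$ I would match \eqref{FDEQ} to the general equation \eqref{LVL} by choosing $\Ma=0$ (so that $\Phi_{\Ma,f}\equiv 0$), $\MA=-\delx\psi_{f_B}$, $\MB=B$, $\MC=\delv f_B$, $\Mb=-(v\stimes H)\scdot\delv f_B$ and $\Mf=0$, with a cutoff $\Mchi$ adapted to the common compact support. The hypotheses of Corollary \ref{WSLVL}(a) then follow from \eqref{HCF}: the bounds on $\delz f_B$ and $D_z^2 f_B$ give $\delv f_B\in C([0,T];H^1\cap C_b)$ with compact support, hence $\MC$ and, together with $H\in\VV\subset L^2(0,T;C^{1,\gamma})$, also $\Mb\in L^2(0,T;C_b\cap H^1)$; that $\MA=-\delx\psi_{f_B}\in C([0,T];C^{1,\gamma})$ is the field regularity already established in \cite{knopf}. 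Corollary \ref{WSLVL}(a) then delivers the unique strong solution $f_B^H\in L^\infty\cap H^1\subset C([0,T];L^2)$ with the asserted support and norm bound.

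For (a) I would set $f'_B[H]:=f_B^H$. Linearity in $H$ is immediate from uniqueness and the joint linearity of \eqref{FDEQ} in $(f,H)$, and the a priori bound of Corollary \ref{WSLVL}(a) gives $\|f_B^H\|_{C([0,T];L^2)}\le c\,\|H\|_\VV$ because $\Mb$ is linear in $H$; so $f'_B$ is bounded. Since $B\in\IBB$, for $\|H\|_\VV$ small we have $B+H\in\BB$ and $f_{B+H}$ is defined; put $g:=f_{B+H}-f_B$ and $r:=g-f_B^H$. The key algebraic step is to subtract the two Vlasov--Poisson equations (using $\psi_{f_{B+H}}-\psi_{f_B}=\psi_g$) and then to subtract \eqref{FDEQ}: one finds that $r$ solves the linearized equation with field $B$ and state $f_B$, with zero initial datum and the purely quadratic source $S=\delx\psi_g\scdot\delv g-(v\stimes H)\scdot\delv g$. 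Testing with $r$ and using that the transport, magnetic and $\delx\psi_{f_B}$ terms are divergence free while $\int\delx\psi_r\scdot\delv f_B\,r\dz\le c\,\|r\|_{L^2}^2$, Gronwall yields $\|r\|_{C([0,T];L^2)}\le C\,\|S\|_{L^2(0,T;L^2)}$. Finally \eqref{HCF} bounds $S$: $\|\delx\psi_g\|_\infty\le c\,\|g\|_\infty\le c\,L_1\|H\|_\VV$ and $\|\delv g\|_\infty\le L_2\|H\|_\VV^\gamma$, while $\|H\|_{L^2(0,T;L^2)}\le c\,\|H\|_\VV$; hence $\|S\|_{L^2(0,T;L^2)}\le c\,\|H\|_\VV^{1+\gamma}$ and therefore $\|r\|_{C([0,T];L^2)}/\|H\|_\VV\le c\,\|H\|_\VV^{\gamma}\to 0$, which is the claimed differentiability.

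Part (b) proceeds along the same lines but is the delicate part. For $A,B\in\IBB$ I would set $d:=f_A^H-f_B^H$; subtracting the two copies of \eqref{FDEQ} and isolating the $d$-linear part shows that $d$ solves the linearized equation with field $A$ and state $f_A$, zero initial datum, and source
\[
S=\delx\psi_{f_A-f_B}\scdot\delv f_B^H+\delx\psi_{f_B^H}\scdot\delv(f_A-f_B)-(v\stimes(A-B))\scdot\delv f_B^H-(v\stimes H)\scdot\delv(f_A-f_B).
\]
The same energy estimate gives $\|d\|_{C([0,T];L^2)}\le C\,\|S\|_{L^2(0,T;L^2)}$, and hence $\|d\|_{L^2(0,T;L^2)}\le C\,\|S\|_{L^2(0,T;L^2)}$. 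The two terms carrying $\delv(f_A-f_B)$ are controlled by $\|\delv(f_A-f_B)\|_\infty\le L_2\|A-B\|_\VV^{\gamma}$ from \eqref{HCF}. The main obstacle is the two terms carrying $\delv f_B^H$: since $f_B^H$ lies only in $H^1$, $\delv f_B^H$ is a priori merely $L^2$ in space--time, while $A-B$ is only $L^2$ in time, so a naive pairing does not close. The remedy is that the proof of Corollary \ref{WSLVL}(a) in fact provides the uniform bound $\|\delz f_B^H(t)\|_{L^2}\le C$ for almost every $t$, uniformly for $\|H\|_\VV\le 1$ (the approximating classical solutions satisfy it, and it survives the weak-$*$ limit in $L^\infty(0,T;L^2)$); pairing this with $\int_0^T\|(A-B)(t)\|_\infty^2\dt\le c\,\|A-B\|_\VV^2$ and with $\|\delx\psi_{f_A-f_B}\|_\infty\le c\,\|A-B\|_\VV$ bounds both terms by $c\,\|A-B\|_\VV\le c\,\|A-B\|_\VV^{\gamma}$ on the bounded set $\IBB$. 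Taking the supremum over $\|H\|_\VV\le 1$ then gives \eqref{CFF}, the Hölder exponent $\gamma$ being inherited directly from the second estimate in \eqref{HCF}.
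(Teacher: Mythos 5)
Your construction of $f_B^H$ and your part (a) follow essentially the paper's own route: the paper likewise reads \eqref{FDEQ} as an instance of \eqref{LVL} covered by Corollary \ref{WSLVL}\,(a), writes $f_{B+H}-f_B=f_B^H+f_{\mathcal R}$ where $f_{\mathcal R}$ solves the linearized system with the quadratic source $\mathcal R=\delx\psi_{(f_{B+H}-f_B)}\cdot\delv(f_{B+H}-f_B)-(v\times H)\cdot\delv(f_{B+H}-f_B)$ (your $S$), and closes with the same $L^2$ energy estimate, Gronwall, and the bound $\|\mathcal R\|_{L^2(0,T;L^2)}\le C\|H\|_{\VV}^{1+\gamma}$ drawn from \eqref{HCF}; your explicit remarks on linearity and boundedness of $H\mapsto f_B^H$ are points the paper leaves implicit.

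For part (b) you take a genuinely different route. The paper approximates $A$, $B$, $H$ by fields in $C([0,T];W^{2,\beta})$, works with the resulting classical solutions through the characteristic representation \eqref{EXPLVL}, estimates $\|f_{A_k}^{H_k}-f_{B_k}^{H_k}\|_{L^2}$ via the differences of the flows $Z_{A_k}-Z_{B_k}$ and of $f_{A_k}-f_{B_k}$, and passes to the limit by Rellich--Kondrachov. You instead subtract the two linearized equations directly and rerun the $L^2$ energy estimate of part (a) on $d=f_A^H-f_B^H$; your source term $S$ is the correct one. The single nontrivial ingredient this requires --- and you correctly identify and justify it --- is the uniform bound $\esssup_t\|\delz f_B^H(t)\|_{L^2}\le C$ for $\|H\|_\VV\le 1$, which is not in the statement of Corollary \ref{WSLVL}\,(a) but is proved there for the approximating classical solutions and survives the limit by weak-$*$ lower semicontinuity in $L^\infty(0,T;L^2)$. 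With that in hand your estimates of the four source terms close (the $\delv(f_A-f_B)$ terms via \eqref{HCF}, the $\delv f_B^H$ terms by pairing the $L^2$-in-time factors $A-B$ and $\delx\psi_{f_A-f_B}$ against the uniform-in-time $L^2_z$ bound), and $\|A-B\|_\VV\le(2K)^{1-\gamma}\|A-B\|_\VV^{\gamma}$ on the bounded set $\BB$ yields \eqref{CFF}. What the paper's detour through characteristics buys is machinery it reuses later (e.g.\ in Theorem \ref{ADJS}); what your argument buys is that it avoids smoothing the fields and estimating flow differences altogether, at the price of having to extract the $L^\infty(0,T;H^1)$ bound from the proof rather than the statement of Corollary \ref{WSLVL}.
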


\begin{com}
	As $K>0$ was arbitrary the obove results hold true on $\IBBB$ instead of $\IBB$. Hence they are especially true for $B\in\BB$. 
\end{com}

\begin{proof} Let $C$ denote some generic positive constant depending only on $\mathring f$, $K$, $T$ and $\beta$. First note that the system \eqref{FDEQ} is of the type \eqref{LVL} since the coefficients of \eqref{FDEQ} satisfy the regularity and support conditions of Corollary~\ref{WSLVL}. Hence \eqref{FDEQ} has a strong solution $f_B^H\in L^\infty\cap H^1(]0,T[\times\RR^6)$. To prove Fr\'echet differentiability of the field-state operator we must consider the difference $f_{B+H}-f_B$ with $B\in\IBB$ and $H\in\VV$ such that $B+H\in\IBB$. Therefore we will assume that
	$\|H\|_\VV<\delta$ for some sufficiently small $\delta>0$.
	Now we expand the nonlinear terms in the Vlasov equation \eqref{VP} to pick out the linear parts. We have
	\begin{align*}
	&\delx\psi_{f_{B+H}}\cdot\delv f_{B+H} - \delx\psi_{f_{B}}\cdot\delv f_{B}\\
	&\quad = \delx\psi_{f_B}\cdot \delv(f_{B+H}-f_B) + \delx\psi_{(f_{B+H}-f_B)}\cdot\delv f_B + \mathcal R_1,\\[2mm]
	&\big(v\times (B+H)\big)\cdot\delv f_{B+H} - (v\times B)\cdot\delv f_{B} \\
	&\quad = (v\times B)\cdot\delv(f_{B+H}-f_B) + (v\times H)\cdot\delv f_B + \mathcal R_2
	\end{align*}
	where $\mathcal R_1 := \delx\psi_{(f_{B+H}-f_B)}\cdot\delv(f_{B+H}-f_B)$ and $\mathcal R_2 := (v\times H)\cdot\delv(f_{B+H}-f_B)$ are nonlinear remainders. Then $\mathcal R:=\mathcal R_1-\mathcal R_2$ lies in $L^2(0,T;H^1\cap C_b)$ and from \cite[Lem.\,2]{knopf} and \cite[Cor.\,14]{knopf} we can conclude that $\|\mathcal R\|_{L^2(0,T;L^2)} \le C \|H\|_{\VV}^{1 + \gamma}$. Obviously $f_{B+H}-f_B$ solves the initial value problem
	\begin{displaymath}	
	\refstepcounter{equation}
	\label{EQFR}
	\hspace{-2pt}\left\{
	\begin{aligned}
	&\hspace{-2pt}\delt f + v\cdot\delx f - \delx\psi_{f_B}\hspace{-1pt}\cdot\delv f - \delx\psi_f\hspace{-1pt}\cdot\delv f_B
	+ (v\stimes B)\hspace{-1pt}\cdot\delv f + (v\stimes H)\hspace{-1pt}\cdot\delv f_B \hspace{-1pt}= \mathcal R\\[0.25cm]
	&\hspace{-2pt}f\big|_{t=0}=0 \hspace{277pt}\textnormal{(\theequation)}
	\end{aligned}
	\hspace{-2pt}\right.
	\end{displaymath}
	almost everywhere on $[0,T]\times\RR^6$. From Corollary \ref{WSLVL}\,(a) we know that this solution is unique. Also according to Corollary \ref{WSLVL}\,(a) the system
	\begin{align}
	\label{EQFR2}
	\begin{cases}
	\delt f + v\cdot\delx f - \delx\psi_{f_B}\cdot\delv f - \delx\psi_f\cdot\delv f_B + (v\times B)\cdot\delv f = \mathcal R\;,\\[0.25cm]
	f\big|_{t=0}=0\;.
	\end{cases}
	\end{align}
	has a unique strong solution $f_{\mathcal R}$. Then $f_B^H+f_{\mathcal R}$ is a solution of \eqref{EQFR} due to linearity and thus $f_{B+H}-f_B = f_B^H + f_{\mathcal R}$ because of uniqueness. It holds that
	\begin{align*}
	\|f_{\mathcal R}(t)\|_{L^2}^2 &= 2 \int\limits_0^t \int f_{\mathcal R}(s)\; \delt f_{\mathcal R}(s) \dz\mathrm ds = 2 \int\limits_0^t \int f_{\mathcal R}\; \big( \delx\psi_{f_{\mathcal R}}\cdot\delv f_B  + \mathcal R \big) \dz\mathrm ds\\
	& \le C \int\limits_0^t   \|f_{\mathcal R}(s)\|_{L^2}^2 + \|f_{\mathcal R}(s)\|_{L^2}\;\|\mathcal R(s)\|_{L^2} \ds\;.
	\end{align*}
	Applying first the standard version and then the quadratic version of Gronwall's lemma (cf. Dragomir \cite[p.\,4]{dragomir}) yields
	\begin{align*}
	\|f_{\mathcal R}(t)\|_{L^2} \le C\;\|\mathcal R\|_{L^2(0,T;L^2)} \le C\;\|H\|_{\VV}^{1+\gamma}
	\end{align*}
	Let now $\eps>0$ be arbitrary. Then for all $t\in[0,T]$,
	\begin{align*}
	\frac{\| f_{B+H} - f_B - f_B^H \|_{C([0,T];L^2)}}{\|H\|_{\VV}} = \frac{\| f_{\mathcal R} \|_{C([0,T];L^2)}}{\|H\|_{\VV}}
	\le C\;\|H\|_{\VV}^{\gamma} < \eps
	\end{align*}
	if $\delta$ is sufficiently small. Hence assertion (a) is proved and the Fr\'echet derivative is determined by the system \eqref{FDEQ}. \pskip
	
	To prove (b) suppose that $A,B\in\IBB$ and $H\in\VV$ with $\|H\|_\VV\le 1$. Now, we choose sequences $(A_k),(B_k),(H_k)\subset C([0,T];W^{2,\beta})\subset C([0,T];C^{1,\gamma})$ such that $A_k\to A$, $B_k\to B$, $H_k\to H$ in $L^2(0,T;C^{1,\gamma})$ if $k$ tends to infinity. From Corollary \ref{WSLVL} (and its proof) we can conclude that
	\begin{gather*}
	\|f_{A_k}^{H_k}\|_{H^1(]0,T[\times\RR^6)} \le C \tand \|f_{B_k}^{H_k}\|_{H^1(]0,T[\times\RR^6)}  \le C,\\
	f_{A_k}^{H_k} \rightharpoonup f_A^H \tand f_{B_k}^{H_k} \rightharpoonup f_B^H \quad\text{in}\; H^1(]0,T[\times\RR^6)\,.
	\end{gather*}
	Since the $(x,v)$-supports of all occurring functions are contained in some ball $B_\varrho(0)$ whose radius $r$ depends only on $\mathring f$, $K$, $T$ and $\beta$ but not on $k$, we can apply the Rellich-Kondrachov theorem to obtain\vspace{-0.2cm}
	\begin{align*}
	f_{A_k}^{H_k} \to f_A^H \tand f_{B_k}^{H_k} \to f_B^H \quad\text{in}\; L^2([0,T]\times\RR^6)
	\end{align*}
	up to a subsequence. As $A_k,B_k$ and $H_k$ satisfy the regularity condition \eqref{RCB}, $f_{A_k}^{H_k}$ and $f_{B_k}^{H_k}$ are classical solutions and can be described implicitely by the representation formula \eqref{EXPLVL}. Note that \cite[Lem.\,8]{knopf} holds true for $\varrho$ instead of $R$. Hence for all $s,t\in[0,T]$,
	\begin{gather*}
	\|Z_{F}(s,t,\cdot)\|_{L^\infty(B_\varrho(0))} \le C, \quad \|\delz f_F(s)\|_\infty \le C, \quad \|D_z^2 f_F\|_{L^2(0,T;L^2)} \le C 
	\end{gather*}
	for all $F\in\big\{A_k,B_k \,\big\vert\, k\in\NN \big\}$. Also recall that we know from \cite[Lem.\,9]{knopf} (with $\varrho$ instead of $R$) that for all $s,t\in[0,T]$,
	\begin{gather*}
	\|f_{A_k}(s) - f_{B_k}(s)\|_{\infty} \le C\; \|A_k - B_k\|_{\VV}\;,\\[0.5mm]
	\|\delz f_{A_k}(s) - \delz f_{B_k}(s)\|_{\infty} \le C\; \|A_k - B_k\|_{\VV}^{\gamma}\;,\\
	\|Z_{A_k}(s,t,\cdot) - Z_{B_k}(s,t,\cdot)\|_{L^\infty(B_\varrho(0))} \le C\; \|A_k - B_k\|_{\VV}\;.
	\end{gather*}
	Then we can conclude from the implicit description \eqref{EXPLVL} that
	\begin{align*}
	&\|f_{A_k}^{H_k}(t) - f_{B_k}^{H_k}(t)\|_{L^2}\\
	&\le C\int\limits_0^t  \big( 1 + \|f_{A_k}(s)\|_{H^2} + \|H_k(s)\|_{W^{2,\beta}}\big)\, \|Z_{A_k}(s) - Z_{B_k}(s)\|_{L^\infty(B_\varrho(0))} \ds \\
	&\quad + C\int\limits_0^t \big( 1+ \|H_k(s)\|_{W^{2,\beta}} \big)\,\|\delv f_{A_k}(s) - \delv f_{B_k}(s) \|_{L^\infty} \ds \\
	&\quad + C\int\limits_0^t  \|f_{A_k}^{H_k}(s) - f_{B_k}^{H_k}(s)\|_{L^2} \ds
	\end{align*}
	and thus by Gronwall's lemma,\vspace{-1mm}
	\begin{align*}
	\|f_{A_k}^{H_k} - f_{B_k}^{H_k}\|_{L^2(0,T;L^2)} \le C\; \|f_{A_k}^{H_k} - f_{B_k}^{H_k}\|_{L^\infty(0,T;L^2)} \le C\; \|A_k - B_k\|_{\VV}^{\gamma}.
	\end{align*}
	If $k\to\infty$, we obtain $\|f_{A}^{H} - f_{B}^{H}\|_{L^2(0,T;L^2)} \le C\; \|A - B\|_{\VV}^{\gamma}$ that is (b). \end{proof}


\section{An optimal control problem with a tracking type cost functional}


We will now consider the model problem that was introduced in \cite[Sect.\,5]{knopf}. Let $\mathring f \in C^2_c(\RR^6)$ be any given initial datum and let $T>0$ be some fixed final time. Now, we want to find a control $B\in\BB$ such that the distribution function $f_B$ at time $T$ matches a desired distribution function $f_d\in C^2_c(\RR^6)$ as closely as possible. This is to be achieved by minimizing the $L^2$-difference $\|f_B(T)-f_d\|_{L^2}$. Therefore our optimization problem reads as follows:
\begin{align}
\label{OP1}
\begin{aligned}
&\text{Minimize} &&J(B) = \frac 1 2 \|f_B(T)-f_d\|_{L^2(\RR^6)}^2 + \frac \lambda 2  \|D_x B\|_{L^2([0,T]\times\RR^3;\RR^{3\times 3})}^2, \\
&\text{s.t.} && B\in\BB.
\end{aligned}
\end{align}
Here $\lambda$ is a nonnegative parameter and the field $B$ is the control in our model. Since $\|f(t)\|_{p} = \|\mathring f\|_{p}$ for all $1\le p\le \infty$, $t\in[0,T]$ it makes sense to assume that $\|f_d\|_{p} = \|\mathring f\|_{p}$ for all $1\le p\le \infty$ because otherwise the exact matching $f(T)=f_d$ would be impossible from the beginning. \pskip

In \cite[Thm.\,16]{knopf} we have already proved that this optimal control problem has at least one optimal solution. Since the control-state operator $f.$ is nonlinear we cannot expect $J$ to be convex. Of course the regularization term is strictly convex with respect to $B$ if $\lambda>0$ but if $\lambda$ is rather small (which makes sense in this model) there is no chance that this property can be transferred to $J$. Hence we can not conclude that there is only one globally optimal solution. Of course the optimal control problem may also have several locally optimal solutions. In the following subsection, these locally optimal solutions will be characterized by necessary optimality conditions of first order.

\subsection{Necessary conditions for local optimality}

A locally optimal solution is defined as follows:\vspace{-0.2cm}

\begin{defn}
	A control $\B*\in\BB$ is called a locally optimal solution of the optimization problem \eqref{OP1} iff there exists $\delta>0$ such that
	\begin{align*}
	J(\B*)\le J(B) \quad\text{for all}\quad B\in B_\delta(\B*) \cap \BB
	\end{align*}
	where $B_\delta(\B*)$ is the open ball in $\VV$ with radius $\delta$ and center~$\B*$.\pskip
\end{defn}

To establish necessary optimality conditions of first order we need Fréchet differentiability of the cost functional $J$.

\begin{lem}
	\label{FDJ-1}
	The cost functional $J$ is Fréchet differentiable on $\BB$ with Fréchet derivative
	\begin{align*}
	J'(B)[H] = \langle f_B(T)-f_d , f_B'(T)[H] \rangle_{L^2(\RR^6)} + \lambda \langle D_x B, D_x H \rangle_{L^2([0,T]\times\RR^3;\RR^{3\times 3})}
	\end{align*}
	for all $H\in \VV$. Let $\B*\in\BB$ be a locally optimal solution of the optimization problem \eqref{OP1}. Then
	\begin{align*}
	J'(\B*)[H] \begin{cases} =0, &\text{if}\; \B*\in\IBB \\ \ge 0, &\text{if}\; \B*\in\partial\BB \end{cases}, \quad H\in\BB \text{ with } \B*+H \in \BB.
	\end{align*}
\end{lem}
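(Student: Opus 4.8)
The plan is to prove the Fréchet differentiability of $J$ by the chain rule, combining the Fréchet differentiability of the field-state operator (Theorem \ref{FDCSO}) with the smoothness of the outer quadratic functional, and then to derive the variational inequality from the local optimality of $\B*$ together with the convexity of the admissible set $\BB$. The cost functional decomposes as $J(B) = \frac 1 2 \|f_B(T)-f_d\|_{L^2}^2 + \frac \lambda 2 \|D_x B\|_{L^2}^2 =: J_1(B) + J_2(B)$, so it suffices to differentiate each summand separately.

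For the first summand, I would write $J_1(B) = \tfrac12 \, g(f_B(T))$ where $g(\phi) = \|\phi - f_d\|_{L^2}^2$ is a smooth quadratic map on $L^2(\RR^6)$ with derivative $g'(\phi)[\eta] = 2\langle \phi - f_d, \eta\rangle_{L^2}$. The evaluation-at-$T$ map $\phi \mapsto \phi(T)$ is bounded linear from $C([0,T];L^2)$ into $L^2$, and by Theorem \ref{FDCSO} the operator $B\mapsto f_B$ is Fréchet differentiable from $\IBB$ into $C([0,T];L^2)$ with derivative $f_B'[H]=f_B^H$. Composing these via the chain rule gives the candidate derivative $\langle f_B(T)-f_d, f_B'(T)[H]\rangle_{L^2}$. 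To make the chain-rule step rigorous on all of $\BB$ (not just the interior), I would directly estimate the remainder: writing $f_{B+H}(T) - f_B(T) = f_B'(T)[H] + \omega$ where $\|\omega\|_{L^2} = o(\|H\|_\VV)$ by the differentiability statement, one expands the square and uses $\|f_B'(T)[H]\|_{L^2}\le C\|H\|_\VV$ (boundedness of the linear derivative) to absorb the quadratic terms into the $o(\|H\|_\VV)$ remainder. The second summand $J_2$ is a continuous quadratic form in $B$, hence smooth, with the elementary derivative $\lambda\langle D_x B, D_x H\rangle_{L^2}$. Summing yields the stated formula for $J'(B)[H]$.

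For the variational inequality, I would argue as follows. Let $\B*$ be locally optimal and let $H\in\VV$ be any admissible direction, meaning $\B*+H\in\BB$. Because $\BB$ is convex, the segment $\B* + \tau H$ lies in $\BB$ for all $\tau\in[0,1]$, and for small $\tau$ it also lies in $B_\delta(\B*)$, so local optimality gives $J(\B*+\tau H) \ge J(\B*)$. By Fréchet differentiability, $J(\B*+\tau H) - J(\B*) = \tau\, J'(\B*)[H] + o(\tau)$ as $\tau\to 0^+$; dividing by $\tau>0$ and letting $\tau\downarrow 0$ yields $J'(\B*)[H]\ge 0$ for every admissible $H$. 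This already establishes the case $\B*\in\partial\BB$. When $\B*\in\IBB$, for sufficiently small $\|H\|_\VV$ both $\B*+H$ and $\B*-H$ are admissible; applying the inequality to $+H$ and to $-H$ gives $J'(\B*)[H]\ge 0$ and $J'(\B*)[-H] = -J'(\B*)[H]\ge 0$, whence $J'(\B*)[H]=0$, and linearity extends this to all $H$.

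The main obstacle is the rigorous justification of the chain-rule step for $J_1$ in the tracking term, since differentiability of $B\mapsto f_B$ is only asserted with respect to the $C([0,T];L^2)$-norm while we evaluate at the single time $T$; the key point is that point evaluation at $T$ is continuous on $C([0,T];L^2)$, so the composition inherits differentiability without loss. A secondary technical care is that Theorem \ref{FDCSO} is stated on the open ball $\IBB$, but the comment following it extends the result to $\BB$ (indeed to $\IBBB\supset\BB$), so differentiability of $J$ genuinely holds on all of $\BB$ as claimed; I would invoke that comment explicitly to cover boundary controls $\B*\in\partial\BB$.
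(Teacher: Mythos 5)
Your proposal is correct and follows essentially the same route as the paper: Fréchet differentiability of $J$ via the chain rule applied to the field-state operator composed with the (smooth, quadratic) outer functional, and the variational inequality via the one-sided directional derivative of $\tau\mapsto J(\B*+\tau H)$ along admissible directions, with the $\pm H$ argument in the interior case. The paper's own proof is just a terser version of this; your added care about the remainder estimate, the continuity of evaluation at $T$, and the extension from $\IBB$ to $\BB$ via the comment after Theorem \ref{FDCSO} fills in details the paper leaves implicit.
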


\begin{proof} As the control-state operator is Fréchet differentiable on $\BB$ so is the cost functional $J$ by chain rule. Thus, the function $[0,1]\ni t\mapsto J(\B*+tH)\in\RR$ is differentiable with respect to $t$ and since $\B*$ is also a local minimizer of this function, we have
	\begin{align*}
	0 &\le \ddt J(\B* + tH)\big\vert_{t=0} = \left(J'(\B*+tH)\left[\ddt(\B*+tH)\right]\right)\Big\vert_{t=0} = J'(\B*)[H]
	\end{align*}
	for any $H\in\BB$ with $B+H\in\BB$. If $\B*$ is an inner point of $\BB$ this line even holds with "$=$" instead of "$\le$".
\end{proof}

If we consider $\BB$ as a subset of $L^2([0,T]\times \RR^3;\RR^3)$ it might be possible to find an adjoint operator $\big(f_B'(T)\big)^*\hspace{-3pt}: C([0,T];L^2) \to L^2([0,T]\times \RR^3;\RR^3)$ of $f_B'(T)$. Then, by integration by parts,\vspace{-2mm}
\begin{align*}
J'(B)[H] &= \langle f_B(T)-f_d , f_B'(T)[H] \rangle_{L^2(\RR^6)} + \lambda \sum_{i=1}^3 \langle \delxi B, \delxi H \rangle_{L^2([0,T]\times\RR^3;\RR^3)} \\
& = \langle \big(f_B'(T)\big)^*[f_B(T)-f_d]  - \lambda\;  \laplace_x B,H \rangle_{L^2([0,T]\times\RR^3;\RR^3)}
\end{align*}
for all $H\in\VV$. This means that the derivative $J'$ would have the explicit description $J'(B) = \big(f_B'(T)\big)^*[f_B(T)-f_d]  - \lambda\; \laplace_x B$. If now $\B* \in \interior \BB$ were a locally optimal solution it would satisfy the semilinear Poisson equation
\begin{align*}
\label{ADJOP}
-\laplace_x B = -\frac 1 \lambda \big(f_B'(T)\big)^*[f_B(T)-f_d] \;.
\end{align*}
In general such an adjoint operator is not uniquely determined. This means that we cannot deduce uniqueness of our optimal solution. A common technique to find an adjoint operator is the \textbf{Lagrangian technique}. For $B\in\BB$ and $f,g\in H^1(]0,T[\times\RR^6)$ with $\supp f(t)\subset\BR$ for all $t\in[0,T]$ we define
\begin{align*}
\Lag(f,B,g) &:= \frac 1 2 \|f(T)-f_d\|_{L^2} + \frac \lambda 2  \|D_x B\|_{L^2}^2 \\
&\quad - \int\limits_{[0,T]\times \RR^6} \big( \delt f +v\cdot \delx f - \delx\psi_f\cdot\delv f + (v\times B)\cdot\delv f \big)\; g \dtxv\;.
\end{align*}
$\Lag$ is called the \textbf{Lagrangian}. Obviously, by integration by parts,
\begin{align*}
\Lag(f,B,g)
&= \frac 1 2 \|f(T)-f_d\|_{L^2} + \frac \lambda 2 \|D_x B\|_{L^2}^2 + \langle g(0),f(0) \rangle_{L^2} - \langle g(T),f(T) \rangle_{L^2}\\
&\quad + \int\limits_{[0,T]\times \RR^6} \big( \delt g +v\cdot \delx g - \delx\psi_f\cdot\delv g + (v\times B)\cdot\delv g \big)\; f \dtxv\;.
\end{align*}
In the definition of the Lagrangian $f$, $B$ and $g$ are independent functions. However, inserting $f=f_B$ yields
\begin{align}
J(B) = \Lag(f_B,B,g), \quad B\in\BB,\; g\in H^1(]0,T[\times\RR^6)\;.
\end{align}
It is important that this equality does not depend on the choice of $g$. Since $\Lag$ is Fréchet differentiable with respect to $f$ in the $H^1(]0,T[\times\RR^6)$-sense and with respect to $B$ in the $\VV$-sense we can use this fact to compute the derivative of $J$ alternatively. By chain rule,
\begin{align}
\label{DJLAG}
J'(B)[H] = \big(\partial_f \Lag\big)(f_B,B,g)\big[f_B'[H]\big] + \big(\partial_B \Lag\big)(f_B,B,g)[H]
\end{align}
for all $B\in\BB$, $H\in\VV$ and any $g\in H^1(]0,T[\times\RR^6)$. Here $\partial_f \Lag$ and $\partial_B \Lag$ denote the partial Fréchet derivative of $\Lag$ with respect to $f$ and $B$. We will now fix $f,g$ and $B$. Then
\begin{align}
\label{DFLAG}
(\partial_f \Lag)(f,B,g)[h]
%
%
&= \langle f(T)-f_d, h(T) \rangle_{L^2} - \langle g(T) , h(T) \rangle_{L^2} + \langle g(0), h(0) \rangle_{L^2} \notag\\
&\; + \hspace{-4pt} \int\limits_{[0,T]\times \RR^6} \hspace{-8pt} \big( \delt g +v\cdot \delx g - \delx\psi_f\cdot\delv g + (v\times B)\cdot\delv g \big)\; h \dtxv \notag\\
&\; - \hspace{-4pt} \int\limits_{[0,T]\times \RR^6} \Phi_{f,g}(t,x,v)\; h \dtxv
\end{align}
for all $h\in H^1(]0,T[\times\RR^6)$ with $\supp h(t) \subset\BR,\, t\in[0,T]$ where $\Phi_{f,g}$ is given by \eqref{DEFPHI}. Moreover,
\begin{align}
\label{DBLAG}
\begin{aligned}
&(\partial_B \Lag)(f,B,g)[H] = \lambda \langle D_x B, D_x H \rangle_{L^2} \;- \hspace{-4pt}\int\limits_{[0,T]\times \RR^6} (v\times H)\cdot\delv f\; g \dtxv \\
& \quad = \hspace{-4pt} \int\limits_{[0,T]\times \RR^3} \left[ - \lambda \laplace_x B +  \int\limits_{\RR^3} v\times \delv f\; g \dv \right] \cdot H \dtx
\end{aligned} 
\end{align}
for all $H\in\VV$. Apparently, the derivative with respect to $B$ looks pretty nice while the derivative with respect to $f$ is rather complicated. However if we insert those terms in \eqref{DJLAG} we can still choose $g$. Now the idea of the Lagrangian technique is to choose $g$ in such a way that the term $(\partial_f \Lag)(f_B,B,g)[f_B'[H]]$ vanishes. \pskip

We consider the following final value problem which we will call the \textbf{costate equation}
\begin{align}\vspace{-2mm}
\label{COSTEQ}
\begin{cases}
\delt g + v\cdot\delx g  - \delx\psi_{f_B}\cdot\delv g + (v\times B)\cdot\delv g = \Phi_{f_B,g}\,\chi \\[0.15cm]
g\big\vert_{t=T}=f_B(T)-f_d
\end{cases}
\end{align}
where $\chi \in C^2_c(\RR^6;[0,1])$ with $\chi = 1$ on $B_{R_Z}(0)$ and $\supp \chi \in B_{2R_Z}(0)$ denotes an arbitrary but fixed cut-off function. Here $R_Z$ is the constant from \cite[Lem.\,8]{knopf}, i.e., for all $s,t\in[0,T]$, $Z_B(s,t,\BR) \subset \BRZ$. Existence and uniqueness of a strong solution to this system will be established in the following theorem:

%
%

\begin{thm}
	\label{ADJS}
	\hypertarget{HADJS}
	Let $B\in\BB$ be arbitrary. The costate equation \eqref{COSTEQ} has a unique strong solution
	$g_B\in W^{1,2}\big(0,T;C_b(\RR^6)\big)\cap C\big([0,T];C^1_b(\RR^6)\big) \cap L^\infty\big(0,T;H^2(\RR^6)\big)$ with compact support $\supp g_B(t) \subset B_{R^*}(0)$ for all ${t\in[0,T]}$ and some radius $R^*>0$ depending only on $\mathring f,f_d,T,K$ and $\beta$. \pskip
	
	In this case $g_B\big\vert_{\BR}$ does not depend on the choice of $\chi$. \pskip
	
	Moreover $g_B$ depends Hölder-continuously on $B$ in such a way that there exists some constant $C\ge 0$ depending only on $\mathring f,f_d,T,K,\beta$ and $\|\chi\|_{C^1_b}$ such that for all $B,H\in\BB$,
	\begin{align}
	\label{HCG}
	\|g_B - g_H\|_{W^{1,2}(0,T;C_b)} + \|g_B - g_H\|_{C([0,T];C^1_b)} &\le C\|B-H\|_{\VV}^{\gamma}.
	\end{align}
\end{thm}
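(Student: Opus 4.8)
The plan is to recognize the costate equation \eqref{COSTEQ} as a final-value instance of the general linear Vlasov equation \eqref{LVL} and to build on Corollary~\ref{WSLVL}. Concretely, I would set $\MA=-\delx\psi_{f_B}$, $\MB=B$, $\MC=0$, $\Mb=0$, take $\Ma=f_B$ in the nonlocal term (so that $\Phi_{f_B,g}$ is exactly the expression \eqref{DEFPHI} with the unknown $g$ in the slot of $f$, which makes the source linear in $g$), let $\Mchi=\chi$, and prescribe the \emph{terminal} datum $\Mf=f_B(T)-f_d$. Since $f_B\in C([0,T];C^1_b)\cap L^\infty(0,T;W^{2,\beta})$ with uniformly compact support, the electric field $\MA=-\delx\psi_{f_B}$ lies in $C([0,T];C^{1,\gamma})$, the coefficient $\Ma=f_B$ satisfies \eqref{RCa} and \eqref{SC1}, and $\MB=B\in L^2(0,T;C^{1,\gamma})$; moreover $f_B(T)$ inherits $C^1_b\cap W^{2,\beta}$ regularity, so $\Mf=f_B(T)-f_d$ is compactly supported and lies in $C^1_c\cap H^2$ (using $\beta>2$ on the support). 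Thus the hypotheses of Corollary~\ref{WSLVL}(b) are met --- after a harmless approximation of $\Mf$ by $C^2_c$ data if one insists on \eqref{RCf} literally --- and, via the final-value version (Comment~(a), with the representation \eqref{EXPLVL2}), this yields a unique strong solution $g_B\in W^{1,2}(0,T;C_b)\cap C([0,T];C^1_b)$, the uniform support bound $\supp g_B(t)\subset B_{R^*}(0)$, and the independence of $g_B\vert_{\BR}$ of the choice of $\chi$. Here one observes that the characteristic system of \eqref{COSTEQ} is exactly the Vlasov--Poisson flow $Z_B$, so that \cite[Lem.\,8]{knopf} applies: choosing $R_Z$ with $Z_B(s,t,\BR)\subset\BRZ$ and $\chi\equiv1$ on $\BRZ$, the characteristic argument of Corollary~\ref{WSLVL}(b) shows that the values on $\BR$ do not see the cut-off.

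To upgrade to $L^\infty(0,T;H^2)$ I would differentiate the representation formula \eqref{EXPLVL2} twice in $z$. The boundary term contributes $D_z^2\Mf(Z(T,t,\cdot))$, which is controlled in $L^2$ because $\Mf\in H^2$ and the flow $Z_B$ has bounded Jacobian by Lemma~\ref{LVLZ}; the integral term contributes second $z$-derivatives of $\chi\,\Phi_{f_B,g}(s,Z)$. The only nonlocal piece is handled by the estimate \eqref{ESTPHI2} for $\Phi'_{\Ma,f}$, combined with the derivative identity $\delx\Phi_{\Ma,f}=\Phi'_{\Ma,f}$ recorded after \eqref{DEFPHI2} and the fact that $f_B\in W^{2,\beta}$ supplies the required $\delz^2 f_B$. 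Together with the $\delz Z$ and $D_z^2 Z$ bounds from Lemma~\ref{LVLZ}, this produces an integral inequality for $\|D_z^2 g_B(t)\|_{L^2}$, which Gronwall's lemma closes into the asserted bound $g_B\in L^\infty(0,T;H^2)$ with the stated dependence of the radius $R^*$ and the norm.

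The Hölder estimate \eqref{HCG} is the core of the theorem and, I expect, the main obstacle, because the field $B$ enters \eqref{COSTEQ} in three coupled ways: through the transport coefficients $\MA=-\delx\psi_{f_B}$ and $\MB=B$ (hence through the flow $Z_B$), through the terminal datum $f_B(T)-f_d$, and through the nonlocal source $\Phi_{f_B,g}$. My plan mirrors the comparison technique of Corollary~\ref{WSLVL}(b) and Theorem~\ref{FDCSO}(b): fix $B,H\in\BB$, write \eqref{EXPLVL2} for $g_B$ and $g_H$, and estimate their difference on the common support ball. Each mechanism is bounded separately --- the flow differences $\|Z_B-Z_H\|_\infty\le C\|B-H\|_\VV$ and $\|\delz Z_B-\delz Z_H\|_\infty\le C\|B-H\|_\VV^{\gamma}$ as in \cite[Lem.\,9]{knopf}; the terminal, gradient and potential differences $\|f_B(T)-f_H(T)\|$, $\|\delz f_B-\delz f_H\|$ and $\|\delx\psi_{f_B}-\delx\psi_{f_H}\|$ via the field-state estimates \eqref{HCF}; and the source difference through the linearity $\Phi_{f_B,g}-\Phi_{f_H,g}=\Phi_{f_B-f_H,g}$ together with \eqref{ESTPHI1}, which turns it into $c\,\|\delv(f_B-f_H)\|_\infty\,\|g\|_{L^2}\le C\|B-H\|_\VV^{\gamma}\|g\|_{L^2}$. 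Assembling these and invoking Gronwall's lemma yields $\|g_B-g_H\|_{W^{1,2}(0,T;C_b)}+\|g_B-g_H\|_{C([0,T];C^1_b)}\le C\|B-H\|_\VV^{\gamma}$, the exponent $\gamma$ being forced, exactly as before, by the Hölder-continuous dependence of the flow's gradient on the field. The delicacy lies in controlling all three dependencies simultaneously and feeding them through a single Gronwall argument.
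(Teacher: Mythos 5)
Your overall machinery (casting \eqref{COSTEQ} as a final-value instance of \eqref{LVL}, the representation formula \eqref{EXPLVL2}, flow-difference estimates, Gronwall) matches the paper's, but there is a genuine gap at the very first step: you invoke Corollary~\ref{WSLVL}(b) for the terminal datum $\Mf=f_B(T)-f_d$, which does \emph{not} satisfy its hypothesis \eqref{RCf}, since $f_B(T)$ is only $C^1_c\cap W^{2,\beta}$ and not $C^2_c$. The paper flags exactly this obstruction and resolves it with a trick you have not found: writing $g_B=\tg_B-h_B$ by linearity, where $\tg_B$ has terminal datum $f_B(T)$ and $h_B$ has terminal datum $f_d$, and then observing that $\Phi_{f_B,f_B}=0$ forces $\tg_B=f_B$ by uniqueness. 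Hence $g_B=f_B-h_B$, where $h_B$ has the genuinely $C^2_c$ terminal datum $f_d$, so Corollary~\ref{WSLVL}(b) applies to $h_B$ without any approximation of the datum, and all three conclusions (the $W^{1,2}(0,T;C_b)\cap C([0,T];C^1_b)$ regularity, the Hölder estimate \eqref{HCG}, and the $L^\infty(0,T;H^2)$ bound) reduce to the corresponding known statements for $f_B$ plus estimates for $h_B$ with smooth datum.

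Your proposed workaround --- ``a harmless approximation of $\Mf$ by $C^2_c$ data'' --- is precisely where the difficulty hides and cannot be waved through. The Cauchy-sequence argument of Corollary~\ref{WSLVL}(b) in $C([0,T];C^1_b)$, the rate $\gamma$ in \eqref{HCG}, and the $L^\infty(0,T;H^2)$ bound all consume quantitative control of \emph{second} derivatives of the terminal datum (the boundary term $D_z^2\bigl[\Mf(Z(T,t,\cdot))\bigr]$ contains $(D^2\Mf)(Z)\,\delz Z\otimes\delz Z$, and the $C^1$-difference estimate needs a modulus of continuity for $D\Mf$ composed with two different flows). With $\Mf=f_B(T)-f_d$ these would have to be extracted from the a.e.-in-time statement $f_B\in L^\infty(0,T;W^{2,\beta})$, which does not immediately give regularity of the single time slice $f_B(T)$; making this rigorous is substantially more work than the paper's one-line observation. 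Your plan for the Hölder estimate and the $H^2$ upgrade is otherwise sound in outline (and the paper carries out essentially those computations, but for $h_B$ rather than for $g_B$ directly), so the missing ingredient is specifically the decomposition $g_B=f_B-h_B$ or an equally effective substitute for handling the rough terminal datum.
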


\begin{com}
	Note that only the values of $g_B$ on the ball $\BR$ will matter in the following approach. Therefore it is essential that those values are not influenced by the cut-off function $\Mchi$.
\end{com}

\begin{proof}
	$\,$\textit{Step 1}: Obviously the system \eqref{COSTEQ} has a unique strong solution $g_B$ in the sense of Corollary~\ref{WSLVL}\,(a). Unfortunately the coefficients do not satisfy the stronger regularity conditions of Corollary~\ref{WSLVL}\,(b) as the final value $f_B(T)-f_d$ is not in $C^2_c(\RR^6)$. However, because of linearity, it holds that $g_B = \tg_B - h_B$ where $\tg_B$ is a solution of
	\begin{align*}
	\delt \tg + v\cdot\delx \tg  - \delx\psi_{f_B}\cdot\delv \tg + (v\times B)\cdot\delv \tg = \Phi_{f_B,\tg}\ \chi, \qquad 
	\tg\big\vert_{t=T}=f(T)
	\end{align*}
	and $h_B$ is a solution of
	\begin{align*}
	\delt h + v\cdot\delx h  - \delx\psi_{f_B}\cdot\delv h + (v\times B)\cdot\delv h = \Phi_{f_B,h}\ \chi, \qquad 
	h\big\vert_{t=T}=f_d
	\end{align*}
	Now the first system has a unique strong solution in the sense of Corollary~\ref{WSLVL}\,(a) and the second one possesses a strong solution in the sense of Corollary~\ref{WSLVL}\,(b) since ${f_d\in C^2_c(\RR^6)}$. Indeed the solution $\tg_B$ is much more regular. As $\Phi_{f_B,f_B}=0$ one can easily see that $f_B$ is a solution of the first system and thus, because of uniqueness, $\tg_B=f_B$. Consequently $g_B = f_B - h_B$ lies in the space $W^{1,2}(0,T;C_b)\cap C([0,T];C^1_b)$. Due to Corollary~\ref{WSLVL}\,(b) the values of $h_B$ on $B_R(0)$ do not depend on the choice of $\chi$. Of course $f_B$ does not depend on $\chi$ either and hence $g_B\big\vert_{\BR}$ does not depend on the choice of $\chi$. \pskip
	
	\textit{Step 2}: We will now prove the Hölder estimate. It suffices to establish the result for $h.$ as the result has already been proved for $f.$ in \cite[Cor.\,14]{knopf}. Therefore let $B,H\in\BB$ be arbitrary and let $C>0$ denote some generic constant depending only on $\mathring f$, $f_d$, $T$, $K$, $\beta$ and $\|\chi\|_{C^2_b}$. According to \cite[Lem.\,3]{knopf} Lemma there exist sequences $(B_k),(H_k)\subset C\big([0,T];C^2_b\big)$ such that 
	\begin{align*}
	\|B_k\|_\VV\le 2K,\quad \|H_k\|_\VV \le 2K, \quad \|B_k - B\|_{\VV} \to 0, \quad \|H_k - H\|_{\VV} \to 0
	\end{align*}
	if $k\to\infty$. By Corollary \ref{WSLVL}\,(b) (and its proof) the induced strong solutions $h_{B_k}$ and $h_{H_k}$ satisfy\vspace{-2mm}
	\begin{gather*}
	h_{B_k}\to h_B ,\; h_{H_k}\to h_H \quad\text{in}\; W^{1,2}(0,T;C_b) \cap C([0,T];C^1_b),\\
	\|h_{B_k}\|_{W^{1,2}(0,T;C_b)} + \|h_{B_k}\|_{C([0,T];C^1_b)} \le C,\\ \|h_{H_k}\|_{W^{1,2}(0,T;C_b)}+ \|h_{H_k}\|_{C([0,T];C^1_b)} \le  C.
	\end{gather*}
	The constant $C$ does not depend on $k$ since $\|B_k\|_{\VV}$ and $\|H_k\|_{\VV}$ are bounded by $2K$. Also note that there exists some constant $\varrho>0$ depending only on $\mathring f,f_d,T,K$ and $\beta$ (but not on $k$) such that ${\supp h_{B_k} \subset B_{\varrho}(0)}$, $\supp h_{H_k} \subset B_{\varrho}(0)$. As $h_{B_k}$ and $h_{H_k}$ are classical solutions they satisfy the implicit representation formula \eqref{EXPLVL2}. We also know from \cite[Lem.\,9]{knopf} (with $\varrho$ instead of $R$) that
	\begin{gather*}
	\|f_{B_k}(t) - f_{H_k}(t)\|_\infty \le C\; \|B_k - H_k\|_{\VV}\;,\\
	\|D_z f_{B_k}(t) - D_z f_{H_k}(t)\|_\infty \le C\; \|B_k - H_k\|_{\VV}^\gamma\;,\\
	\|Z_{B_k}(t) - Z_{H_k}(t)\|_{L^\infty(B_\varrho(0))} \le C\; \|B_k - H_k\|_{\VV}\\
	\|D_z Z_{B_k}(t) - D_z Z_{H_k}(t)\|_{L^\infty(B_\varrho(0))}\le C\; \|B_k - H_k\|_{\VV}^\gamma
	\end{gather*}
	for all $t\in[0,T]$. Together with \cite[Lem.\,8]{knopf} this yields
	\begin{align*}
	&\|h_{B_k}(t) - h_{H_k}(t)\|_{L^\infty} \\[2mm]
	&\le \|f_d\|_{C^1_b} \;\|Z_{B_k}(T,t,\cdot)-Z_{H_k}(T,t,\cdot)\|_{L^\infty(B_{\varrho}(0))}\\
	&\quad + \int\limits_t^T \|\Phi_{f_{B_k},h_{B_k}}(s,Z_{B_k}(s,t,\cdot)) - \Phi_{f_{H_k},h_{H_k}}(s,Z_{H_k}(s,t,\cdot))\|_{L^\infty(B_{\varrho}(0))}\ds\\
	%
	&\le C\; \|B_k-H_k\|_{\VV}^{\gamma} + C\; \int\limits_t^T \|h_{B_k}(s)- h_{B_k}(s)\|_{L^2}\ds
	\end{align*}
	and hence $\|h_{B_k} - h_{H_k}\|_{C([0,T];C_b)} \le C\; \|B_k-H_k\|_{\VV}^{\gamma}$. By a similar computation,
	\begin{align*}
	\|\delz h_{B_k}(t) - \delz h_{H_k}(t)\|_{L^\infty} 
	\le C \|B_k-H_k\|_{\VV}^{\gamma} + C\int\limits_t^T \|\delz h_{B_k}(s)- \delz h_{B_k}(s)\|_{L^2}\ds
	\end{align*}
	and consequently $\|\delz h_{B_k}(t) - \delz h_{H_k}\|_{C([0,T];C_b)} \le C\; \|B_k-H_k\|_{\VV}^{\gamma}$ by Gronwall's lemma. 
	Expressing $\delt h_{B_k}$ and $\delt h_{H_k}$ by their corresponding Vlasov equation then yields $\|\delt h_{B_k} - \delt h_{H_k}\|_{L^2(0,T;C_b)} \le C\; \|B_k-H_k\|_{\VV}^{\gamma}$. In summary, we have established that
	\begin{align*}
	\|h_{B_k} - h_{H_k}\|_{W^{1,2}(0,T;C_b)} + \|h_{B_k} - h_{H_k}\|_{C([0,T];C^1_b)}\le C\; \|B_k-H_k\|_{\VV}^{\gamma}\;.
	\end{align*}
	For $k\to\infty$ this directly implies that
	\begin{align*}
	\|h_{B} - h_{H}\|_{W^{1,2}(0,T;C_b)} + \|h_{B} - h_{H}\|_{C([0,T];C^1_b)}\le C\; \|B-H\|_{\VV}^{\gamma}\;.
	\end{align*}
	and hence
	\begin{align*}
	\|g_{B} - g_{H}\|_{W^{1,2}(0,T;C_b)} + \|g_{B} - g_{H}\|_{C([0,T];C^1_b)}\le C\; \|B-H\|_{\VV}^{\gamma}\;.
	\end{align*}
	\smallskip
	\textit{Step 3}: We must still prove that $g_B\in L^\infty(0,T;H^2)$. Since $f_B\in L^\infty(0,T;H^2)$ has already been established in \cite[Thm.\,12]{knopf}, it suffices to show that $h_B$ is twice weakly differentiable with respect to $z$ and $D^2_z h_B\in L^\infty(0,T;L^2)$. Recall that for any $k\in\NN$, $f_{B_k}\in C([0,T];C^2_b)$ according to \cite[Thm.\,7]{knopf} and $h_{B_k}\in C([0,T];C^1_b)$ according to Theorem \ref{CSLVL}. Thus for all $i\in\{1,2,3\}$,
	\begin{gather*}
	\delxi\delvi f_{B_k}\, h_{B_k} + \delvi f_{B_k}\, \delxi h_{B_k} \in C([0,T];C_b),\\
	\supp \big[\delxi\delvi f_{B_k}\, h_{B_k} + \delvi f_{B_k}\, \delxi h_{B_k}\big](t) \subset \BR, \quad \text{for all}\; t\in [0,T],\\
	\psi_{(\delxi\delvi f_{B_k}\, h_{B_k} + \delvi f_{B_k}\, \delxi h_{B_k})} \in C\big([0,T];C^1_b\big) \cap C\big([0,T];H^2(B_{r}(0))\big)\,,\quad r>0.
	\end{gather*}
	The third line follows from \cite[Lem.\,2]{knopf}\vspace{-2mm}. Consequently,
	\begin{align*}
	&\Phi_{f_{B_k},h_{B_k}} = \sum_{i=1}^3 \delxi\psi_{\delvi f_{B_k}\,h_{B_k}} = \sum_{i=1}^3 \psi_{(\delxi\delvi f_{B_k}\, h_{B_k} + \delvi f_{B_k}\, \delxi h_{B_k})}
	\end{align*}
	lies in $C\big([0,T];C^1_b\big) \cap C\big([0,T];H^2(B_{r}(0))\big)$ for any $r>0$ and hence
	\begin{align}
	\label{EQZBK}
	\Phi_{f_{B_k},h_{B_k}}\,\chi \in C\big([0,T];C^1_b\big) \cap C\big([0,T];H^2\big)
	\end{align}
	since $\chi$ is compactly supported. We also know from \cite[Lem.\,8]{knopf} (with $\varrho$ instead of $R$) that $Z_{B_k}$ is twice continuously differentiable with respect to $z$ and
	\begin{align*}
	\big\|  t\mapsto Z_{B_k}(s,t,\cdot)  \big \|_{L^\infty(0,T;H^2(B_\varrho(0)))} \le C,\quad s\in[0,T].
	\end{align*}
	Now recall the implicit representation formula \eqref{EXPLVL2} for $h_{B_k}$ that is\vspace{-1mm}
	\begin{align}
	\label{EQHBK}
	h_{B_k}(t,z) = f_d\big(Z_{B_k}(T,t,z)\big) - \int\limits_t^T \big[ \Phi_{f_{B_k},h_{B_k}}\, \chi \big]\big( s,Z_{B_k}(s,t,z) \big) \ds
	\end{align}
	for all $(t,z) \in [0,T]\times \RR6$. As $f_d\in C^2_c(\RR^6)$ and $Z_{B_k}(T,t,\cdot)\in C^2(\RR^6)$, the term $f_d(Z_{B_k}(T,t,z))$ is twice continuously differentiable with respect to $z$ by chain rule. 
	By approximating $\Phi_{f_{B_k},h_{B_k}}\,\chi$ by sufficiently smooth functions one can easily show that the second summand of \eqref{EQHBK} is twice weakly differentiable and the derivatives can be computed by chain rule (with weak instead of classical derivatives if necessary). 
	\newpage\noindent
	Thus, for any $i,j\in\{1,2,3\}$, the weak derivative $\delzi\delzj h_{B_k}$ can be bounded by
	\begin{align*}
	\|\delzi\delzj h_{B_k}(t)\|_{L^2}
	&\le C\,\|f_d\|_{C^2_b}\; \|Z_{B_k}(0,t,\cdot) \|_{H^2(B_{\varrho}(0))}\\
	&\quad + C\,\int\limits_t^T \|\Phi_{f_{B_k},h_{B_k}}(s)\|_{H^2(B_{\varrho}(0))}\; \|\delz Z_{B_k}(s,t,\cdot)\|_{L^\infty(B_{\varrho}(0))}^2 \ds\\
	&\quad + C\,\int\limits_t^T \|\Phi_{f_{B_k},h_{B_k}}(s)\|_{W^{1,\infty}(B_{\varrho}(0))}\; \|Z_{B_k}(s,t,\cdot)\|_{H^2(B_{\varrho}(0))} \ds\\
	&\le C + C\,\|Z_{B_k}(0) \|_{H^2(B_{\varrho}(0))} + C\,\int\limits_0^T  \|Z_{B_k}(s)\|_{H^2(B_{\varrho}(0))} \ds\;.
	\end{align*}
	By \eqref{EQZBK} this finally yields $\|\delzi\delzj h_{B_k}\|_{L^\infty(0,T;L^2)}^2 \le C$. Then $(\delzi\delzj h_{B_k})$ is converging with respect to the weak-*-topology on $[L^1(0,T;L^2)]^*=L^\infty(0,T;L^2)$ up to a subsequence. Because of uniqueness, the weak-*-limit of the sequence $(\delzi\delzj h_{B_k})$ must be $\delzi\delzj h_{B}$ and especially $h_B \in L^\infty(0,T;H^2)$. This completes the proof.
\end{proof}

Now inserting the state $f_B$ and its costate $g_B$ in \eqref{DJLAG} yields
\begin{align}
\label{EQFDJ}
J'(B)[H] = (\partial_B \Lag)(f_B,B,g_B)\big[H\big],\quad H\in\VV
\end{align}
since $f_B'[H]\big\vert_{t=0} = 0$. This provides a necessary optimality condition:

\begin{thm}
	\label{NOC1}
	\hypertarget{HNOC1} $\;$
	\begin{itemize}
		\item[\textnormal{(a)}]
		The Fréchet derivative of $J$ at the point $B\in\BB$ is given by 
		\begin{align*}
		J'(B)[H] = 	\hspace{-5pt}\int\limits_{[0,T]\times\RR^3}\hspace{-5pt} \left( -\lambda \laplace_x B + \int\limits_{\RR^3} v\times \delv f_B \; g_B \dv \right)\cdot H \dtx, \quad H\in\VV.
		\end{align*}
		%
		\item[\textnormal{(b)}]
		Let us assume that $\B*\in\BB$ is a locally optimal solution of the optimization problem \eqref{OP1}. Then for all $B\in\BB$,\vspace{-1mm}
		\begin{align*}
		\int\limits_{[0,T]\times\RR^3} \hspace{-7pt} \left( -\lambda \laplace_x \B* + \int\limits_{\RR^3} v\stimes \delv f_\B* \; g_\B* \dv \right)\hspace{-2pt}\cdot\hspace{-1pt} (B-\bar B) \; \mathrm d(t,x)
		\begin{cases}
		= 0, &\hspace{-5pt} \text{if } \B* \in \IBB \\
		\ge 0, &\hspace{-5pt} \text{if } \B* \in \partial\BB
		\end{cases}\hspace{-5pt}.
		\end{align*}
		\item[\textnormal{(c)}]
		If we additionally assume that $\B*\in\IBB$ then $\B*$ satisfies the semilinear Poisson equation
		\begin{align}
		\label{FRJ}
		-\laplace_x \B* = -\frac 1 \lambda \int\limits_{\RR^3} v\times \delv f_\B* \; g_\B* \dv
		.	\end{align}
		In this case $\B*\in C([0,T];C^2_b(\RR^3))$ with
		\begin{align}
		\label{FRJ-2}
		\B*(t,x) = -\frac 1 {4\pi\lambda} \iint \frac{1}{|x-y|}\; w\times \delv f_\B*(t,y,w)\; g_\B*(t,y,w) \;\mathrm dw \mathrm dy
		\end{align}
		for all $t\in[0,T]$ and $x\in\RR^3$. Thus $\B*$ does not depend on the choice of $\chi$ as long as $\chi = 1$ on $\BRZ$ as it only depends on $g_\B*\big\vert_\BR$. 
	\end{itemize}
\end{thm}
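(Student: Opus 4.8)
Part (a) should fall out immediately from the two facts already assembled before the statement, namely the identity \eqref{EQFDJ}, $J'(B)[H]=(\partial_B\Lag)(f_B,B,g_B)[H]$, and the explicit form \eqref{DBLAG} of the partial derivative $\partial_B\Lag$: inserting $f=f_B$, $g=g_B$ into \eqref{DBLAG} and comparing with \eqref{EQFDJ} produces exactly the asserted expression. The only point worth re-deriving is why \eqref{EQFDJ} holds, i.e. why the choice $g=g_B$ annihilates the contribution $(\partial_f\Lag)(f_B,B,g_B)[f_B'[H]]$ in the chain rule \eqref{DJLAG}. Reading off \eqref{DFLAG} with $h=f_B'[H]$: the two boundary terms at $t=T$ cancel because the costate carries the final value $g_B(T)=f_B(T)-f_d$, the boundary term at $t=0$ vanishes because $f_B'[H]$ has zero initial datum (Theorem \ref{FDCSO}), and the costate equation \eqref{COSTEQ} reduces the remaining volume integral to $\int \Phi_{f_B,g_B}\,(\chi-1)\,f_B'[H]\dtxv$. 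The support of $f_B'[H](t)$ is carried by the characteristic flow of the source, which lives in $\BR$, and $Z_B(s,t,\BR)\subset\BRZ$ by \cite[Lem.\,8]{knopf}; since $\chi\equiv1$ on $\BRZ$ the factor $\chi-1$ kills this last integral. This establishes (a).

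For (b) I would simply substitute the formula of (a) into the variational inequality of Lemma \ref{FDJ-1}. Given a locally optimal $\B*$ and an arbitrary $B\in\BB$, convexity of $\BB$ makes $H:=B-\B*$ an admissible direction, so that $\B*+tH\in\BB$ for all $t\in[0,1]$; Lemma \ref{FDJ-1} then yields $J'(\B*)[B-\B*]\ge0$, with equality when $\B*\in\IBB$. Replacing $J'(\B*)[B-\B*]$ by the integral expression from (a) gives precisely the claimed inequality (resp.\ equality).

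For (c) assume $\B*\in\IBB$ and (tacitly) $\lambda>0$. Since $\B*$ is interior, $\B*\pm H\in\BB$ for all sufficiently small $H\in\VV$, so the equality case of (b) together with the linearity of $J'(\B*)$ forces $J'(\B*)[H]=0$ for every $H\in\VV$; testing against fields in $C_c^\infty([0,T]\times\RR^3)$ (which lie in $\VV$ and are dense in $L^2$) and applying the fundamental lemma of the calculus of variations identifies the bracket in (a) as zero, i.e.\ the weak form of \eqref{FRJ}. To reach the representation \eqref{FRJ-2} I would write the right-hand side of \eqref{FRJ} as $F(t,x):=-\tfrac{1}{\lambda}\int_{\RR^3} w\times\delv f_\B*(t,x,w)\,g_\B*(t,x,w)\,\mathrm dw$ and solve $-\laplace_x\B*(t)=F(t)$ for each $t$ by the Newtonian potential $\B*(t)=\newton F(t)$, which is exactly \eqref{FRJ-2}; the decay of $\B*(t)\in H^1(\RR^3)$ selects this as the unique such solution. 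The regularity statement $\B*\in C([0,T];C^2_b)$ is the main obstacle and the only genuinely analytic step: it requires that $F$ be, uniformly in $t$, a compactly supported Hölder field that is also continuous in $t$. I would obtain this from the regularity of the two factors — $f_\B*\in L^\infty(0,T;C^{1,\gamma})$ (so $\delv f_\B*$ is $C^{0,\gamma}$ in $x$) and $g_\B*\in C([0,T];C^1_b)$ from Theorem \ref{ADJS}, both supported in $\BR$ — so that $F\in C([0,T];C^{0,\gamma}_c)$, whence the elliptic estimate \cite[Lem.\,2]{knopf} upgrades $\B*$ to $C([0,T];C^{2,\gamma})\subset C([0,T];C^2_b)$. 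Finally, because $\delv f_\B*$ is supported in $\BR$, the integrand in \eqref{FRJ-2} only sees $g_\B*\big\vert_\BR$, which by Theorem \ref{ADJS} is independent of $\chi$; hence so is $\B*$.
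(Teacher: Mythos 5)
Your argument for (a) and (b) is exactly the paper's: (a) is read off from \eqref{EQFDJ} and \eqref{DBLAG}, and (b) is Lemma \ref{FDJ-1} applied with $H=B-\B*$; your justification of \eqref{EQFDJ} itself --- in particular the observation that the residual term $\int \Phi_{f_B,g_B}\,(\chi-1)\,f_B'[H]\dtxv$ vanishes because $\supp f_B'[H](t)\subset\BRZ$ where $\chi\equiv 1$ --- correctly fills in a point the paper only gestures at (it cites only $f_B'[H]\big\vert_{t=0}=0$). The one place you genuinely diverge is the regularity $\B*\in C([0,T];C^2_b)$ in (c): the paper integrates by parts to show that the source $p(t,x)=\int v\times\delv f_{\B*}\,g_{\B*}\dv$ lies in $C([0,T];C^1_b)$ with compact support and then uses classical potential theory (a $C^1_c$ density yields a $C^2$ potential), whereas you establish H\"older continuity of the source and appeal to Schauder-type regularity of the Newtonian potential. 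Both routes work; note only that the H\"older (indeed Lipschitz) continuity of $\delv f_{\B*}$ in $x$ should be drawn from the bound $\|D_z^2 f_B\|_{C([0,T];C_b)}\le C_3$ in \eqref{HCF} rather than from a Sobolev embedding $W^{2,\beta}(\RR^6)\hookrightarrow C^{1,\gamma}$, which would require $\beta>6$ while the paper assumes only $\beta>3$.
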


\begin{proof} (a) follows immediately from \eqref{DBLAG} and \eqref{EQFDJ}. (b) is a direct consequence of Lemma \ref{FDJ-1} and (a) with ${H:=B-\bar B}$ and (b) implies \eqref{FRJ}. Recall that for almost all $t\in [0,T]$, $\B*(t)$ has a continuous representative satisfying $\B*_i(t,x) \to 0$ if $|x|\to\infty$ for every $i\in\{1,2,3\}$. Hence $\B*$ is uniquely determined by \eqref{FRJ-2}.
	We must still prove that $\B*$ lies in $C([0,T];C^2_b(\RR^3))$. Recall that $f_\B*$ and $g_\B*$ are in $ W^{1,2}(0,T;C_b(\RR^6))$ $\cap\, C([0,T];C_b^1(\RR^6))$ as $\B*\in\BB$. Thus
	\begin{align*}
	p:[0,T]\times\RR^3 \to\RR^3,\quad (t,x)\mapsto \int\limits v\times \delv f_\B* \; g_\B* \dv
	\end{align*}
	is continuous with $\supp p(t)\subset \BR$ for all $t\in[0,T]$. By approximating $f_\B*$ by $C([0,T];C^2_b)$-functions and using integration by parts one can easily show that $p$ is continuously differentiable where the partial derivatives are given by
	\begin{align*}
	\delxi p =  - \int\limits (v\times \delv f_\B*) \; \delxi g_\B* - (v\times \delv g_\B*) \; \delxi f_\B*\dv, \quad i=1,2,3.
	\end{align*}
	Consequently $\B*\in C([0,T];C^2_b(\RR^3;\RR^3))$. Since $g_\B*$ does not depend on $\chi$ as long as $\chi=1$ on $\BRZ$ the same holds for $\B*$. \end{proof}

Note that Theorem \ref{NOC1} provides only a necessary but not a sufficient condition for local optimality. If a control $B$ satisfies the above condition it could still be a saddle point or even a local maximum point. Theorem \ref{NOC1} does also not provide uniqueness of the locally optimal solution. However the globally optimal solution that is predicted by \cite[Thm.\,16]{knopf} is also locally optimal. Thus we have at least one control to satisfy the necessary optimality condition of Theorem \ref{NOC1}. \pskip

Assuming that there exists a locally optimal solution $\B*\in\IBB$ we can easily deduce from Theorem \ref{NOC1} that the triple $(f_\B*,g_\B*,\B*)$ is a classical solution of some certain system of equations.

\begin{cor}
	\label{OSY1}
	Suppose that $\B*\in\IBB$ is a locally optimal solution of the optimization problem \eqref{OP1}. Let $f_\B*$ and $g_\B*$ be its induced state and costate. Then $f_\B*, g_\B*\in C^1([0,T]\times\RR^6)$  and the triple
	$(f_\B*,g_\B*,\B*)$ is a classical solution of the \textbf{optimality system}
	\begin{align}
	\label{OS-SEQ0}
	\begin{cases}
	\delt f + v\cdot \delx f - \delx\psi_f\cdot\delv f + (v\stimes B)\cdot\delv f = 0, &\hspace{-40pt} f\big\vert_{t=0} = \mathring f\\[0.15cm]
	\delt g + v\cdot \delx g - \delx\psi_f\cdot\delv g + (v\stimes B)\cdot\delv g = \Phi_{f,g}\chi, &\hspace{-40pt} g\big\vert_{t=T} = f(T)-f_d\\[0.15cm]
	B(t,x) = -\frac 1 {4\pi\lambda} \iint \frac{1}{|x-y|}\; w\times \delv f(t,y,w)\; g(t,y,w) \dyw\;.
	\end{cases}
	\end{align}
	For all $t\in [0,T]$, $\supp f_\B*(t) \subset \BR$ and $\supp g_{\B*}(t) \subset B_{R^*}(0)$.
\end{cor}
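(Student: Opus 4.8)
The plan is to observe that the three displayed equations of \eqref{OS-SEQ0} have, in essence, already been produced by the preceding analysis, so that the only genuine work is to upgrade the regularity of $f_{\bar B}$ and $g_{\bar B}$ from the generic ``strong solution'' level to classical $C^1$ solutions. The decisive input is Theorem \ref{NOC1}\,(c): since $\bar B\in\IBB$ is locally optimal, it obeys the representation \eqref{FRJ-2} and therefore $\bar B\in C([0,T];C^2_b(\RR^3;\RR^3))$. This is strictly more regular than a generic admissible field, which lies only in $\VV$ and hence is merely $L^2$ in time; it is exactly this extra time-continuity that will drive the bootstrap.

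First I would upgrade the state. With $\bar B\in C([0,T];C^2_b)$ at hand, \cite[Thm.\,7]{knopf} applies and yields $f_{\bar B}\in C([0,T];C^2_b(\RR^6))$. Consequently $\rho_{f_{\bar B}}$ and $\delx\psi_{f_{\bar B}}$ are continuous in $(t,x)$ by the potential estimates of \cite[Lem.\,2]{knopf}, and solving the Vlasov equation for the time derivative,
\[
\delt f_{\bar B} = -\,v\cdot\delx f_{\bar B} + \delx\psi_{f_{\bar B}}\cdot\delv f_{\bar B} - (v\stimes\bar B)\cdot\delv f_{\bar B},
\]
exhibits $\delt f_{\bar B}$ as a continuous function of $(t,x,v)$. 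Hence $f_{\bar B}\in C^1([0,T]\times\RR^6)$, and by continuity the first equation of \eqref{OS-SEQ0} holds everywhere, not merely almost everywhere. The support statement $\supp f_{\bar B}(t)\subset\BR$ is inherited directly from \cite{knopf}.

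Next I would upgrade the costate. The crucial consequence of the previous step is that the final datum $f_{\bar B}(T)-f_d$ now lies in $C^2_c(\RR^6)$, since $f_{\bar B}(T)\in C^2_b$ has compact support and $f_d\in C^2_c$. I would then read the costate equation \eqref{COSTEQ} as an instance of the linear system \eqref{LVL} in its final-value form \eqref{EXPLVL2}, with coefficients $\MA=-\delx\psi_{f_{\bar B}}$, $\MB=\bar B$, $\Ma=f_{\bar B}$, $\Mb=0$, $\MC=0$ and cut-off $\Mchi=\chi$, and verify that these satisfy the full hypotheses \eqref{RCa}--\eqref{RCchi} of Proposition \ref{CSLVL}: indeed $\MA\in C([0,T];C^{1,\gamma})$ and $\Ma\in C([0,T];C^1_b)$ because $f_{\bar B}\in C([0,T];C^2_b)$, while $\MB=\bar B\in C([0,T];C^2_b)\subset C([0,T];C^{1,\gamma})$ and $\Mchi=\chi\in C^2_c$. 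Proposition \ref{CSLVL} then furnishes a classical solution in $C^1([0,T]\times\RR^6)$, which by the uniqueness of strong solutions (Corollary \ref{WSLVL}\,(a)) must coincide with the costate $g_{\bar B}$ produced by Theorem \ref{ADJS}. Thus $g_{\bar B}\in C^1([0,T]\times\RR^6)$, it solves the second equation of \eqref{OS-SEQ0} classically, and $\supp g_{\bar B}(t)\subset B_{R^*}(0)$ is the support bound of Theorem \ref{ADJS}; the third equation of \eqref{OS-SEQ0} is exactly \eqref{FRJ-2}.

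The only subtle point --- and what I expect to be the main obstacle to phrase correctly rather than to prove --- is the apparent circularity of the system: $\bar B$ is defined through $f_{\bar B}$ and $g_{\bar B}$, which in turn depend on $\bar B$. This is harmless here because existence and uniqueness of all three objects are already secured (by \cite{knopf}, Theorem \ref{ADJS} and Theorem \ref{NOC1}), so the argument is a pure regularity bootstrap rather than a fixed-point construction. The one computation that requires care is checking that the improved field regularity $\bar B\in C([0,T];C^2_b)$ propagates correctly through the coupling term $\Phi_{f_{\bar B},g}\,\chi$ so that the coefficient conditions of Proposition \ref{CSLVL} are genuinely met; once that is in place, the classical regularity of both $f_{\bar B}$ and $g_{\bar B}$, and hence the claimed classical solvability of the optimality system, follow without further difficulty.
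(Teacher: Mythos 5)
Your proof is correct and follows essentially the same route as the paper: Theorem \ref{NOC1}\,(c) gives $\bar B\in C([0,T];C^2_b)$, \cite[Thm.\,7]{knopf} then upgrades $f_{\bar B}$ to a classical $C^1$ solution with the stated support, and Proposition \ref{CSLVL} supplies classical regularity of the costate, with the third equation being exactly \eqref{FRJ-2}. The only minor difference is that the paper reuses the decomposition of $g_{\bar B}$ into $f_{\bar B}$ and $h_{\bar B}$ from the proof of Theorem \ref{ADJS} and applies Proposition \ref{CSLVL} to $h_{\bar B}$ (whose final value $f_d$ is $C^2_c$ by assumption), whereas you apply it directly to the full costate equation after observing that the final datum $f_{\bar B}(T)-f_d$ is now itself in $C^2_c(\RR^6)$; both arguments are valid.
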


\begin{proof}
	From Theorem \ref{NOC1} we know that $\B*\in C([0,T];C^2_b)$. Thus by \cite[Thm.\,7]{knopf} the solution $f_\B*$ is classical, lies in $C^1([0,T]\times\RR^6)\cap C([0,T];C^2_b)$ and satisfies $\supp f_\B*(t) \subset \BR$, $t\in[0,T]$. We can use the decomposition $g_\B*=f_\B*+h_\B*$ from the proof of Theorem \ref{ADJS} and from Proposition \ref{CSLVL} we can easily deduce that $g_\B*$ is classical, i.e., $g_\B*\in C^1([0,T]\times\RR^6)$ with $\supp g_\B*(t) \subset B_{R^*}(0)$, $t\in[0,T]$. The rest is obvious due to the construction of $f_\B*$, $g_\B*$ and Theorem \ref{NOC1}.
\end{proof}

\subsection{A sufficient condition for local optimality}


To prove that our cost functional is twice continuously Fréchet differentiable we will need Fréchet differentiability of first order of the costate.

\begin{lem}
	\label{FDCCSO}
	\hypertarget{HFDCCSO}
	Let $g.:\BB\to C([0,T];L^2(\RR^6)),\; B \mapsto g_B$ denote the field-costate operator. For any $B\in\BB$ and $H\in\VV$ there exists a unique strong solution $g_B^H\in {H^1(]0,T[\times\RR^6)}$ of the final value problem
	\begin{displaymath}
	\refstepcounter{equation}
	\label{GDEQ}
	\left\{
	\begin{aligned}
	&\delt g + v\cdot\delx g - \delx\psi_{f'_B[H]}\cdot\delv g_B - \delx\psi_{f_B}\cdot\delv g + (v\stimes B)\cdot\delv g + (v\stimes H)\cdot\delv g_B \\
	&\qquad = \Phi_{f_B,g}\chi - \Phi_{g_B,f_B'[H]}\chi \\[0.2cm]
	&g\big\vert_{t=T}=0\;. 
	\end{aligned} 
	\right.\hspace{-17pt} \textnormal{(\theequation)}
	\end{displaymath}
	Then the following holds:
	\begin{itemize}
		\item [\textnormal{(a)}] The control-costate operator $g.$ is Fr\'echet differentiable on $\IBB$ with respect to the $C([0,T];L^2(\RR^6))$-norm, i.e., for any $B\in\IBB$ there exists a unique linear operator $g'_B: \VV\to C([0,T];L^2(\RR^6))$ such that\\
		$$\forall \eps>0\; \exists \delta > 0\; \forall H\in \VV \text{ with } \|H\|_{\VV} < \delta :$$
		$$B+H \in \IBB \tand \frac{\| g_{B+H} - g_B - g'_B[H] \|_{C([0,T];L^2)}}{\|H\|_{\VV}} < \eps.$$
		The Fr\'echet derivative is given by $g'_B[H]=g_B^H$ for all $H\in\VV$.
		\item [\textnormal{(b)}] For all $B,H\in\IBB$, the solution $g_B^H$ depends Hölder-continuously on $B$ in such a way that there exists some constant $C>0$ depending only on $\mathring f, T, K$ and $\beta$ such that for all $A,B\in\IBB$,
		\begin{align}
		\label{CFG}
		\underset{\|H\|_\VV\le 1}{\sup}\;\|g_A'[H] - g_B'[H]\|_{L^2(0,T;L^2)} \le C \;\|A-B\|_{\VV}^{\gamma}.
		\end{align}
	\end{itemize}
\end{lem}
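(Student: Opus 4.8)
The plan is to run the argument of Theorem~\ref{FDCSO} with the costate equation in place of the state equation, reading \eqref{GDEQ} as a final-value problem of type \eqref{LVL}. First I would settle existence and uniqueness of $g_B^H$: with the dictionary $\MB=B$, $\MA=-\delx\psi_{f_B}$, $\Ma=f_B$, $\Mchi=\chi$, $\MC=0$ and inhomogeneity $\Mb=\delx\psi_{f_B'[H]}\cdot\delv g_B-(v\times H)\cdot\delv g_B-\Phi_{g_B,f_B'[H]}\chi$, the coefficients meet the hypotheses of Corollary~\ref{WSLVL}\,(a): $f_B,g_B\in C([0,T];C^1_b)$ by Theorem~\ref{ADJS}, and $f_B'[H]=f_B^H\in L^\infty\cap H^1$ by Theorem~\ref{FDCSO}, so by \eqref{ESTPHI1}--\eqref{ESTPHI4} and \cite[Lem.\,2]{knopf} one gets $\Mb\in L^2(0,T;C_b\cap H^1)$. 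The one delicate point is the final datum: differentiating the final condition $f_B(T)-f_d$ of \eqref{COSTEQ} forces $g_B^H=g_B'[H]$ to carry the final datum $f_B'[H](T)$, which is only an $L^2$-trace; I would produce it by the splitting $g_B^H=f_B^H+w$, where $w$ solves the same equation with zero final datum, to which Corollary~\ref{WSLVL}\,(a) applies verbatim, exactly as $g_B=f_B-h_B$ was used in Theorem~\ref{ADJS}.

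For (a) I would subtract the two costate equations for $B+H$ and $B$, writing $d_f:=f_{B+H}-f_B$ and $d_g:=g_{B+H}-g_B$. Using the bilinearity of $\Phi$ and the antisymmetry $\Phi_{u,w}=-\Phi_{w,u}$ (integration by parts in $v$, valid for compactly supported arguments), $d_g$ solves a linear costate-type equation with left-hand side $\delt d_g+v\cdot\delx d_g-\delx\psi_{f_B}\cdot\delv d_g+(v\times B)\cdot\delv d_g-\Phi_{f_B,d_g}\chi$ and final datum $d_f(T)$. Splitting $d_f=f_B^H+r_f$ with $\|r_f\|_{C([0,T];L^2)}\le C\|H\|_\VV^{1+\gamma}$ (Theorem~\ref{FDCSO}) and peeling off the equation for $g_B^H$, the difference $d_g-g_B^H$ solves the same equation with inhomogeneity $\mathcal R$ and final datum $r_f(T)$. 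Every term of $\mathcal R$ is quadratically small in $L^2(0,T;L^2)$: the genuine remainders $\delx\psi_{d_f}\cdot\delv d_g$, $(v\times H)\cdot\delv d_g$, $\Phi_{d_f,d_g}$ and the linearization defects $\delx\psi_{r_f}\cdot\delv g_B$, $\Phi_{r_f,g_B}$ are each bounded by $C\|H\|_\VV^{1+\gamma}$ using \cite[Lem.\,2]{knopf}, \eqref{ESTPHI1}--\eqref{ESTPHI4}, the bound $\|d_f\|_{C([0,T];C^1_b)}\le C\|H\|_\VV^\gamma$ from \eqref{HCF} and $\|d_g\|_{C([0,T];C^1_b)}\le C\|H\|_\VV^\gamma$ from \eqref{HCG}. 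Here the antisymmetry of $\Phi$ is essential: it lets me keep the $v$-derivative on the smooth factors $g_B,d_g$ rather than on the low-regularity factors $r_f,d_f$, so that e.g.\ $\Phi_{d_f,d_g}=-\Phi_{d_g,d_f}$ costs only $\|\delv d_g\|_\infty\|d_f\|_{L^2}\le C\|H\|_\VV^{1+\gamma}$ instead of the insufficient $\|H\|_\VV^{2\gamma}$. Finally the $L^2$-energy estimate for the costate equation (multiply by the solution; the transport and Lorentz terms are divergence-free and drop out, $\Phi_{f_B,\cdot}\chi$ is absorbed by \eqref{ESTPHI1}; then apply the plain and quadratic Gronwall lemmas as in the proof of Theorem~\ref{FDCSO}) yields $\|d_g-g_B^H\|_{C([0,T];L^2)}\le C(\|r_f(T)\|_{L^2}+\|\mathcal R\|_{L^2(0,T;L^2)})\le C\|H\|_\VV^{1+\gamma}$, which is precisely Fréchet differentiability with $g_B'[H]=g_B^H$.

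For (b) I would follow the approximation scheme of Theorem~\ref{FDCSO}\,(b): pick $(A_k),(B_k),(H_k)\subset C([0,T];C^{1,\gamma})$ converging to $A,B,H$ in $L^2(0,T;C^{1,\gamma})$, so that $g_{A_k}^{H_k},g_{B_k}^{H_k}$ are classical and obey the representation formula \eqref{EXPLVL2} along the flows $Z_{A_k},Z_{B_k}$ of Lemma~\ref{LVLZ}. Subtracting the two representations and estimating $\|g_{A_k}^{H_k}(t)-g_{B_k}^{H_k}(t)\|_{L^2}$, every coefficient difference is controlled by the appropriate Hölder estimate --- $\|f_{A_k}-f_{B_k}\|$ and $\|\delz f_{A_k}-\delz f_{B_k}\|$ via \cite[Lem.\,9]{knopf}, $\|g_{A_k}-g_{B_k}\|$ via \eqref{HCG}, $\|f_{A_k}^{H_k}-f_{B_k}^{H_k}\|$ via \eqref{CFF}, and the flow differences $\|Z_{A_k}-Z_{B_k}\|$, $\|\delz Z_{A_k}-\delz Z_{B_k}\|$ as in Theorem~\ref{FDCSO}\,(b) --- each being $O(\|A_k-B_k\|_\VV^\gamma)$ uniformly in $\|H_k\|_\VV\le 1$. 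Gronwall's lemma then gives $\|g_{A_k}^{H_k}-g_{B_k}^{H_k}\|_{L^2(0,T;L^2)}\le C\|A_k-B_k\|_\VV^\gamma$, and passing to the limit $k\to\infty$ yields \eqref{CFG}.

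The main obstacle, relative to Theorem~\ref{FDCSO}, is the self-consistent nonlocal term $\Phi_{f_B,g}\chi$ carrying the unknown on the right-hand side, together with the coupling of the costate to the state. Because of it one cannot rely on pure transport bounds and must instead use throughout the $L^2$-energy estimate of the costate equation and the antisymmetry $\Phi_{u,w}=-\Phi_{w,u}$ to distribute derivatives onto the regular factors; one must also invoke \emph{both} the first-order differentiability of $f$ (Theorem~\ref{FDCSO}) and the Hölder dependence of $f$ and $g$ (estimates \eqref{HCF} and \eqref{HCG}) at once. The remaining delicate bookkeeping is to keep the correct $L^2$ final datum $f_B'[H](T)$ in play, so that the remainder's final datum is itself $O(\|H\|_\VV^{1+\gamma})$ and does not spoil the estimate.
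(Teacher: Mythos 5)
Your proposal is correct and is exactly the argument the paper intends: the paper's entire proof here is the single sentence that it ``proceeds analogously to the proof of Theorem~\ref{FDCSO}'', and you have carried that analogy out faithfully, including the two genuinely new ingredients (the antisymmetry $\Phi_{u,w}=-\Phi_{w,u}$ to keep the $v$-derivative off the low-regularity factors $r_f$, $d_f$, and the simultaneous use of \eqref{CFF} and \eqref{HCG} to control the coefficients of the linearized costate equation). One remark: your observation that the linearization must carry the final datum $f_B'[H](T)$ rather than $0$ is correct and is needed for the $C([0,T];L^2)$-estimate to close at $t=T$ (otherwise the defect at the final time is only $O(\|H\|_{\VV})$); the condition $g\vert_{t=T}=0$ printed in \eqref{GDEQ} appears to be an oversight in the statement, and your splitting $g_B^H=f_B^H+w$ is the right repair, consistent with the decomposition used in Theorem~\ref{ADJS}.
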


The proof proceeds analogously to the proof of Theorem \ref{FDCSO}.

\begin{com}
	As $K$ was arbitrary the above results hold true if $\IBB$ is replaced by~$\IBBB$. Hence they are especially true on $\BB$.\\
\end{com}

Continuous differentiability of the cost functional then follows:

\begin{cor}
	\label{TFDJ}
	\hypertarget{HTFDJ}
	The cost functional $J$ of the optimization problem \eqref{OP1} is twice Fréchet differentiable on $\IBB$. The Fréchet derivative of second order at the point $B\in\IBB$ can be described as a bilinear operator $J''(B):\VV^2\to \RR$ that is given by
	\begin{align*}
	J''(B)[H_1,H_2] &= \lambda\; \langle D_x H_1, D_x H_2 \rangle_{L^2([0,T]\times\RR^3)}\\
	&\quad - \int\limits_{[0,T]\times\RR^6} (v\times H_1)\cdot \big( \delv f_B\; g_B'[H_2] - \delv g_B\; f_B'[H_2] \big) \; \mathrm d(t,x,v)
	\end{align*}
	for all $H_1,H_2\in\IBB$. Moreover there exists some constant $C>0$ depending only on $\mathring f$, $f_d$, $T$, $K$ and $\beta$ such that for all $B,\tB\in\IBB$,
	\begin{align*}
	\| J''(B) - J''(\tB) \| \le C\, \|B-\tB\|_{\VV}^\gamma
	\end{align*}
	where
	\begin{align*}
	\| J''(B) \| = \sup\Big\{ \big|J''(B)[H_1,H_2] \big| \,\Big\vert\, \|H_1\|_{\VV} = 1 ,\, \|H_2\|_{\VV} = 1\Big\}
	\end{align*}
	denotes the operator norm. This means that $J$ is twice continuously differentiable.
\end{cor}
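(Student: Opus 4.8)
The plan is to prove that $B\mapsto J'(B)$ is Fréchet differentiable from $\IBB$ into $\VV^*$ with derivative equal to the claimed bilinear form $J''(B)$, and then to deduce the Hölder continuity of $J''$ directly from the estimates already at hand. I would start from the first derivative in the form provided by \eqref{EQFDJ} together with \eqref{DBLAG},
\[
J'(B)[H_1] = \lambda\langle D_x B, D_x H_1\rangle_{L^2} - \int\limits_{[0,T]\times\RR^6}(v\times H_1)\cdot\delv f_B\; g_B\dtxv,
\]
and form the increment $J'(B+H_2)[H_1]-J'(B)[H_1]$. The term $\lambda\langle D_x B, D_x H_1\rangle$ is linear in $B$, so its increment is exactly $\lambda\langle D_x H_2, D_x H_1\rangle$, which matches the first summand of $J''(B)[H_1,H_2]$; the entire difficulty is therefore concentrated in the bilinear integral.

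The step I expect to be decisive is an integration by parts in $v$ performed \emph{before} any differentiability is invoked. Since $H_1=H_1(t,x)$ is independent of $v$ we have $\divergence_v(v\times H_1)=0$, whence for sufficiently regular $\phi,\psi$ with compact support,
\[
\int(v\times H_1)\cdot\delv\phi\;\psi\dtxv = -\int\phi\;(v\times H_1)\cdot\delv\psi\dtxv.
\]
Writing $\delv f_{B+H_2}\,g_{B+H_2}-\delv f_B\,g_B = \delv(f_{B+H_2}-f_B)\,g_{B+H_2} + \delv f_B\,(g_{B+H_2}-g_B)$ and applying the identity with $\phi = f_{B+H_2}-f_B$ moves the $v$-derivative off the \emph{perturbation} and onto the regular costate $g_{B+H_2}\in C([0,T];C^1_b)$. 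This is precisely what makes the proposed formula work: after the rearrangement the first-order corrections $f_B'[H_2]=f_B^{H_2}$ and $g_B'[H_2]=g_B^{H_2}$ appear \emph{undifferentiated}, so I only need their convergence in the $C([0,T];L^2)$-norm—exactly what Theorem~\ref{FDCSO} and Lemma~\ref{FDCCSO} deliver.

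With the derivatives arranged on the smooth factors, the remainder $J'(B+H_2)[H_1]-J'(B)[H_1]-J''(B)[H_1,H_2]$ splits into three types of terms: one carrying the Fréchet remainder $f_{B+H_2}-f_B-f_B'[H_2]$, one carrying $g_{B+H_2}-g_B-g_B'[H_2]$, and a cross term of the form $(v\times H_1)\cdot(\delv g_{B+H_2}-\delv g_B)\,f_B'[H_2]$. The first two are $o(\|H_2\|_\VV)$ by the Fréchet differentiability of $f.$ and $g.$; the cross term is $O(\|H_2\|_\VV^{1+\gamma})$ because $\delv g_{B+H_2}-\delv g_B = O(\|H_2\|_\VV^\gamma)$ by \eqref{HCG} while $H\mapsto f_B^{H}$ is linear, hence $\|f_B'[H_2]\|_{C([0,T];L^2)}=O(\|H_2\|_\VV)$. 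All bounds are uniform over $\|H_1\|_\VV\le 1$ since the factors $\delv f_B$, $\delv g_B$, $g_{B+H_2}$ are uniformly bounded in $C([0,T];C_b)$ with supports in a fixed ball; this identifies $J''(B)$ as the second derivative. The main obstacle is really this bookkeeping, namely recognizing that the integration by parts must be done first so that the weak ($C([0,T];L^2)$) differentiability of the state and costate suffices.

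Finally, for the Hölder continuity I would estimate $J''(B)[H_1,H_2]-J''(\tB)[H_1,H_2]$ for $\|H_1\|_\VV=\|H_2\|_\VV=1$; the $\lambda$-term drops out, and each of the two products is split additively, e.g.
\[
\delv f_B\,g_B'[H_2]-\delv f_{\tB}\,g_{\tB}'[H_2] = (\delv f_B-\delv f_{\tB})\,g_B'[H_2] + \delv f_{\tB}\,(g_B'[H_2]-g_{\tB}'[H_2]).
\]
Here $\|\delv f_B-\delv f_{\tB}\|$ and $\|\delv g_B-\delv g_{\tB}\|$ are $O(\|B-\tB\|_\VV^\gamma)$ by \eqref{HCF} and \eqref{HCG}, while $\|f_B'[H_2]-f_{\tB}'[H_2]\|$ and $\|g_B'[H_2]-g_{\tB}'[H_2]\|$ are $O(\|B-\tB\|_\VV^\gamma)$ by \eqref{CFF} and \eqref{CFG}; the remaining factors $g_B'[H_2]$, $f_{\tB}'[H_2]$, $\delv f_{\tB}$, $\delv g_B$ are uniformly bounded. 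Summing gives $\|J''(B)-J''(\tB)\|\le C\|B-\tB\|_\VV^\gamma$, so that $J\in C^2(\IBB)$; the exponent $\gamma$ is the best one can hope for, being already the worst exponent in \eqref{HCF}, \eqref{HCG}, \eqref{CFF} and \eqref{CFG}.
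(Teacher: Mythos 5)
Your proposal is correct and follows essentially the same route as the paper: difference the first derivative from Theorem \ref{NOC1}\,(a), use the decompositions $f_{B+H_2}-f_B=f_B'[H_2]+f_R[H_2]$ and $g_{B+H_2}-g_B=g_B'[H_2]+g_R[H_2]$, integrate by parts in $v$ (using $\divergence_v(v\times H_1)=0$) to land the derivative on the regular factor, absorb the Fréchet remainders and the quadratic cross term into an $o(\|H_2\|_\VV)$ error, and derive the Hölder bound from \eqref{HCF}, \eqref{HCG}, \eqref{CFF}, \eqref{CFG} exactly as in the paper's estimate \eqref{ESD2J}. The only (cosmetic) difference is the bookkeeping of the cross term: the paper writes it as $(\delv f_{B+H_2}-\delv f_B)(g_{B+H_2}-g_B)$ while you obtain $(\delv g_{B+H_2}-\delv g_B)\,f_B'[H_2]$, which gives the clean $O(\|H_2\|_\VV^{1+\gamma})$ bound directly.
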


%
%

\begin{proof} Theorem \ref{FDCSO} and Theorem \ref{FDCCSO} provide the decompositions\vspace{-1mm}
	\begin{align*}
	f_{B+H}-f_B = f'_B[H] + f_R[H],\qquad g_{B+H}-g_B = g'_B[H] + g_R[H]
	\end{align*}
	for $B\in\BB$, $H\in\VV$ with $B+H\in\BB$ where
	\begin{align*}
	\|f_R[H]\|_{C([0,T];L^2)} = \text{o}(\|H\|_{\VV}),\qquad  \|g_R[H]\|_{C([0,T];L^2)} = \text{o}(\|H\|_{\VV})
	\end{align*}
	if $\|H\|_{\VV}$ tends to zero. Let now $B\in\BB$ and $H_1,H_2\in\VV$ with $B+H_2\in\BB$ be arbitrary . Then, by Theorem \ref{NOC1}\,(a),
	\begin{align*}
	&J'(B+H_2)[H_1]-J'(B)[H_1] \\[0.2cm]
	&\quad = \lambda\langle D_x H_1,D_x H_2\rangle_{L^2} \\
	&\qquad - \int\limits_{[0,T]\times\RR^6} (v\times H_1)\cdot \big( \delv f_B \; g'_B[H_2] - \delv g_B \; f'_B[H_2] \big)\dtxv + \mathcal R 
	\end{align*}
	where
	\begin{align*}
	\mathcal R&:= - \hspace{-0.2cm}\int\limits_{[0,T]\times\RR^6} (v\times H_1)\cdot \big( \delv f_B \; g_R[H_2] - \delv g_B \; f_R[H_2] \big)\; \mathrm d(t,x,v)\\
	&\qquad - \hspace{-0.2cm}\int\limits_{[0,T]\times\RR^6} (v\times H_1)\cdot  (\delv f_{B+H_2}-\delv f_B ) \; (g_{B+H_2} - g_B) \; \mathrm d(t,x,v).
	\end{align*}
	Using \eqref{HCF}, \eqref{HCG}, \eqref{CFF} and \eqref{CFG} one can easily show that $$\|R\| = \sup\big\{ |R| :\|H_1\|_\VV \le 1 \big\}= o(\|H_2\|_{\VV})$$ and hence $J$ is twice Fréchet differentiable at the point $B$ and the Fréchet derivative is given by\vspace{-0.2cm}
	\begin{align*}
	J''(B)[H_1,H_2] &= \lambda\; \langle D_x H_1, D_x H_2 \rangle_{L^2([0,T]\times\RR^3)}\\
	&\quad - \int\limits_{[0,T]\times\RR^6} (v\times H_1)\cdot \big( \delv f_B\; g_B'[H_2] - \delv g_B\; f_B'[H_2] \big) \; \mathrm d(t,x,v)
	\end{align*}
	for all $H_1,H_2\in\VV$. To prove continuity let $B,\tB\in\BB$ and $H_1,H_2\in\VV$ be arbitrary and suppose that $\|H_i\|_\VV\le 1$ for $i=1,2$. Then
	\begin{align}
	\label{ESD2J}
	&|J''(B)[H_1,H_2] - J''(\tB)[H_1,H_2]| \notag\\
	&\quad = \Bigg|\; \int\limits_0^T \int (v\times H_1)\cdot \Big( \delv f_B\; g_B'[H_2] - \delv f_\tB\; g_\tB'[H_2] \notag \\[-4mm]
	&\hspace{4cm}- \delv g_B\; f_B'[H_2] +  \delv g_\tB\; f_\tB'[H_2] \Big)\dz\mathrm dt \;\Bigg|  \notag\\
	&\quad \le C\; \int\limits_0^T \|H_1(t)\|_{\infty} \; \Big[ \|\delv f_B(t) - \delv f_\tB(t)\|_{L^2} \; \|g_\tB'[H_2](t)\|_{L^2} \notag\\[-4mm]
	&\hspace{4cm} + \|\delv f_B(t)\|_{L^2} \; \|g_B'[H_2](t)-g_\tB'[H_2](t)\|_{L^2}\notag \\
	&\hspace{4cm} + \|\delv g_B(t) - \delv g_\tB(t)\|_{L^2}\; \|f_\tB'[H_2](t)\|_{L^2}\notag \\
	&\hspace{4cm} + \|\delv g_B(t)\|_{L^2} \; \|f_B'[H_2](t) - f_\tB'[H_2](t)\|_{L^2} \Big] \dt\notag\\
	&\quad \le C \; \Big[ \|g_\tB'[H_2]\|_{L^2(0,T;L^2)} + \|\delv f_B\|_{C(0,T;L^2)} + \|f_\tB'[H_2]\|_{L^2(0,T;L^2)} \notag\\
	&\qquad\qquad + \|\delv g_B(t)\|_{C(0,T;L^2)}\Big] \; \|B-\tB\|_{\VV}^{\gamma}  \notag\\[0.2cm]
	&\quad \le C\,\|B-\tB\|_{\VV}^{\gamma}
	\end{align}
	where the constant $C>0$ depends only on $\mathring f$, $f_d$, $T$, $K$ and $\beta$. This directly yields continuity of the second order derivative with respect to the operator norm. 
\end{proof}

The following theorem provides a sufficient condition for local optimality:

\begin{thm}
	\label{QGC1}
	\hypertarget{HQGC1}
	Suppose that $\B* \in\BB$ and let $f_\B*$ and $g_\B*$ be its induced state and costate. Let $0<\alpha<2+\gamma$ be any real number. We assume that the variation inequality
	\begin{align}
	\label{VIQ}
	\int\limits_{[0,T]\times\RR^3} \hspace{-0.2cm}\left( -\lambda \laplace_x \B* + \int\limits_{\RR^3} v\times \delv f_\B* \; g_\B* \dv \right)\cdot (B-\B*) \; \mathrm d(t,x) =\; J'(\B*)[B-\B*]\; \ge 0
	\end{align}
	holds for all ${B\in\BB}$ and that there exists some constant $\eps>0$ such that 
	\begin{displaymath}
	\refstepcounter{equation}
	\begin{aligned}
	\label{CIQ}
	&\lambda\; \|D_x H\|_{L^2([0,T]\times\RR^3)}^2
	- \hspace{-0.2cm}\int\limits_{[0,T]\times\RR^6} (v\times H)\cdot \big( \delv f_\B*\; g_\B*'[H] - \delv g_\B*\; f_\B*'[H] \big) \; \mathrm d(t,x,v)\\[0.3cm]
	&\quad = \; J''(\B*)[H,H] \;  \ge \eps \; \|H\|_{\VV}^\alpha \hspace{195pt} \textnormal{(\theequation)}
	\end{aligned}
	\end{displaymath}
	holds for all $H\in\BB$. Then $J$ satisfies the following growth condition: There exists $\delta>0$ such that for all $B\in\BB$ with $\|B-\B*\|_{\VV}<\delta$,
	\begin{align}
	\label{QGC}
	J(B) \ge J(\B*) + \frac \eps 4 \|B-\B*\|_{\VV}^\alpha
	\end{align}
	and hence $\B*$ is a strict local minimizer of $J$ on the set $\BB$.
\end{thm}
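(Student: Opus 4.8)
The plan is to carry out a second-order Taylor expansion of $J$ along the segment joining $\B*$ to $B$, and to exploit the three hypotheses in exactly this order: the $C^2$-regularity of $J$ (to justify the expansion), the first-order variation inequality \eqref{VIQ} (to discard the linear term), and the coercivity \eqref{CIQ} (to produce the positive growth), with the Hölder continuity of $J''$ supplied by Corollary~\ref{TFDJ} controlling the remainder.

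First I would fix $B\in\BB$, set $H:=B-\B*$, and observe that, since $\BB$ is convex, the whole segment $\{\B*+sH\mid s\in[0,1]\}$ lies in $\BB$. By the comment following Corollary~\ref{TFDJ}, $J$ is twice continuously Fréchet differentiable on $\IBBB\supset\BB$, so the scalar map $\phi(s):=J(\B*+sH)$ is of class $C^2$ on $[0,1]$ with $\phi'(s)=J'(\B*+sH)[H]$ and $\phi''(s)=J''(\B*+sH)[H,H]$. Taylor's formula with integral remainder then gives
\begin{align*}
J(B)=J(\B*)+J'(\B*)[H]+\int\limits_0^1(1-s)\,J''(\B*+sH)[H,H]\ds.
\end{align*}
The linear term is nonnegative by the variation inequality \eqref{VIQ} applied with $B=\B*+H$, so it may simply be dropped from the lower bound.

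Next I would treat the quadratic term by splitting
\begin{align*}
J''(\B*+sH)[H,H]=J''(\B*)[H,H]+\big(J''(\B*+sH)-J''(\B*)\big)[H,H].
\end{align*}
Provided we keep $\delta\le K$ (so that $\|H\|_\VV<\delta$ forces $H\in\BB$), the coercivity \eqref{CIQ} bounds the first summand below by $\eps\,\|H\|_\VV^\alpha$, while the Hölder estimate of Corollary~\ref{TFDJ} bounds the second in absolute value by $\|J''(\B*+sH)-J''(\B*)\|\,\|H\|_\VV^2\le C\,\|sH\|_\VV^\gamma\,\|H\|_\VV^2=Cs^\gamma\|H\|_\VV^{2+\gamma}$. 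Integrating against the weight $(1-s)$, using $\int_0^1(1-s)\ds=\tfrac12$ and the finiteness of $\int_0^1(1-s)s^\gamma\ds$, I obtain
\begin{align*}
J(B)-J(\B*)&\ge \frac{\eps}{2}\,\|H\|_\VV^\alpha - C'\,\|H\|_\VV^{2+\gamma}\\
&= \|H\|_\VV^\alpha\Big(\frac{\eps}{2} - C'\,\|H\|_\VV^{2+\gamma-\alpha}\Big)
\end{align*}
for some constant $C'>0$ depending only on the data.

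Finally, the hypothesis $\alpha<2+\gamma$ makes the exponent $2+\gamma-\alpha$ strictly positive, so the bracketed factor converges to $\tfrac{\eps}{2}$ as $\|H\|_\VV\to0$; choosing $\delta\le K$ small enough that $C'\,\delta^{2+\gamma-\alpha}\le\tfrac{\eps}{4}$ yields the growth estimate $J(B)\ge J(\B*)+\tfrac{\eps}{4}\|B-\B*\|_\VV^\alpha$ for all $B\in\BB$ with $\|B-\B*\|_\VV<\delta$, whence $\B*$ is a strict local minimizer. I expect the main obstacle to be regularity bookkeeping rather than algebra: one must be certain that $J$ is genuinely $C^2$ along the entire segment even when $\B*\in\partial\BB$, which is precisely why the extension of Corollary~\ref{TFDJ} from $\IBB$ to $\IBBB$ must be invoked, and one must maintain $\delta\le K$ throughout so that the admissibility $H\in\BB$ required by \eqref{CIQ} is preserved.
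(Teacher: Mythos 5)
Your proposal is correct and follows essentially the same route as the paper: a second-order Taylor expansion of $s\mapsto J(\B*+s(B-\B*))$, discarding the linear term via \eqref{VIQ}, bounding the quadratic term below via \eqref{CIQ}, and absorbing the remainder using the Hölder continuity of $J''$ from Corollary~\ref{TFDJ} together with $\alpha<2+\gamma$. The only (immaterial) differences are that you use the integral form of the remainder where the paper uses the Lagrange form, and that you are somewhat more explicit than the paper about keeping $\delta\le K$ and about extending the $C^2$-regularity to all of $\BB$ via $\IBBB$.
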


\begin{proof} Let $B\in\BB$ be arbitrary. We define the auxillary function \linebreak $F:[0,1]\to\RR_0^+$, ${s \mapsto J\big(\B*+s(B-\B*)\big)}$. Then $F$ is twice continuously differentiable by chain rule and Taylor expansion yields $F(1) = F(0) + F'(0) + \tfrac 1 2 F''(\vartheta)$ for some $\vartheta\in ]0,1[$. By the definition of $F$ this implies that
	\begin{align*}
	J\big(B\big) &= J\big(\B*\big) + J'\big(\B*\big)[B-\B*] + \tfrac 1 2 J''\big(\B*+\vartheta (B-\B*)\big)[B-\B*,B-\B*] \\[2mm]
	&\ge  J\big(\B*\big) +  \tfrac 1 2 J''\big(\B*\big)[B-\B*,B-\B*] \\
	&\quad +  \tfrac 1 2 \Big( J''\big(\B*+\vartheta (B-\B*)\big) - J''\big(\B*\big) \Big)[B-\B*,B-\B*]
	\end{align*}
	Now, according to Corollary \ref{TFDJ},
	\begin{align*}
	&\Big| \Big( J''\big(\B*+\vartheta (B-\B*)\big) - J''\big(\B*\big) \Big)[B-\B*,B-\B*] \Big| \le C\, \|B-\B*\|_{\VV}^{2+\gamma}
	\end{align*}
	Suppose now that $ \|B-\B*\|_{\VV}<\delta$ for some $\delta>0$. Then
	\begin{align*}
	&\Big| \Big( J''\big(\B*+\vartheta (B-\B*)\big) - J''\big(\B*\big) \Big)[B-\B*,B-\B*] \Big| \\
	&\quad \le \; C\, \delta^{2+\gamma-\alpha} \|B-\B*\|_{\VV}^\alpha \;\le\; \frac{\eps}{2} \|B-\B*\|_{\VV}^\alpha
	\end{align*}
	if $\delta$ is sufficiently small. In this case $J\big(B\big) \ge J\big(\B*\big) + (\eps/4) \|B-\B*\|_{\VV}^\alpha$. This especially means that $J(B)>J(\B*)$ for all $B\in B_\delta (\B*) \cap \BB$ and consequently $\B*$ is a strict local minimizer of $J$. \end{proof}

\subsection{Uniqueness of the optimal solution on small time intervals}

We know from Corollary \ref{OSY1} that for any locally optimal solution $\B*\in\IBB$ the triple $(f_\B*,g_\B*,\B*)$ is a classical solution of the optimality system
\begin{align}
\label{OS1}
\left\{
\begin{aligned}
&\delt f +v\cdot \delx f - \delx\psi_f\cdot\delv f + (v\times B)\cdot\delv f = 0, &&\hspace{-2cm} f\big\vert_{t=0} = \mathring f\\[0.15cm]
&\delt g +v\cdot \delx g - \delx\psi_f\cdot\delv g + (v\times B)\cdot\delv g = \Phi_{f,g}, &&\hspace{-2cm} g\big\vert_{t=T} = f(T)-f_d\\[0.15cm]
&B(t,x) = -\frac 1 {4\pi\lambda} \iint \frac{1}{|x-y|}\; w\times \delv f(t,y,w)\; g(t,y,w) \dyw \;.\\[0.15cm]
\end{aligned}
\right.
\end{align}
The following theorem states that the solution of this system of equations is unique if the final time $T$ is small compared to $\lambda$. As we will have to adjust $\frac T \lambda$ it is necessary to assume that $0<\lambda\le \lambda_0$ for some constant $\lambda_0>0$. Of course large regularaization parameters $\lambda$ do not make sense in our model, so we will just assume that $\lambda_0 = 1$.

\begin{thm}
	\label{UOS}
	\hypertarget{HUOS}
	Suppose that $\lambda\in ]0,1]$ and suppose that there exists a classical solution $(f,g,B)$ of the optimality system \eqref{OS1}, i.e., $B\in C\big([0,T];C^1_b(\RR^3;\RR^3)\big)$ and  $f,g\in C^1([0,T]\times\RR^6)$ with $\supp f(t),\,\supp g(t) \subset B_r(0)$ for some radius $r>0$. Then this solution is unique if the quotient $\tfrac T \lambda$ is sufficiently small.
\end{thm}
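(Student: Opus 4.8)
The plan is to argue by direct energy estimation. Assume $(f_1,g_1,B_1)$ and $(f_2,g_2,B_2)$ are two classical solutions of \eqref{OS1} with the stated regularity and a common support ball $\Br$, set $\tf:=f_1-f_2$, $\tg:=g_1-g_2$, $\tB:=B_1-B_2$, and show $\tf=\tg=\tB=0$ once $T/\lambda$ is small. All six functions enjoy uniform bounds in $C([0,T];C^2_b)$ resp. $C([0,T];C^1_b)$ (for $f_i,g_i$ via \cite[Thm.\,7]{knopf} and the decomposition from the proof of Theorem~\ref{ADJS}, for $B_i$ via Theorem~\ref{NOC1}), so every coefficient below is controlled by a constant $C$ depending only on $\mathring f,f_d,T,K,\beta$. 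I would work throughout in $L^2(\RR^6)$ for $\tf,\tg$ and their first phase-space derivatives $\delz\tf,\delz\tg$, exploiting that all three nonlinearities are of Newtonian-potential type and that the operators $\delx\psi_\scdot$, $D_x^2\psi_\scdot$, $\delx(-\laplace)^{-1}$ and $D_x^2(-\laplace)^{-1}$ are bounded on $L^2$; this avoids ever estimating a second derivative of $\tf$ or $\tg$ in $L^\infty$. The decisive bookkeeping point, and the reason $\lambda\le1$ is assumed, is that $\tB$ carries exactly one factor $\tfrac1\lambda$ whereas each Gronwall integration contributes a factor $T$, and $\lambda\le1$ forces $T=(T/\lambda)\lambda\le T/\lambda$ to be small too, so all constants $\ee^{CT}$ stay bounded.

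From the third line of \eqref{OS1}, $\tB(t)$ is $\tfrac1\lambda$ times the Newtonian potential of $\tilde\sigma(t,y):=\int w\times(\delv\tf\,g_1+\delv f_2\,\tg)(t,y,w)\,\mathrm{d}w$; since $\|\tilde\sigma(t)\|_{L^2}\le C(\|\delz\tf(t)\|_{L^2}+\|\tg(t)\|_{L^2})$ and $\delx(-\laplace)^{-1}$ is $L^2$-bounded,
\[
\|\tB(t)\|_{L^2}+\|\delx\tB(t)\|_{L^2}\le \frac C\lambda\big(\|\delz\tf(t)\|_{L^2}+\|\tg(t)\|_{L^2}\big).
\]
Subtracting the two Vlasov equations and expanding the nonlinearities (as in the proof of Theorem~\ref{FDCSO}) shows that $\tf$ solves a linear Vlasov equation forced by $\delx\psi_\tf\scdot\delv f_2$ and $(v\times\tB)\scdot\delv f_2$. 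Testing this equation, and its $\delzi$-differentiated form, against $\tf$ resp.\ $\delzi\tf$ and integrating, the transport part $v\scdot\delx-\delx\psi_{f_1}\scdot\delv+(v\times B_1)\scdot\delv$ drops out because it is divergence-free in $(x,v)$; the commutators and the self-terms $\delx\psi_\tf$, $D_x^2\psi_\tf$ are bounded in $L^2$ by $\|\tf\|_{L^2}+\|\delz\tf\|_{L^2}$, and the magnetic forcing by $\|\tB\|_{L^2}+\|\delx\tB\|_{L^2}$. With $\tf(0)=0$ and $\tB$ treated strictly as an inhomogeneity, Gronwall gives $\sup_t(\|\tf\|_{L^2}+\|\delz\tf\|_{L^2})\le C\ee^{CT}\int_0^T(\|\tB\|_{L^2}+\|\delx\tB\|_{L^2})\ds$.

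For the costate the same energy computation applies, run backwards from $t=T$. The only genuinely new difficulty is the term $\Phi_{\tf,g_1}$ arising from $\Phi_{f_1,g_1}-\Phi_{f_2,g_2}=\Phi_{f_2,\tg}+\Phi_{\tf,g_1}$: here the difference occupies the potential-generating slot, so the direct bound \eqref{ESTPHI1} would cost $\|\delv\tf\|_{L^\infty}$, which the $L^2$ scheme cannot supply. I would remove this obstacle by integrating by parts in $v$, i.e.\ using $\Phi_{\tf,g_1}=-\Phi_{g_1,\tf}$ (legitimate by compact support), after which \eqref{ESTPHI1} bounds it by $C\|\delv g_1\|_\infty\|\tf\|_{L^2}\le C\|\tf\|_{L^2}$; the remaining forcing terms $\Phi_{f_2,\tg}$, $\delx\psi_\tf\scdot\delv g_2$ and $(v\times\tB)\scdot\delv g_2$ are handled as before, and the differentiated versions are controlled by \eqref{ESTPHI2}. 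Writing $n:=\|\tf\|_{L^2}+\|\delz\tf\|_{L^2}$ and $m:=\|\tg\|_{L^2}+\|\delz\tg\|_{L^2}$, the final datum gives $\tg(T)=\tf(T)$, hence $m(T)\le\sup_t n(t)$, and backward Gronwall yields $\sup_t m\le C\ee^{CT}\big(\sup_t n+\int_0^T(\|\tB\|_{L^2}+\|\delx\tB\|_{L^2})\ds\big)$.

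Finally I would close the loop. Setting $\nu:=\sup_t n(t)$, $\mu:=\sup_t m(t)$ and inserting the $\tB$-bound, the two Gronwall estimates become $\nu\le\theta(\nu+\mu)$ and $\mu\le K_0\nu+\theta(\nu+\mu)$, where $K_0=C\ee^{CT}$ is bounded and $\theta=C\ee^{CT}T/\lambda\to0$ as $T/\lambda\to0$ (here $\lambda\le1$ is used to keep $\ee^{CT}$ bounded). Eliminating $\mu$ gives $\nu\big(1-\theta\tfrac{1+K_0}{1-\theta}\big)\le0$, so for $T/\lambda$ small enough the prefactor is positive, forcing $\nu=0$; then $\mu=0$ and hence $\tB=0$, which is the claimed uniqueness. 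The main obstacle is precisely the $\Phi_{\tf,g_1}$ term above, handled by the $v$-integration by parts, together with the careful accounting that keeps every factor $1/\lambda$ paired with a factor $T$ rather than letting it enter a Gronwall exponent.
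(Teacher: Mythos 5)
Your overall architecture is the same as the paper's: bound $\tB$ by $\tfrac C\lambda$ times the difference of the states, derive an estimate of $f_1-f_2$ in terms of $g_1-g_2$ and vice versa, each carrying a factor $\tfrac T\lambda$, and close by a smallness argument; and your key trick of writing $\Phi_{\tf,g_1}=-\Phi_{g_1,\tf}$ by integration by parts in $v$ is exactly what the paper's bound on $\Phi_{f,g}-\Phi_{\tf,\tg}$ uses implicitly. The paper, however, runs the whole argument in $L^\infty$ along characteristics (comparing the flows $Z$ and $\tZ$ and using the implicit representation \eqref{EXPLVL}), which requires only the $C^1$ regularity actually assumed in the statement.

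Your $L^2$ scheme has a genuine gap precisely where it departs from this. Because you keep $\delv\tf$ in the source of $\tB$, you are forced to estimate $\|\delz\tf\|_{L^2}$ and $\|\delz\tg\|_{L^2}$, and differentiating the forcing terms $\delx\psi_\tf\cdot\delv f_2$ and $\delx\psi_\tf\cdot\delv g_2$ produces $\delx\psi_\tf\cdot\delzi\delv f_2$ and $\delx\psi_\tf\cdot\delzi\delv g_2$, i.e.\ second phase-space derivatives of the background solutions. The hypotheses give only $f,g\in C^1([0,T]\times\RR^6)$; your appeal to \cite[Thm.\,7]{knopf}, Theorem \ref{NOC1} and Theorem \ref{ADJS} presupposes that $B_i$ lies in $\BB$ (or is a locally optimal control), which is not part of the definition of a classical solution of \eqref{OS1}, and even for genuine costates Theorem \ref{ADJS} yields only $g_B\in L^\infty(0,T;H^2)$, never $C([0,T];C^2_b)$. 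With $D^2_zg_2$ only in $L^2$ the term $\delx\psi_\tf\cdot\delzi\delv g_2$ cannot be closed in your framework, since $\|\delx\psi_\tf\|_{L^\infty}$ is not controlled by $\|\tf\|_{L^2}$ (the kernel $|x-y|^{-2}$ is not square integrable near the singularity). The repair is cheap and is what the paper does in \eqref{UNIEU}: integrate by parts in $w$ in the third line of \eqref{OS1} as well, using $\int w\times\delv\tf\,g_1\,\mathrm dw=-\int w\times\delv g_1\,\tf\,\mathrm dw$, so that $\|\tB(t)\|_\infty\le\tfrac C\lambda(\|\tf(t)\|+\|\tg(t)\|)$ involves no derivative of the difference; then the entire first-derivative layer of your bookkeeping is unnecessary and a zeroth-order Gronwall loop closes. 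You should also let your constants depend on the norms of \emph{both} solutions (as the paper explicitly does), rather than "only on $\mathring f,f_d,T,K,\beta$", since nothing in the statement bounds an arbitrary classical solution of \eqref{OS1} a priori.
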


\begin{proof}
	Suppose that the triple $(\tf,\tg,\tB)$ is another classical solution that is satisfying the support condition with radius $\tilde r$. Without loss of generality we assume that $r=\tilde r$. Let $C=C(T)\ge 0$ denote some generic constant that may depend on $T$, $\mathring f$, $f_d$, $r$, $\|\chi\|_{C^1_b}$ and the $C([0,T];C^1_b)$-norm of $f$, $\tf$, $g$ and $\tg$. We can assume that $C=C(T)$ is monotonically increasing in $T$. First of all, by integration by parts,
	\begin{align}
	\label{UNIEU}
	&\|B(t)-\tB(t)\|_\infty \le  \frac C \lambda \|g(t)-\tg(t)\|_\infty +  \frac C \lambda \|f(t)-\tf(t)\|_\infty, \quad t\in [0,T].
	\end{align}
	Let now $Z$ and $\tZ$ denote the solutions of the characteristic system of the Vlasov equation to the fields $B$ and $\tB$ satisfying $Z(t,t,z) = z$ and $\tZ(t,t,z)=z$ for any $t\in[0,T]$ and $z\in\RR^6$. Then for any $s,t\in [0,T]$ (where $s\le t$ without loss of generality) and $z\in\RR^6$,
	\begin{align*}
	&|Z(s,t,z)-\tZ(s,t,z)| \\
	&\quad \le \int\limits_s^t C\; |Z(\tau,t,z)-\tZ(\tau,t,z)| + C\; \|\delx\psi_{f-\tf}(\tau)\|_\infty + C\; \|B(\tau)-\tB(\tau)\|_\infty \dtau\\
	&\quad \le \int\limits_s^t C\; |Z(\tau,t,z)-\tZ(\tau,t,z)| + \tfrac C \lambda\; \|f(\tau)-\tf(\tau)\|_\infty + \tfrac C \lambda\; \|g(\tau)-\tg(\tau)\|_\infty \dtau
	\end{align*}
	and hence
	\begin{align}
	\label{EQ0}
	|Z(s,t,z)-\tZ(s,t,z)| \le C\; \int\limits_s^t \tfrac 1 \lambda\; \|f(\tau)-\tf(\tau)\|_\infty + \tfrac 1 \lambda\; \|g(\tau)-\tg(\tau)\|_\infty \dtau
	\end{align}
	by Gronwall's lemma. Consequently
	\begin{align*}
	\|f(t)-\tf(t)\|_\infty &\le C\; \|Z(0,t,\cdot)-\tZ(0,t,\cdot)\|_\infty \\
	&\le C\; \int\limits_0^t \tfrac 1 \lambda\; \|f(\tau)-\tf(\tau)\|_\infty + \tfrac 1 \lambda\; \|g(\tau)-\tg(\tau)\|_\infty \dtau
	\end{align*}
	which yields
	\begin{align*}
	\|f(t)-\tf(t)\|_\infty \le C\; \tfrac 1 \lambda \exp\left( C\;\tfrac T \lambda \right)\; \int\limits_0^t \|g(\tau)-\tg(\tau)\|_\infty \dtau
	\end{align*}
	and thus
	\begin{align}
	\label{UNIEF}
	\|f-\tf\|_{C([0,T];C_b)} \le C\; \tfrac T \lambda \exp\left( C\;\tfrac T \lambda \right)\; \|g-\tg\|_{C([0,T];C_b)}\,.
	\end{align}
	For $z\in B_r(0)$ and $t\in[0,T]$ we can conclude from \eqref{EXPLVL} that
	\begin{align*}
	&|g(t,z)-\tg(t,z)| \\[0.25cm]
	&\quad\le |\big(f(T)-f_d\big)(Z(T,t,z)) - \big(\tf(T)-f_d\big)(\tZ(T,t,z))|\\
	&\qquad + \int\limits_{t}^T |[\Phi_{f,g}\chi](\tau,Z(\tau,t,z)) - [\Phi_{\tf,\tg}\chi](\tau,\tilde Z(\tau,t,z))| \dtau \\
	&\quad\le C\; \|Z(T,t,\cdot)-\tZ(T,t,\cdot)\|_\infty + \int\limits_{t}^T \|\Phi_{f,g}(\tau) - \Phi_{\tf,\tg}(\tau)\|_{L^\infty(B_r(0))} \dtau\\
	&\qquad + C\;\int\limits_{t}^T \|\Phi_{\tf,\tg}(\tau)\|_{W^{1,\infty}} \|Z(\tau,t,\cdot) - \tilde Z(\tau,t,\cdot)\|_\infty \dtau\;.
	\end{align*}
	We already know from inequality \eqref{EQ0} that for $t\le\tau\le T$,
	\begin{align*}
	\|Z(\tau,t,\cdot) - \tilde Z(\tau,t,\cdot)\|_\infty  &\le C\; \int\limits_t^\tau \tfrac 1 \lambda\; \|f(\sigma)-\tf(\sigma)\|_\infty + \tfrac 1 \lambda\; \|g(\sigma)-\tg(\sigma)\|_\infty \;\mathrm d \sigma\,.
	\end{align*}
	Also recall that
	\begin{align*}
	&\|\Phi_{f,g}(\tau)\|_{W^{1,\infty}} \le \|\Phi_{f,g}(\tau)\|_\infty + \|\Phi'_{f,g}(\tau)\|_\infty \\
	&\quad \le C\; \|f\|_{C([0,T];C^1_b)}\, \|g\|_{C([0,T];C^1_b)} \le C
	\end{align*}
	for every $\tau\in[0,T]$. Moreover, by \eqref{ESTPHI3},
	\begin{align*}
	&\|\Phi_{f,g}(\tau)-\Phi_{\tf,\tg}(\tau)\|_{L^\infty(B_r(0))}\\ 
	&\quad \le C\,\|\delz \tg\|_\infty\, \|f(\tau)-\tf(\tau)\|_\infty + C\,\|\delz f\|_\infty\, \|g(\tau)-\tg(\tau)\|_\infty\\
	&\quad \le C\; \|f(\tau)-\tf(\tau)\|_\infty + C\; \|g(\tau)-\tg(\tau)\|_\infty
	\end{align*}
	for all $\tau\in[0,T]$. This implies that for all $t\in[0,T]$,
	\begin{align*}
	\|g(t)-\tg(t)\|_\infty
	&\le C\; \int\limits_t^T \tfrac 1 \lambda \; \|g(\tau)-\tg(\tau)\|_\infty + \tfrac 1 \lambda  \; \|f(\tau)-\tf(\tau)\|_\infty \dtau
	\end{align*}
	and hence
	\begin{align}
	\label{UNIEG}
	\|g-\tg\|_{C([0,T];C_b)} \le C\; \tfrac T \lambda \exp\left( C\;\tfrac T \lambda \right)\; \|f-\tf\|_{C([0,T];C_b)}
	\end{align}
	by Gronwall's lemma. Inserting \eqref{UNIEG} in \eqref{UNIEF} yields
	\begin{align*}
	\|f-\tf\|_{C([0,T];C_b)} \le C \left(\tfrac T \lambda\right)^2 \exp\left( C\;\tfrac T \lambda \right)\; \|f-\tf\|_{C([0,T];C_b)}\;.
	\end{align*}
	If now $\tfrac T \lambda$ is sufficiently small we have $C \left(\tfrac T \lambda\right)^2 \exp\left( C\;\tfrac T \lambda \right)<1$ and we can conclude that $f=\tf$ on $[0,T]\times\RR^6$. Then obviously $g=\tg$ by \eqref{UNIEG} and $B=\tB$ by \eqref{UNIEU} which means uniqueness of the solution $(f,g,B)$. \end{proof}

\bigskip

If $\B*\in\IBB$ is a locally optimal solution, the following uniqueness result holds:

\begin{cor}
	Suppose that $\lambda\in]0,1]$ and let $\B*\in\IBB$ be a locally optimal solution of the optimization problem \eqref{OP1}. Then the tripel $(f_{\B*},g_{\B*},\B*)$ is a classical solution of the optimality system \eqref{OS1} according to Corollary \ref{OSY1}.\pskip
	
	If now $\lambda\in]0,1]$ and $\tfrac T \lambda$ is sufficiently small then $\B*$ is the only locally optimal solution of the optimization problem \eqref{OP1} in $\IBB$. \pskip
	
	Suppose that there is a globally optimal solution $B\in\IBB$. Then $B=\B*$ is the unique globally optimal solution in $\IBB$. However it is still possible that there are other globally optimal solutions in $\partial\BB$.
\end{cor}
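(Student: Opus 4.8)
The plan is to obtain the three assertions as formal consequences of the two immediately preceding results, namely Corollary~\ref{OSY1} (which converts an interior local minimizer into a classical solution of the optimality system~\eqref{OS1}) and Theorem~\ref{UOS} (which gives uniqueness of such a classical solution once $T/\lambda$ is small). The first assertion requires no new argument at all: it is precisely the statement of Corollary~\ref{OSY1} applied to the interior minimizer $\B*$, so $(f_{\B*},g_{\B*},\B*)$ solves~\eqref{OS1} with the stated support properties.

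For the second assertion I would compare two arbitrary interior local minimizers $\B*_1,\B*_2\in\IBB$. Each of them, by Corollary~\ref{OSY1}, induces a classical triple $(f_{\B*_i},g_{\B*_i},\B*_i)$ solving~\eqref{OS1}. The decisive point is that the support bounds $\supp f_{\B*_i}(t)\subset\BR$ and $\supp g_{\B*_i}(t)\subset B_{R^*}(0)$ hold with radii $R,R^*$ that depend only on $\mathring f,f_d,T,K,\beta$ and \emph{not} on the individual control; hence both triples satisfy the hypothesis of Theorem~\ref{UOS} with one common radius $r:=\max\{R,R^*\}$. Since $\lambda\in]0,1]$ and $T/\lambda$ is assumed small, Theorem~\ref{UOS} forces the two classical solutions to coincide, so in particular $\B*_1=\B*_2$. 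Consequently there is at most one interior local minimizer, which must be $\B*$.

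The third assertion is then immediate, since a globally optimal control is in particular a locally optimal one. Thus if $B\in\IBB$ is globally optimal it is locally optimal, and by the second assertion $B=\B*$; hence $\B*$ is the unique globally optimal solution inside $\IBB$. The argument genuinely cannot reach the boundary, because the representation of $B$ in the third line of~\eqref{OS1} originates from the \emph{equality} $J'(\B*)[H]=0$ of Theorem~\ref{NOC1}\,(c), which is only valid at interior points; on $\partial\BB$ one merely has the variational inequality $J'(\B*)[H]\ge0$, so a boundary minimizer need not solve~\eqref{OS1} and escapes the uniqueness of Theorem~\ref{UOS}.

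The only step needing genuine care, and hence the main obstacle, is the uniformity underlying the second assertion. The smallness threshold for $T/\lambda$ in Theorem~\ref{UOS} is dictated by a constant $C$ depending on the $C([0,T];C^1_b)$-norms of the competing states and costates, so a priori it might differ from one minimizer to another. To secure a single threshold valid for all interior minimizers simultaneously, I would invoke the control-uniform bounds on $\|f_B\|_{C([0,T];C^1_b)}$ coming from~\eqref{HCF} together with the $p$-norm preservation, and the corresponding bound on $\|g_B\|_{C([0,T];C^1_b)}$ from Theorem~\ref{ADJS}, both with constants depending only on the data. This renders $C$, and with it the admissible range of $T/\lambda$, independent of the particular control, so that the uniqueness conclusion holds uniformly over $\IBB$.
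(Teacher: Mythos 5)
Your proposal is correct and follows essentially the same route as the paper: apply Corollary~\ref{OSY1} to turn each interior local minimizer into a classical solution of \eqref{OS1}, invoke Theorem~\ref{UOS} for uniqueness when $T/\lambda$ is small, and observe that global optimality implies local optimality. Your explicit verification that the smallness threshold for $T/\lambda$ is uniform over $\IBB$ (via the control-independent bounds on $\|f_B\|_{C([0,T];C^1_b)}$, $\|g_B\|_{C([0,T];C^1_b)}$ and the support radii) is a point the paper's two-sentence proof silently takes for granted, and is correctly handled.
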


\begin{proof} If $\lambda\in]0,1]$ and $\tfrac T \lambda$ is sufficiently small then Proposition \ref{UOS} ensures that $\B*$ is the only locally optimal solution. Recall that there exists at least one globally optimal solution according to \cite[Thm.\,16]{knopf}. Let us assume that $B\in\IBB$ in one of these globally optimal solutions. As any globally optimal solution is also locally optimal it follows that there is only one globally optimal solution in $\IBB$ and thus $B=\B*$. \end{proof}

\bigskip
\footnotesize

\end{document}